\newcommand{\apref}[3]{\hyperref[#2]{#1\ref*{#2}#3}}
\theoremstyle{plain}
\newtheorem{prop}{Proposition}[section]
\newtheorem{lemma}[prop]{Lemma}
\newtheorem{thm}[prop]{Theorem}
\theoremstyle{definition}
\newtheorem{conj}[prop]{Conjecture}
\theoremstyle{remark}
\newtheorem{remark}[prop]{Remark}
\DeclareMathOperator{\SL}{SL}
\DeclareMathOperator{\PSL}{PSL}
\DeclareMathOperator{\PGL}{PGL}
\DeclareMathOperator{\Tr}{Tr}
\DeclareMathOperator{\Ima}{Im}
\DeclareMathOperator{\Rea}{Re}
\DeclareMathOperator{\pr}{pr}
\DeclareMathOperator{\Fix}{Fix}
\DeclareMathOperator{\CS}{CS}
\DeclareMathOperator{\NIC}{NIC}
\DeclareMathOperator{\Per}{Per}
\newcommand{\hg}{\overline \h^g}
\newcommand{\st}{\text{st}}
\newcommand{\dec}{\text{dec}}
\newcommand{\parab}{\text{par}}
\newcommand{\dyn}{\text{dyn}}
\newcommand\N{\mathbb{N}}
\newcommand\Q{\mathbb{Q}}
\newcommand\R{\mathbb{R}}
\newcommand\Z{\mathbb{Z}}
\newcommand\C{\mathbb{C}}
\newcommand{\h}{\mathbb{H}}
\newcommand{\mc}[1]{\mathcal #1}
\newcommand{\wt}{\widetilde}
\newcommand{\wh}{\widehat}
\newcommand{\eps}{\varepsilon}
\DeclareMathOperator{\FE}{FE}
\DeclareMathOperator{\id}{id}
\DeclareMathOperator{\Fct}{Fct}
\newcommand{\sceq}{\mathrel{\mathop:}=}
\newcommand{\mat}[4]{\begin{pmatrix} #1&#2\\#3&#4\end{pmatrix}}
\newcommand{\textmat}[4]{\left(\begin{smallmatrix} #1&#2 \\ #3&#4
\end{smallmatrix}\right)}
\newcommand\ie{\mbox{i.\,e., }}
\newcommand\wrt{\mbox{w.\,r.\,t.\@ }}
\begin{document}

\title[Hecke triangle groups]{Period functions for Hecke triangle groups, and the Selberg zeta function as a Fredholm determinant
}
\author{M.\@ M\"oller}
\address{Goethe-Universit\"at Frankfurt, Institut f\"ur Mathematik, D-60325 Frankfurt (Main)}
\email{moeller@math.uni-frankfurt.de}
\author{A.D.\@ Pohl}
\address{ETH Z\"urich, Departement Mathematik, R\"amistrasse 101, CH-8092 Z\"urich}
\email{anke.pohl@math.ethz.ch}
\subjclass[2010]{Primary: 37C30, 11F37; Secondary: 11M36, 37B10, 37D35, 37D40}
\keywords{Hecke triangle groups, Maass cusp forms, transfer operator, period functions, Selberg zeta function, Phillips-Sarnak conjecture}
\begin{abstract} 
We characterize Maass cusp forms for any cofinite Hecke triangle group as $1$-eigenfunctions of appropriate regularity of a transfer operator family. This transfer operator family is associated to a certain symbolic dynamics for the geodesic flow on the orbifold arising as the orbit space of the action of the Hecke triangle group on the hyperbolic plane. Moreover we show that the Selberg zeta function is the Fredholm determinant of the transfer operator family associated to an acceleration of this symbolic dynamics. 
\end{abstract}
\maketitle

\section{Introduction and Statement of Results}

Let $\Gamma$ be a cofinite Hecke triangle group and consider its action on two-dimensional real hyperbolic space $\h$ by Moebius transformations. It is well-known \cite{Hejhal_stf2, Selberg, Fischer} that the  zeros with $\Rea s = \tfrac12$ of the Selberg zeta function
\[
 Z(s) = \prod_{\wh\gamma} \prod_{k=0}^\infty \left(1-e^{-(s+k)l(\wh\gamma)} \right)
\]
correspond to the eigenvalues of the Maass cusp forms for $\Gamma$. Here the outer product is over all closed geodesics $\wh\gamma$ on the orbifold $\Gamma\backslash\h$ and $l(\wh\gamma)$ denotes the length of $\wh\gamma$. In case of the modular group $\PSL(2,\Z)$ an even closer relation is known. Series \cite{Series} provided a symbolic dynamics for the geodesic flow on $\PSL(2,\Z)\backslash \h$ which relates this flow to the Gauss map
\[
 K\colon\left\{
\begin{array}{ccl}
[0,1]\setminus\Q & \to & [0,1]\setminus\Q
\\
x & \mapsto & \left\{\tfrac{1}{x}\right\} = \tfrac{1}{x} - \left\lfloor\tfrac{1}{x}\right\rfloor
\end{array}
\right.
\]
in a way that closed geodesics on $\PSL(2,\Z)\backslash \h$ correspond to finite orbits of $K$. Mayer \cite{Mayer_zeta, Mayer_thermo, Mayer_thermoPSL} investigated the transfer operator with parameter $s\in\C$ associated to $K$. This is the evolution operator of densities on the unit interval $[0,1]$ which are absolutely continuous w.r.t.\@ Lebesgue-$s$-density under the Gauss map $K$, hence
\[
 \big(\mc L_{K,s} f\big)(x) = \int_{[0,1]} \delta(x-K(y))f(y)|dy|^s = \sum_{n\in\N} (x+n)^{-2s} f\left(\tfrac{1}{x+n}\right).
\]
Here $\delta$ denotes the Kronecker delta function. Mayer showed the existence of a Banach space $\mc B$ such that for $\Rea s > \tfrac12$, the transfer operator $\mc L_{K,s}$ acts on $\mc B$ and as such is nuclear of order $0$. Moreover, he proved that the transfer operator family has a meromorphic extension $\wt{\mc L}_{K,s}$ to the whole $s$-plane with values in nuclear operators of order $0$ on $\mc B$. Its possible poles are located at $s=(1-k)/2$, $k\in\N_0$. Finally, he showed that the Selberg zeta function is represented by the product of the Fredholm determinants of $\pm\wt{\mc L}_{K,s}$:
\[
 Z(s) = \det(1-\wt{\mc L}_{K,s})\cdot \det(1+ \wt{\mc L}_{K,s}).
\]
In turn, the zeros of $Z$ are determined by the $\pm 1$-eigenfunctions of $\wt{\mc L}_{K,s}$ in $\mc B$.  Efrat \cite{Efrat_spectral} proved that the values for $s$ such that there exist a $\pm 1$-eigenfunction of $\wt{\mc L}_{K,s}$ correspond to the eigenvalues of even resp.\@ odd Maass cusp forms. Extending this result to the level of the eigenfunctions, Chang and Mayer \cite{Chang_Mayer_transop} as well as Lewis and Zagier \cite{Lewis_Zagier} showed that even resp.\@ odd Maass cusp forms for $\PSL(2,\Z)$ are in linear isomorphism to the $\pm 1$-eigenspaces of Mayer's transfer operator (then automatically $\Rea s =\tfrac12$). Their proof does not make use of the Selberg trace formula. It proceeds in two steps. In a first step they show that Maass cusp forms are isomorphic to the period functions of $\PSL(2,\Z)$ (solutions of a certain $3$-term functional equation of appropriate regularity; see \cite{Bruggeman} for an alternative proof, and \cite{Lewis} for even Maass cusp forms). In a second step they prove that period functions and the $\pm 1$-eigenfunctions of $\wt{\mc L}_{K,s}$ are in linear bijection. 

Each eigendensity $f$ of $\mc L_{K,s}$ determines a $K$-invariant measure by $f|d\lambda|^s$ (here $\lambda$ denotes Lebesgue measure). In turn, certain $K$-invariant measures on $[0,1]$ determine the zeros of $Z$ and these $K$-invariant measures with $\Rea s = \tfrac12$ are in bijection to the Maass cusp forms for the modular group. More precisely, in this last step one has to take limits in $s$ of $K$-invariant measures or densities.

In this article we set out to establish these relations between Maass cusp forms, invariant measures and the Selberg zeta functions for the case that $\Gamma$ is any cofinite Hecke triangle group. With the exception of three groups (namely the Hecke triangle groups for $q\in\{3,4,6\}$, cf.\@ Section~\ref{prelims}), these Hecke triangle groups are non-arithmetic. However, they enjoy a certain symmetry which implies that the Weyl law holds for its odd Maass cusp forms \cite{Hejhal_eigenvalueshecke}. Thus odd Maass cusp forms are known to exist. On the other hand, for the non-arithmetic Hecke triangle groups, the Phillips-Sarnak conjecture \cite{DIPS,Phillips_Sarnak_weyl,Phillips_Sarnak_cuspforms} states that no even Maass cusp forms exist \cite{Hejhal_eigenvalueshecke}. 

This work is divided into two parts. In the first part, Section~\ref{part1}, we characterize Maass cusp forms for any Hecke triangle group $\Gamma$ as solutions of a certain finite-term functional equation. This functional equation arises as the characterizing equation for the $1$-eigenfunctions of a (finite term) transfer operator family $\mc L_{F,s}$ associated to a specific symbolic dynamics for the geodesic flow on $\Gamma\backslash \h$. As before, we can think of these solutions of the functional equation as period functions for $\Gamma$ or, alternatively, as densities of $F$-invariant measures. When specified to the modular group, the arising functional equation is exactly the one used by Lewis and Zagier. Thus, we provide here a dynamical interpretation of their functional equation.

This kind of approach is very sensitive to the symbolic dynamics used. The symbolic dynamics we employ here was developed by the second named author in \cite{Pohl_Symdyn2d} for this specific purpose. It enjoys some special features: The discretization of the geodesic flow it arises from is semi-conjugate to a discrete dynamical system $F$ on $\R^+$. The map $F$ decomposes into finitely many diffeomorphisms, each of which is given by a Moebius transformation induced by an element of $\Gamma$. This implies that the transfer operator family $\mc L_{F,s}$ (and hence the functional equation for the period functions) has only finitely many terms. Further, and most crucial, if $g_1,\ldots, g_n$ are the elements in $\Gamma$ used to define $F$, then the integral transform in \cite{Lewis_Zagier} applied to these elements gives a solution of the functional equation. More precisely, let $u$ be a Maass cusp form for $\Gamma$. Then we assign to $u$ and $t\in \R^+$ a certain closed $1$-form $\omega(u,t)$ (integration against Poisson kernel in Green's form; see Section~\ref{slowsys} for details). We define (integration along the imaginary axis) 
\[
 \psi(t) \sceq \int_0^{\infty} \omega(u,t).
\]
The special form of the elements $g_1,\ldots, g_n$ allows changing the path of integration as follows:
\[
 \psi(t) = \int_0^{\infty} \omega(u,t) = \sum_{j=1}^n \int_{g_j.0}^{g_j.\infty} \omega(u,t).
\]
Transformation properties of these integrals show immediately that $\psi$ satisfies the functional equation. Finally, the discrete dynamical system $(\R^+,F)$ has a certain involutive symmetry. Taking advantage of this symmetry, we can decompose any period function into an invariant one and an anti-invariant one. The invariant period functions are seen to be in bijection with even Maass cusp forms, the anti-invariant ones with odd Maass cusp forms. Now the Phillips-Sarnak conjecture translates to the non-existence of non-zero invariant period functions if $\Gamma$ is non-arithmetic.

In the second part, Section~\ref{part2}, we represent the Selberg zeta function as the Fredholm determinant of a transfer operator family. The transfer operator $\mc L_{F,s}$ is not nuclear because two of the elements $g_1,\ldots, g_n$ are parabolic. All the other elements are hyperbolic. To overcome this problem, we accelerate the symbolic dynamics and hence the discrete dynamical system $(\R^+,F)$ on these parabolic elements. Dynamically this is reflected in an acceleration of the discretization, which itself is a discretization. For a technical reason we conjugate this acceleration to a discrete dynamical system $(D,H)$ on a bounded domain $D$. The transfer operator family $\mc L_{H,s}$ is seen to be nuclear of order $0$ if $\Rea s > \tfrac12$.

Now another crucial feature of the discretization $(\R^+,F)$ comes into the picture: its coding sequences are unique and all closed geodesics are captured. In other words, closed $H$-orbits are in bijection with closed geodesics. The result is that the Selberg zeta function equals the Fredholm determinant of the transfer operator family $\mc L_{H,s}$:
\[
 Z(s) = \det(1-\mc L_{H,s}).
\]
The transfer operator family has a meromorphic extension $\wt{\mc L}_{H,s}$ to the whole $s$-plane with values in nuclear operators of order $0$ and possible poles at $s=(1-k)/2$, $k\in\N_0$. In turn, the Fredholm determinant of $\wt{\mc L}_{H,s}$ provides a meromorphic extension of the Selberg zeta function to $\C$ with possible poles at $s=(1-k)/2$, $k\in\N_0$, and the order of any pole is at most $4$. Conjecturally, for $\Rea s = \tfrac12$, the $1$-eigenfunctions of the  transfer operators $\mc L_{F,s}$ and $\wt{\mc L}_{H,s}$ are in bijection.

The discrete dynamical system $(D,H)$ inherits the symmetry of the system $(\R^+,F)$ which is responsible for the splitting of the period functions into odd and even ones. Taking advantage of this symmetry, the transfer operator family $\mc L_{H,s}$ decomposes into an even and an odd one, denoted  $\mc L_{H,s}^+$ resp.\@ $\mc L_{H,s}^-$. Consequently, the Selberg zeta function factors as
\[
 Z(s) = \det(1- \mc L_{H,s}^+)\det(1-\mc L_{H,s}^-).
\]
The symmetry and this factorization extends to the meromorphic extension $\wt{\mc L}_{H,s}$. Conjecturally, generalizing the established case for $\PSL(2,\Z)$, for $\Rea s = \tfrac12$, the $1$-eigenfunctions of $\wt{\mc L}_{H,s}^+$  should be in bijection with even Maass cusp forms, and the $1$-eigenfunctions of $\wt{\mc L}^-_{H,s}$ should correspond to odd Maass cusp forms. Proving this conjecture is the last step for the complete transfer operator method for Hecke triangle groups. We discuss this in more detail in Section~\ref{sec_conj}. In case of the modular group $\PSL(2,\Z)$, the transfer operators $\mc L_{H,s}^+$ and $-\mc L_{H,s}^-$ are Mayer's transfer operator (after the conjugation we used).

There are two other approaches to represent the Selberg zeta function for Hecke triangle groups as a Fredholm determinant of a transfer operator family. The first one is provided by Fried \cite{Fried_triangle}. He develops symbolic dynamics for the billiard flow on $\Gamma\backslash S\h$ where $\Gamma$ is a triangle group (hence, a subgroup of $\PGL(2,\R)$). Then he induces it to finite index subgroups via permutation representations, which includes a symbolic dynamics for the geodesic flow for Hecke triangle groups. The other one is provided by Mayer, Str\"omberg \cite{Mayer_Stroemberg} resp.\@ by Mayer, M\"uhlenbruch and Str\"omberg \cite{Mayer_Muehlenbruch_Stroemberg}. For the modular group $\PSL(2,\Z)$, Bruggeman and M\"uhlenbruch \cite{Bruggeman_Muehlenbruch} show that the $1$-eigenfunctions of the transfer operator family of this approach are in linear isomorphism to the Maass cusp forms. In \cite{Mayer_Stroemberg, Mayer_Muehlenbruch_Stroemberg}, the authors took the generating function for Rosen continued fractions as discrete dynamical system and developed a (rather involved) discretization of the geodesic flow on $\Gamma\backslash\h$. This discretization allows to link the Fredholm determinant of the arising transfer operator family to the Selberg zeta function. However, the coding sequences are not unique, whence the Fredholm determinant is not exactly the Selberg zeta function. Indeed, one has a representation of the form
\[
 Z(s) = h(s)\det(1- \mc L_s),
\]
where $h$ is a function that compensates for multiple coding. Moreover, results of this kind are shown by Pollicott \cite{Pollicott} for cocompact Fuchsian groups using the Series symbolic dynamics, and by \cite{Morita_transfer} for a wide class of cofinite Fuchsian groups (however, not containing Hecke triangle groups) using a modified Bowen-Series symbolic dynamics. Also multiple codings exist here, and hence the Selberg zeta function is not exactly the Fredholm determinant of the associated transfer operators. Finally, using representations Chang and Mayer \cite{Chang_Mayer_eigen, Chang_Mayer_extension} extended the case of the modular group to its finite index subgroups and hence represented for these groups the Selberg zeta function exactly as the Fredholm determinant of the transfer operator family induced by the representation. For these finite index subgroups Deitmar and Hilgert \cite{Deitmar_Hilgert} provided period functions for the Maass cusp forms of these groups also using representations. For the groups $\Gamma_0(N)$, the combination of \cite{Hilgert_Mayer_Movasati} and \cite{Fraczek_Mayer_Muehlenbruch} shows a close relation between eigenfunctions of certain transfer operators and Maass cusp forms.

\textit{Acknowledgments:}
The authors like to thank Roelof Bruggeman, Tobias Hartnick, Joachim Hilgert, Dieter Mayer, Martin Olbrich and Fredrik Str\"omberg for helpful comments respective for patiently answering questions about their work. Moreover, they like to thank Viviane Baladi for drawing their attention to the reference \cite{Fried_triangle}, and an anonymous referee for a thorough reading and helpful comments. The authors were partially supported by the Sonderforschungsbereich/Transregio 45 ``Periods, moduli spaces and arithmetic of algebraic varieties'', the Max-Planck-Institut f\"ur Mathematik in Bonn, and the second named author by the SNF (200021-127145).

\section{Preliminaries}\label{prelims}

\subsection{Hecke triangle groups}

Let $q\in\N$, $q\geq 3$. We consider the \textit{Hecke triangle group} $G_q$ with the presentation
\[
 \langle T,S \mid S^2 = \id = (TS)^q\rangle
\]
or, equivalently
\[
 \langle U,S \mid S^2 = \id = U^q \rangle.
\]
Let $\lambda \sceq 2\cos\tfrac{\pi}{q}$. Then $G_q$ is identified with the subgroup of $\PSL(2,\R)$ generated by
\[
 T \sceq \mat{1}{\lambda}{0}{1} \quad\text{and}\quad S\sceq \mat{0}{-1}{1}{0}
\]
or alternatively by
\[
 U \sceq TS = \mat{\lambda}{-1}{1}{0} \quad\text{and}\quad S.
\]
The group $\PSL(2,\R)$ is isomorphic to the group of orientation-preserving isometries of two-dimensional real hyperbolic space. As  model for this space we use the upper half plane
\[
 \h \sceq \{ z\in\C \mid \Ima z > 0\}
\]
endowed with the well-known hyperbolic metric. The action of $\PSL(2,\R)$ on $\h$ is given by fractional linear transformations, that is, $g=\textmat{a}{b}{c}{d} \in \PSL(2,\R)$ acts on $z\in \h$ by
\[
 g.z = \frac{az+b}{cz+d}.
\]

\subsection{Maass cusp forms}

A \textit{Maass cusp form} for the Hecke triangle group $G_q$ is a twice continuously differentiable function $u\colon \h\to \C$ which satisfies the following properties: It is an eigenfunction of the hyperbolic Laplace-Beltrami operator 
\[
\Delta = -y^2 \left( \frac{\partial^2}{\partial x^2} + \frac{\partial^2}{\partial y^2}\right)
\]
which is constant on each $G_q$-orbit, \ie for all $g\in G_q$ and all $z\in \h$ we have
\[ 
 u(g.z) = u(z).
\]
Moreover, $u$ has at most polynomial growth at infinity. Finally, for all $y>0$ it should hold that
\[
 \int_0^\lambda u(x+iy)dx = 0.
\]
The eigenvalue of a Maass cusp form is positive and factorizes as $s(1-s)$ with $0<\Rea s < 1$.

\subsection{Cohomological characterization of Maass cusp forms}

Recently, Bruggeman, Lewis and Zagier \cite{BLZ_part2} provided a characterization of the spaces of Maass cusp forms for arbitrary discrete cofinite subgroups of $\PSL(2,\R)$ as various first parabolic cohomology spaces. From their work we will use that the space of Maass cusp forms for $G_q$ with eigenvalue $s(1-s)$ is isomorphic (as a vector space) to the first parabolic cohomology space with values in the smooth and semi-analytic vectors of the principal series representation associated to the spectral parameter $s$. They showed that this isomorphism is explicitly given by a certain integral transform. In this section we expound the parts of this isomorphism which are important for the work at hand. 

We use the line model for principal series representation. For the spectral parameter $s\in\C$, the space $\mc V^{\omega^*,\infty}_s$ of smooth and semi-analytic vectors in the line model of the principal series representation associated to $s$ consists of complex-valued functions $\varphi$ on $\R$ which are both
\begin{itemize}
\item smooth with an asymptotic expansion of the form 
\begin{equation}\label{asymexp}
 \varphi(t) \sim |t|^{-2s} \sum_{n=0}^\infty c_nt^{-n} \quad\text{as $|t|\to\infty$}
\end{equation}
for some complex-valued sequence $(c_n)_{n\in\N_0}$ (which depends on $\varphi$),
\item and real-analytic on $\R\setminus E$ for a finite subset $E$ which depends on $\varphi$.
\end{itemize}

To define the action of $G_q$ on $\mc V_s^{\omega^*,\infty}$ we set
\[
 j_s(g,t)\sceq \big((ct+d)^{-2}\big)^s \quad (= |ct+d|^{-2s})
\]
for $s\in\C$, $g=\textmat{a}{b}{c}{d}\in \PSL(2,\R)$ and $t\in\R$. Moreover, for a function $f\colon V\to \R$ on some subset $V$ of $\R$ and $g\in\PSL(2,\R)$, we define 
\begin{equation}\label{tau_s}
 \tau_s(g^{-1}) f(t) \sceq j_s(g,t) f(g.t)
\end{equation}
whenever this makes sense. Then the  requirement \eqref{asymexp} is equivalent to impose that the map $\tau_s(S)\varphi$, that is
\[
 t\mapsto (t^{-2})^{s}\varphi\left(-\frac{1}{t}\right), 
\]
extends smoothly to $\R$. The action of $G_q$  on $\mc V_s^{\omega^*,\infty}$ is given by $\tau_s$ (with $V=\R$), hence
\[
 (\tau_s(g^{-1})\varphi)(t) \sceq \left((ct+d)^{-2}\right)^{s} \varphi\left(\frac{at+b}{ct+d}\right)
\]
for $g=\textmat{a}{b}{c}{d}\in G_q$, $\varphi\in\mc V_s^{\omega^*,\infty}$ and $t\in\R$. In the case that $c\not=0$ and $t=-\tfrac{d}{c}$, this definition is to be understood as a limit.

In the following we will recall the definition of the first parabolic cohomology space $H^1_\parab(G_q;\mc V^{\omega^*,\infty}_s)$ from \cite{BLZ_part2}. Parabolic cohomology spaces are certain subspaces of group cohomology. For group cohomology we use the standard model. A 1-cocycle in group cohomology of $G_q$ is then represented uniquely by a map $\alpha\colon G_q^2 \to \mc V_s^{\omega^*,\infty}$ which satisfies
\[
 \tau_s(g^{-1})\alpha(g_1,g_2) = \alpha(g_1g,g_2g) \qquad\text{and}\qquad \alpha(g_0,g_1) + \alpha(g_1,g_2) = \alpha(g_0,g_2)
\]
for all $g,g_0,g_1,g_2\in G_q$. A 1-coboundary in group cohomology is a 1-cocycle $\alpha$ in group cohomology for which there exists a map $f\colon G_q \to \mc V_s^{\omega^*,\infty}$ such that
\[
\tau_s(g^{-1})f(g_1) = f(g_1g) \qquad\text{and}\qquad  \alpha(g_0,g_1) = f(g_0) - f(g_1)
\]
for all $g,g_0,g_1\in G_q$. A \textit{parabolic 1-cocycle}  is a 1-cocycle $\alpha$ in group cohomology which, in addition, satisfies that for each parabolic element $p\in G_q$ there is a map $\psi\in \mc V_s^{\omega^*,\infty}$ such that
\[
 \alpha(p,1) = \tau_s(p^{-1})\psi -\psi.
\]
Here, $1$ denotes the unit element of $G_q$.
This property allows to normalize representatives of parabolic cocycle classes to vanish on specific elements. Parabolic 1-coboundaries coincide with the 1-coboundaries from group cohomology.

It is convenient to use the notation of restricted cocycles. Because of $G_q$-equivariance, each 1-cocycle is determined by its values on $G_q\times \{1\}$. Hence we may identify the 1-cocycle $\alpha\colon G_q^2 \to \mc V_s^{\omega^*,\infty}$  with the map $c\colon G_q \to \mc V_s^{\omega^*,\infty}$ given by
\[
 c_g \sceq c(g) \sceq \alpha(g,1)
\]
for all $g\in G_q$. Then the space of 1-cocycles of group cohomology of $G_q$ becomes
\[
 Z^1(G_q; \mc V_s^{\omega^*,\infty}) = \{ c \colon G_q \to \mc V_s^{\omega^*,\infty} \mid \forall\, g,h\in G_q \colon c_{gh} = \tau_s(h^{-1})c_g + c_h \},
\]
and that of parabolic 1-cocycles is
\[
\begin{aligned}
 Z^1_\parab(G_q; \mc V_s^{\omega^*,\infty}) = \{ c \in Z^1(G_q;\mc V_s^{\omega^*,\infty}) \mid \forall\,  & p\in G_q\ \text{parabolic}\ \exists\, \psi\in\mc V_s^{\omega^*,\infty}\colon \\
 & c_p = \tau_s(p^{-1})\psi - \psi \}.
\end{aligned}
\]
The spaces of  1-coboundaries of group cohomology and of parabolic 1-coboundaries  coincide. They are
\[
 B^1_\parab(G_q;\mc V_s^{\omega^*,\infty}) = B^1(G_q; \mc V_s^{\omega^*,\infty}) = \{ g \mapsto \tau_s(g^{-1})\psi - \psi \mid \psi \in \mc V_s^{\omega^*,\infty} \}.
\]
Finally, the \textit{parabolic 1-cohomology space} is the quotient space
\[
 H^1_\parab(G_q; \mc V_s^{\omega^*,\infty}) = Z^1_\parab(G_q; \mc V_s^{\omega^*,\infty})/ B^1_\parab(G_q;\mc V_s^{\omega^*,\infty}).
\]

\begin{thm}[\cite{BLZ_part2}]\label{BLZ_main}
The space of Maass cusp forms for $G_q$ with eigenvalue $s(1-s)$ is in linear isomorphism with $H^1_\parab(G_q;\mc V_s^{\omega^*,\infty})$.
\end{thm}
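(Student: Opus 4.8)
The plan is to produce the isomorphism explicitly, by an integral transform in the spirit of Lewis--Zagier, and then to check that it is bijective; this is essentially the content of \cite{BLZ_part2}. Throughout fix $s$ with $0<\Rea s<1$, and for $z\in\h$, $t\in\R$ set
\[
 R_s(z,t)\sceq\left(\frac{\Ima z}{|z-t|^2}\right)^{\!s},
\]
which in the variable $z$ is a $\Delta$-eigenfunction with eigenvalue $s(1-s)$ and which transforms by $R_s(g.z,g.t)=j_s(g,t)^{-1}R_s(z,t)$ for $g\in G_q$. For a Maass cusp form $u$ with eigenvalue $s(1-s)$ let $\eta_s(u;t)$ be the associated Green's form, the natural bilinear $1$-form on $\h$ built from $u$ and $R_s(\cdot,t)$; because both functions are $\Delta$-eigenfunctions for the same eigenvalue, $\eta_s(u;t)$ is closed. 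After choosing a base point $z_0\in\partial\h$ one defines
\[
 c_g(t)\sceq\int_{z_0}^{g^{-1}.z_0}\eta_s(u;t),
\]
the integral being path independent by closedness and convergent because $u$ is bounded on $\h$ and decays exponentially at the cusps.

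The first step is to verify that $c=(c_g)_{g\in G_q}$ is a parabolic $1$-cocycle with values in $\mc V_s^{\omega^*,\infty}$. The cocycle relation $c_{gh}=\tau_s(h^{-1})c_g+c_h$ follows by splitting the path from $z_0$ to $(gh)^{-1}.z_0$ at $h^{-1}.z_0$, substituting $z\mapsto h^{-1}.z$ in the second piece, and using $u\circ h=u$ and the equivariance of $R_s$ together with the automorphy of $j_s$; this is precisely the manipulation sketched in the introduction for $\int_0^\infty\omega(u,t)=\sum_j\int_{g_j.0}^{g_j.\infty}\omega(u,t)$. Membership in $\mc V_s^{\omega^*,\infty}$ is a matter of regularity bookkeeping: $t\mapsto c_g(t)$ is real-analytic off the finite set of points of $\R$ met by the integration path, it extends smoothly across them for a suitably chosen smooth path, and the expansion $R_s(z,t)=(\Ima z)^s|t|^{-2s}\big(1+O(1/t)\big)$ as $|t|\to\infty$ forces the asymptotic expansion \eqref{asymexp}, \ie $\tau_s(S)c_g$ extends smoothly to $\R$. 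Parabolicity uses cuspidality once more: for a parabolic $p\in G_q$ fixing a cusp $\xi$, the improper integral $\psi(t)\sceq\int_\xi^{z_0}\eta_s(u;t)$ converges, lies in $\mc V_s^{\omega^*,\infty}$, and the same path-splitting gives $c_p=\tau_s(p^{-1})\psi-\psi$. A different choice of $z_0$ changes $c$ by a coboundary, so $u\mapsto[c_u]$ is a well-defined linear map from the space of Maass cusp forms with eigenvalue $s(1-s)$ to $H^1_\parab(G_q;\mc V_s^{\omega^*,\infty})$.

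It remains to prove bijectivity. For injectivity I would construct an inverse transform: pairing a representing cocycle (or the function $\psi$ that normalizes it at a cusp) against the complementary kernel $R_{1-s}(z,\cdot)$ returns $u$ up to a nonzero scalar, while coboundaries are sent to $0$; equivalently, if $[c_u]=0$ then the boundary hyperfunction of $u$ vanishes, hence $u\equiv 0$ by injectivity of the Poisson transform on $\h$. Surjectivity is the substantive part. Given a parabolic class one must reconstruct a Maass cusp form; the natural candidate is $u(z)=\int_{\mP^1(\R)}R_{1-s}(z,t)\,d\mu_c(t)$ for a boundary distribution $\mu_c$ assembled from a representing cocycle, and one then has to verify $G_q$-invariance, the vanishing condition $\int_0^\lambda u(x+iy)\,dx=0$, and at most polynomial growth. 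In \cite{BLZ_part2} this is carried out by interposing the parabolic cohomology groups attached to the coefficient modules $\mc V_s^{\omega}$, $\mc V_s^{\omega,\infty}$ and $\mc V_s^{\omega^*}$, proving that the comparison maps between them are isomorphisms on parabolic cohomology (via exact sequences relating the modules, the theory of analytic boundary germs, and finite-dimensionality of the spaces involved), and reducing to the real-analytic model in which the transform is classical.

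The step I expect to be the main obstacle is exactly this surjectivity, in combination with getting the regularity class right: one must show that \emph{every} parabolic cocycle taking values in the smooth--semianalytic module $\mc V_s^{\omega^*,\infty}$, not merely in the real-analytic module $\mc V_s^{\omega}$, arises from a genuine Maass cusp form --- controlling the finitely many singular points that a cocycle is allowed to have and showing that the reconstructed eigenfunction is cuspidal with polynomial growth rather than merely some harmonic-type automorphic object. The chain of module comparisons and the boundary-germ analysis that underlie it form the technical core of \cite{BLZ_part2}.
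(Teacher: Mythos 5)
This theorem is imported from \cite{BLZ_part2}: the paper gives no proof of its own, only a recollection of the explicit integral transform \eqref{integraltransform}--\eqref{coboundproof} realizing the isomorphism. Your sketch reconstructs exactly that transform --- the Green's form paired with $R(t,\cdot)^s$, the path-splitting argument for the cocycle relation, parabolicity via integration out to a fixed cusp, the coboundary ambiguity in $z_0$ --- and correctly locates the substantive work (membership in $\mc V_s^{\omega^*,\infty}$ and, above all, surjectivity via the comparison of coefficient modules and boundary germs) in the cited reference, so it is consistent with, and no less complete than, the paper's own treatment.
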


The isomorphism from Theorem~\ref{BLZ_main} is given by an integral transform which we now recall. We define the function $R\colon \R \times \h \to \R$ by
\[
 R(t,z) \sceq \Ima\left( \frac{1}{t-z} \right).
\]
For two eigenfunctions $u,v$ of the Laplace-Beltrami operator $\Delta$, Green's form shall be denoted by
\[
 [u,v] \sceq \frac{\partial u}{\partial z}\cdot v dz + u \cdot \frac{\partial v}{\partial \overline z} d\overline z.
\]
We choose any $z_0\in \h$. Then the  parabolic 1-cocycle class in $H^1_\parab(G_q; \mc V^{\omega^*,\infty}_s)$  associated to the Maass cusp form $u$ with eigenvalue $s(1-s)$ is represented by the cocycle $c$ with
\begin{equation}\label{integraltransform}
 c_g(t) \sceq \int_{g^{-1}.z_0}^{z_0} [u,R(t,\cdot)^s].
\end{equation}
If $p\in G_q$ is parabolic and $x\in\R\cup\{\infty\}$ is fixed by $p$, then
\begin{equation}\label{coboundproof}
 \psi(t) \sceq \int_{z_0}^x [u, R(t,\cdot)^s]
\end{equation}
is in $\mc V_s^{\omega^*,\infty}$ and $c_p=\tau_s(p^{-1})\psi - \psi$. Here, the integration is along a path in $\h$ from $z_0$ to $x$ where only the endpoints are allowed to be in $\R\cup\{\infty\}$. Changing the choice of $z_0$ changes $c$ by a parabolic $1$-coboundary. 

To end this section, we show that we may take $z_0=\infty$ in \eqref{integraltransform} to define a representative of the $1$-cocycle class $[c]$. So let $z_0$ be any element in $\h$ and define a representative $c\colon G_q \to \mc V_s^{\omega^*,\infty}$ of $[c]$ via \eqref{integraltransform}. The parabolic element $T\in G_q$ fixes $x\sceq \infty$. Hence
\[
 c_T=\tau_s(T^{-1})\psi -\psi
\]
with $\psi$ as in \eqref{coboundproof}. Since $d\colon g\mapsto \tau_s(g^{-1})\psi -\psi$ is a parabolic $1$-coboundary, the parabolic $1$-cocycle $\wt c \sceq c-d$ represents the $1$-cocycle class $[c]$. As in \cite[Chap.\@ II.2]{Lewis_Zagier} it follows that
\[
 \tau_s(g^{-1})\int_{a}^{b} [u,R(t,\cdot)^s] = \int_{g^{-1}.a}^{g^{-1}.b} [u,R(t,\cdot)^s]
\]
for $a,b\in \h \cup \R \cup \{\infty\}$ and $g\in G_q$. Thus
\begin{align*}
 \wt c_g(t) &= \int_{g^{-1}.z_0}^{z_0} [u,R(t,\cdot)^s] - \int_{g^{-1}.z_0}^{g^{-1}.\infty} [u,R(t,\cdot)^s]  - \int_{z_0}^{\infty} [u,R(t,\cdot)^s] 
\\
&= \int_{g^{-1}.\infty}^{\infty} [u,R(t,\cdot)^s]. 
\end{align*}
We remark that the integration is performed along a path from $g^{-1}.\infty$ to $\infty$ in $\h$.

\section{Maass cusp forms and period functions}\label{part1}

The aim of this section is to show the bijection between solutions
of a finite-term functional equation and parabolic cohomology
classes, stated as Theorem~\ref{periodthm} below. Together with the
result of \cite{BLZ_part2} this establishes the bijection between certain solutions
of the functional equation and Maass cusp forms.
The main tool is the symbolic dynamics of the geodesic flow
constructed in \cite{Pohl_Symdyn2d} which was designed so that
$1$-eigenfunctions of the associated transfer operator satisfy
the functional equation.
In Section~\ref{oddeven} we show that the splitting of the solutions of the functional equation
into an odd and an even part 
is an immediate consequence of a symmetry of the symbolic dynamics and hence of the functional equation.

\subsection{Slow discrete dynamical system}\label{slowsys}

The second named author constructed in \cite{Pohl_Symdyn2d}  a discretization (symbolic dynamics) for the geodesic flow on the orbifold $G_q\backslash \h$ which is semi-conjugate to a discrete dynamical system on  $(0,\infty)\setminus G_q.\infty$. In this section we introduce this discrete dynamical system and the transfer operator family associated to it. Using the integral transform \eqref{integraltransform} with $z_0=\infty$, the geometry of the symbolic dynamics resp.\@ of the discrete dynamical system yields  immediately that Maass cusp forms give rise to 1-eigenfunctions of the transfer operator. 

We set $D_\st \sceq (0,\infty) \setminus G_q.\infty$. Recall the elements $U$ and $S$ of $G_q$ from Section~\ref{prelims}. For $k \in \N$ we define
\[
 g_k \sceq \big(U^kS)^{-1} = \frac{1}{\sin\frac{\pi}{q}} \mat{ \sin\big(\tfrac{k}{q}\pi\big) }{ -\sin\big(\tfrac{k+1}{q}\pi\big) }{ -\sin\big(\tfrac{k-1}{q}\pi\big) }{ \sin\big(\frac{k}{q}\pi\big) }.
\]
For $k=1,\ldots, q-1$ we set
\[
 D_{\st,k} \sceq \big(g_k^{-1}.0, g_k^{-1}.\infty \big) \cap D_\st.
\]
Then the discrete dynamical system developed in \cite{Pohl_Symdyn2d} is
\begin{equation}\label{slowsystem}
F\colon D_\st \to D_\st,\quad  F\vert_{D_{\st,k}} \sceq g_k \quad\text{for $k=1,\ldots, q-1$.}
\end{equation}
We shall refer to the system $(D_\st, F)$ as the \textit{slow discrete dynamical system}. Let $\Fct(D_\st; \C)$ denote the set of complex-valued functions on $D_\st$. The \textit{transfer operator with parameter $s\in\C$ associated to $F$} is the map
\[
 \mc L_{F,s}\colon \Fct(D_\st;\C) \to \Fct(D_\st;\C)
\]
given by 
\[
 \mc L_{F,s} = \sum_{k=1}^{q-1}\tau_s(g_k).
\]
In the following we will use $\mc L_{F,s}$ as an operator on $\Fct(\R^+;\C)$, which is obviously well-defined. Suppose that $u$ is a Maass cusp form for $G_q$ with eigenvalue $s(1-s)$. In the integral formula \eqref{integraltransform} let $z_0 \sceq \infty$ and set (for $t\in\R^+$)
\[
 \varphi \sceq c_S\colon t \mapsto \int_0^\infty [u,R(t, \cdot)^s].
\]
Because $g_k.0 = g_{k+1}.\infty$ for $k=1,\ldots, q-2$, and $g_1.\infty=\infty$, $g_{q-1}.0=0$, it follows that
\begin{equation}\label{modperiod}
 \varphi(t) = \int_0^\infty [u,R(t,\cdot)^s] = \sum_{k=1}^{q-1} \int_{g_k.0}^{g_k.\infty} [u,R(t,\cdot)^s] = \sum_{k=1}^{q-1}\tau_s(g_k)\varphi(t).
\end{equation}
Hence $\varphi$ is a 1-eigenfunction of the transfer operator $\mc L_{F,s}$.

\subsection{Period functions and parabolic cohomology}

We have just seen that any Maass cusp form with eigenvalue $s(1-s)$ gives rise to a $1$-eigenfunction of the transfer operator $\mc L_{F,s}$. In this section we will determine which $1$-eigenfunctions arise in this way.
For  $s\in \C$ we let $\FE_s(\R^+)_\omega^\dec$ be the space of functions $\psi\in C^\omega(\R^+;\C)$ such that $\psi$ satisfies the functional equation
\begin{equation}\label{funceq}
 \psi = \sum_{k=1}^{q-1}\tau_s(g_k)\psi.
\end{equation}
and has asymptotic expansions of the form
\begin{equation}
\label{ae1} 
\psi(t) \sim \sum_{m=0}^\infty C^*_m t^m \hspace{2cm}\hphantom{xxxx}\text{as $t\searrow 0$,}
\end{equation}
\begin{equation}
\label{ae2}
\psi(t) \sim t^{-2s} \sum_{m=0}^\infty D_m^* t^{-m} \hspace{2cm}\text{as $t\nearrow\infty$}
\end{equation}
with
\begin{equation}\label{condae}
 C^*_m = (-1)^{m+1} D_m^* \quad\text{for all $m\in\N_0$.}
\end{equation}
We call these functions \textit{period functions} for $G_q.$

\begin{remark}\label{equivasymp}
The asymptotic expansions \eqref{ae1}-\eqref{condae} are equivalent to the requirement that 
\begin{align*}
 \psi(t) & \sim \sum_{m=0}^\infty C_m^*t^m && \hspace{-2cm}\text{as $t\searrow 0$,}
\\
-\tau_s(S)\psi(t) & \sim \sum_{m=0}^\infty C_m^*t^m &&  \hspace{-2cm}\text{as $t\nearrow 0$.}
\end{align*}
This is equivalent to that the map
\[
 t \mapsto
\begin{cases}
 \psi(t) & \text{for $t\in\R^+$,}
\\
-\tau_s(S)\psi(t) & \text{for $t\in\R^-$}
\end{cases}
\]
extends smoothly to $\R$.
\end{remark}

\begin{remark}
In the case of the modular group, that is for $q=3$, the asymptotic expansions \eqref{ae1}-\eqref{condae} are equivalent to the decay conditions (see \cite{Lewis_Zagier})
\[
 \psi(t) = o\big(\tfrac1t\big) \quad (\text{as $t\searrow 0$}),\qquad \psi(t) = o(1)\quad (\text{as $t\nearrow\infty$}).
\]
This is due to the fact that the modular group enjoys some arithmetic features which are not valid for other Hecke triangle groups.
\end{remark}

In this section we show that the first parabolic cohomology space $H^1_\parab(G_q;\mc V_s^{\omega^*,\infty})$ and the space of period functions $\FE_s(\R^+)^\dec_\omega$ are isomorphic. Given a parabolic $1$-cocycle class $[c]\in H^1_\parab(G_q;\mc V_s^{\omega^*,\infty})$ there is a unique representative $c\in Z^1_\parab(G_q;\mc V_s^{\omega^*,\infty})$ with $c_T=0$ (take \eqref{integraltransform} with $z_0=\infty$). To $[c]$ we associate the map $\psi = \psi([c]) \colon \R^+\to \C$ defined by
\[
 \psi \sceq c_S\vert_{\R^+}.
\]
Conversely, for $\psi\in \FE_s(\R^+)^\dec_\omega$ we define a map $c=c(\psi) \colon G_q \to \mc V_s^{\omega^*,\infty}$ by $c_T\sceq 0$ and 
\[
 c_S \sceq
\begin{cases}
\psi & \text{on $\R^+$,}
\\
-\tau_s(S)\psi & \text{on $\R^-$.} 
\end{cases}
\]

\begin{thm}\label{periodthm}
The map 
\[
\left\{
\begin{array}{ccl}
\FE_s(\R^+)^\dec_\omega & \to & H^1_\parab(G_q; \mc V_s^{\omega^*,\infty})
\\
\psi & \mapsto & [c(\psi)]
\end{array}
\right.
\]
defines a linear isomorphism between $\FE_s(\R^+)^\dec_\omega$ and $H^1_\parab(G_q; \mc V_s^{\omega^*,\infty})$. Its inverse is given by 
\[
\left\{
\begin{array}{ccl}
H^1_\parab(G_q; \mc V_s^{\omega^*,\infty})& \to & \FE_s(\R^+)^\dec_\omega
\\{ }
[c] & \mapsto & \psi([c]).
\end{array}
\right.
\]
\end{thm}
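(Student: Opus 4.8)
The plan is to show that the two maps $\psi \mapsto [c(\psi)]$ and $[c] \mapsto \psi([c])$ are mutually inverse well-defined linear maps. Linearity is immediate from the formulas, so the work is in well-definedness and in the verification that the composites are the identity.

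First I would check that $c(\psi)$ really lands in $Z^1_\parab(G_q;\mc V_s^{\omega^*,\infty})$. Since $G_q$ is generated by $S$ and $T$ with relations $S^2=\id$ and $(ST^{-1})^q=\id$ (equivalently $U^q=\id$), a cocycle is determined by the values $c_S$ and $c_T$, and these must satisfy the cocycle identities forced by the two relations: $c_{S^2}=0$ forces $\tau_s(S)c_S + c_S = 0$ (i.e. $c_S = -\tau_s(S)c_S$ off a suitable set), and $c_{U^q}=0$ forces a telescoping identity $\sum_{j=0}^{q-1}\tau_s(U^{-j})c_U = 0$ where $c_U = \tau_s(S)c_S + c_T = \tau_s(S)c_S$ (using $c_T=0$). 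Here the functional equation \eqref{funceq} is exactly designed to match this relation: one has to unwind the definitions of the $g_k=(U^kS)^{-1}$ and check that $\sum_{k=1}^{q-1}\tau_s(g_k)\psi = \psi$ translates, after applying $\tau_s(S)$ and reindexing, into the $U^q$-relation identity for the candidate cocycle on $\R^+$; on $\R^-$ one uses the definition $c_S = -\tau_s(S)\psi$ together with Remark~\ref{equivasymp} to see the two branches glue. One must also confirm that $c_S$ so defined actually lies in $\mc V_s^{\omega^*,\infty}$: real-analyticity away from a finite set is clear since $\psi\in C^\omega(\R^+)$ and $\tau_s(S)$ is a real-analytic change of variables, while the asymptotic expansion \eqref{asymexp} at $\infty$ is precisely encoded by \eqref{ae2} (and smoothness of $\tau_s(S)c_S$ across $0$ is Remark~\ref{equivasymp}). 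Parabolicity of the cocycle: the only parabolic conjugacy classes in $G_q$ are those of powers of $T$, and $c_T=0$ is visibly a coboundary term $\tau_s(T^{-1})\psi_T - \psi_T$ with $\psi_T=0$, so the parabolicity condition holds.

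Next, for the reverse map, given $[c]$ I would use the normalization established at the end of Section~2: every parabolic class has a representative $c$ with $c_T=0$, obtained from \eqref{integraltransform} with $z_0=\infty$, and this representative is unique (subtracting a coboundary $g\mapsto \tau_s(g^{-1})\psi-\psi$ that keeps $c_T=0$ forces $\psi$ to be $T$-invariant in the $\tau_s$-sense, hence $\psi=0$ by the asymptotics, so the representative is rigid). Then $\psi([c]) := c_S|_{\R^+}$ must be shown to lie in $\FE_s(\R^+)^\dec_\omega$: the functional equation \eqref{funceq} for $c_S|_{\R^+}$ is the $U^q$-cocycle relation read backwards (same computation as above, run in reverse), the real-analyticity comes from $c_S\in\mc V_s^{\omega^*,\infty}$, and the asymptotic conditions \eqref{ae1}--\eqref{condae} follow because $c_S = -\tau_s(S)c_S$ as elements of $\mc V_s^{\omega^*,\infty}$ (the $S^2$-relation) together with the definition of $\mc V_s^{\omega^*,\infty}$: the expansion of $c_S$ at $0$ and at $\infty$ are related by $-\tau_s(S)$, which is exactly \eqref{condae}. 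That the two composites are the identity is then formal: starting from $\psi$, the cocycle $c(\psi)$ has $c_T=0$ and $c_S|_{\R^+}=\psi$ by construction, so $\psi([c(\psi)])=\psi$; starting from $[c]$, pick the normalized representative $c$ with $c_T=0$, set $\psi=c_S|_{\R^+}$, and observe $c(\psi)$ agrees with $c$ on $S$ (on $\R^+$ by definition, on $\R^-$ by the $S^2$-relation satisfied by $c$) and on $T$ (both zero), hence $c(\psi)=c$ and $[c(\psi)]=[c]$.

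The main obstacle is the bookkeeping in the first step: showing that the finite-term functional equation \eqref{funceq}, which comes from the symbolic dynamics of \cite{Pohl_Symdyn2d}, is genuinely equivalent to the single group-cohomology relation coming from $U^q=\id$ (given $c_T=0$ and the $S^2$-relation). This requires carefully tracking the chain $g_k=(U^kS)^{-1}$, using the cocycle rule $c_{gh}=\tau_s(h^{-1})c_g+c_h$ to express each $c_{U^kS}$ in terms of $c_S$ and $c_U=\tau_s(S)c_S$, and checking that the telescoping sum reproduces $\sum_{k=1}^{q-1}\tau_s(g_k)\psi$ on the nose — including that the boundary behavior (the roles of $g_1.\infty=\infty$ and $g_{q-1}.0=0$, already noted before \eqref{modperiod}) is compatible with the normalization $c_T=0$. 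A secondary subtlety is verifying that the gluing across $t=0$ of the two branches of $c_S$ indeed produces an element of $\mc V_s^{\omega^*,\infty}$ with the correct finite exceptional set; this is handled by Remark~\ref{equivasymp} but needs to be invoked at the right moment. Everything else is routine once these compatibilities are in place.
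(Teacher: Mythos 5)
Your overall architecture --- well-definedness in both directions plus a formal check that the composites are the identity --- is exactly how the paper proceeds (Propositions~\ref{cocycletoperiod} and \ref{periodtococ}), and your treatment of the direction $\psi\mapsto c(\psi)$ matches the paper's: the paper isolates your ``gluing on $\R^-$'' step as Lemma~\ref{varphifun}, which shows via the reindexing $Sg_l^{-1}g_k=(U^{k-l}S)^{-1}$ that $-\tau_s(S)\psi$ satisfies \eqref{funceq} on $\R^-$, and then computes $c_{S^2}=c_{U^q}=0$ precisely as you outline. In the reverse direction you differ from the paper in one step: you obtain the functional equation for $c_S\vert_{\R^+}$ algebraically by running the $c_{U^q}=0$ relation backwards, whereas the paper obtains it geometrically by the contour shift $\int_0^\infty=\sum_k\int_{g_k.0}^{g_k.\infty}$ applied to the integral representation \eqref{integraltransform} with $z_0=\infty$ (equation \eqref{modperiod}). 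Both are valid, and your version has the mild advantage of not needing the Maass form behind the class for this particular point.

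There is, however, one concrete gap: you claim the real-analyticity of $\psi([c])$ on $\R^+$ ``comes from $c_S\in\mc V_s^{\omega^*,\infty}$'', but membership in $\mc V_s^{\omega^*,\infty}$ only guarantees real-analyticity on $\R\setminus E$ for some finite set $E$, which may very well meet $\R^+$; the space $\FE_s(\R^+)^\dec_\omega$ demands $C^\omega$ on all of $\R^+$. The paper closes this by writing the normalized representative explicitly as $c_S(t)=\int_0^\infty [u,R(t,\cdot)^s]$ with integration along the imaginary axis, a parameter integral that is visibly real-analytic for every $t\in\R^+$ because the path of integration stays away from $\R^+$. Since you already invoke the integral-transform representative to get $c_T=0$, the fix is at hand, but it must be made explicit: this is the one place where the analytic (rather than purely cohomological) description of the class is genuinely needed. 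A smaller remark: your uniqueness argument for the representative with $c_T=0$ (a $\tau_s(T)$-invariant element of $\mc V_s^{\omega^*,\infty}$ is $\lambda$-periodic and decays at infinity since $\Rea s>0$, hence vanishes) is a detail the paper leaves implicit, and it is correct.
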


To prove Theorem~\ref{periodthm} we have to show that the two maps are well-defined. This is split into Propositions~\ref{cocycletoperiod}  and \ref{periodtococ} below.

\begin{prop}\label{cocycletoperiod}
Let $[c]\in H^1_\parab(G_q; \mc V_s^{\omega^*,\infty})$. Then $\psi([c])\in \FE_s(\R^+)^\dec_\omega$.
\end{prop}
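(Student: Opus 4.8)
The plan is to unwind the definitions and use the explicit representative of the cocycle class coming from the integral transform. Given $[c]\in H^1_\parab(G_q;\mc V_s^{\omega^*,\infty})$, I first choose the unique representative $c\in Z^1_\parab(G_q;\mc V_s^{\omega^*,\infty})$ with $c_T=0$; as observed just before the theorem, such a representative is obtained from \eqref{integraltransform} with $z_0=\infty$, so that $c_g(t)=\int_{g^{-1}.\infty}^\infty [u,R(t,\cdot)^s]$ for the Maass cusp form $u$ corresponding to $[c]$ under Theorem~\ref{BLZ_main}. Then $\psi=\psi([c])=c_S\vert_{\R^+}$, and I must check three things: that $\psi\in C^\omega(\R^+;\C)$, that $\psi$ satisfies the functional equation \eqref{funceq}, and that $\psi$ has the asymptotic expansions \eqref{ae1}--\eqref{ae2} with the compatibility \eqref{condae}.

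The functional equation is the heart of the matter and is essentially already done in the text: since $c_S(t)=\int_0^\infty[u,R(t,\cdot)^s]=\varphi(t)$ in the notation of \eqref{modperiod}, and since $g_1.\infty=\infty$, $g_{q-1}.0=0$, $g_k.0=g_{k+1}.\infty$ for $k=1,\ldots,q-2$, the path from $0$ to $\infty$ decomposes into the $q-1$ pieces $[g_k.0,g_k.\infty]$; applying the $G_q$-equivariance $\tau_s(g^{-1})\int_a^b[u,R(t,\cdot)^s]=\int_{g^{-1}.a}^{g^{-1}.b}[u,R(t,\cdot)^s]$ to each piece turns the sum into $\sum_{k=1}^{q-1}\tau_s(g_k)\varphi$, which is exactly \eqref{funceq}. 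Real-analyticity of $\psi$ on $\R^+$ follows because $R(t,z)=\Ima\frac1{t-z}$ is real-analytic in $t$ for $t\in\R$, $z\in\h$, so $t\mapsto\int_0^\infty[u,R(t,\cdot)^s]$ is real-analytic on $\R^+$ (differentiation under the integral sign, the integral being along a path in $\h$ where the integrand and all its $t$-derivatives decay at the endpoints because $u$ is a cusp form). Alternatively one can cite that $c_S\in\mc V_s^{\omega^*,\infty}$, which already forces real-analyticity off a finite exceptional set, and then argue that the only possible exceptional points on $\R^+$ are excluded by the functional equation together with smoothness.

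For the asymptotics I would proceed as follows. As $t\searrow 0$: since $c_S\in\mc V_s^{\omega^*,\infty}$, the function $\tau_s(S)c_S$, i.e.\ $t\mapsto(t^{-2})^s c_S(-1/t)$, extends smoothly across $0$, and using the cocycle relation $c_{S^2}=\tau_s(S)c_S+c_S$ together with $S^2=\id$ (so $c_{S^2}=0$) one gets $c_S=-\tau_s(S)c_S$, whence $c_S$ itself — read as a function on $\R^-$ via this identity — is smooth at $0$; this is precisely the content of Remark~\ref{equivasymp}, and it yields \eqref{ae1}--\eqref{ae2} together with \eqref{condae}: the coefficients $C_m^*$ of the expansion of $\psi$ at $0^+$ and the coefficients $D_m^*$ of the expansion at $+\infty$ are matched, with the sign $(-1)^{m+1}$, precisely through the relation $\psi=-\tau_s(S)\psi$. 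Concretely, writing $\psi(t)\sim\sum C_m^*t^m$ as $t\searrow0$ and substituting into $-\tau_s(S)\psi(t)=-(t^{-2})^s\psi(-1/t)$ shows that $\psi(u)\sim-(-u)^{-2s}\sum C_m^*(-1/u)^{-m}=-(-1)^{-2s}u^{-2s}\sum C_m^*(-1)^m u^{m}$ as $u\nearrow\infty$ along $\R^-$; transporting this to $+\infty$ (again via the smooth extension forced by membership in $\mc V_s^{\omega^*,\infty}$, which handles both ends symmetrically) identifies $D_m^*=(-1)^{m+1}C_m^*$. I expect the main obstacle to be the careful bookkeeping of the half-line asymptotics: one must be sure that the asymptotic expansion at $+\infty$ provided by the defining property \eqref{asymexp} of $\mc V_s^{\omega^*,\infty}$ has coefficients compatible in the prescribed way with those at $0^+$, and this compatibility is exactly what the smooth-extension reformulation in Remark~\ref{equivasymp} buys us — so the real work is to verify that $c_S$, as defined here, does land in $\mc V_s^{\omega^*,\infty}$ with the correct gluing at $0$, which in turn is where one invokes the parabolic normalization $c_T=0$ and the relation $c_S=-\tau_s(S)c_S$. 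Everything else is routine.
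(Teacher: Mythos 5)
Your proposal is correct and follows essentially the same route as the paper: choose the representative with $c_T=0$ (so $\psi=c_S=\int_0^\infty[u,R(t,\cdot)^s]$), get real-analyticity from the parameter integral, get the functional equation from the path decomposition already carried out in \eqref{modperiod}, and get the asymptotic expansions \eqref{ae1}--\eqref{condae} from $c_S=-\tau_s(S)c_S$ together with Remark~\ref{equivasymp}. The extra bookkeeping you do at the end to track the sign $(-1)^{m+1}$ is exactly what that remark packages up.
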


\begin{proof}
Let $\psi\sceq \psi([c])$. By definition,
\[
 \psi(t) = c_S(t) = \int_{0}^{\infty} [u, R(t,\cdot)^s]
\]
for a (unique) Maass cusp form $u$ with eigenvalue $s(1-s)$. Here the integral is performed along the imaginary axis. One easily sees that this parameter integral is real-analytic on $\R^+$. Since  $c_S = -\tau_s(S)c_S$, Remark~\ref{equivasymp} yields the required asymptotic expansions for $\psi$. The discussion at the end of Section~\ref{slowsys} already shows that $\psi$ satisfies \eqref{funceq}.
\end{proof}

For Proposition~\ref{periodtococ} below we need the following lemma. The method of its proof is adapted from \cite{BLZ_part2}. 

\begin{lemma}\label{varphifun}
Suppose that  $\psi\colon \R^+\to \C$ satisfies \eqref{funceq}. Define $\varphi\colon \R\setminus\{0\} \to \C$ by
\begin{align*}
\varphi & \sceq 
\begin{cases}
\psi &\text{on $\R^+$,}
\\
 -\tau_s(S)\psi &\text{on $\R^-$.}
\end{cases}
\end{align*} 
Then $\varphi$ satisfies the functional equation \eqref{funceq} on $\R\setminus\{0\}$.
\end{lemma}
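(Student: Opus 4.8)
I want to show that $\varphi$, as defined piecewise on $\R^+$ and $\R^-$, satisfies the functional equation $\varphi = \sum_{k=1}^{q-1}\tau_s(g_k)\varphi$ on all of $\R\setminus\{0\}$, given only that $\psi$ satisfies it on $\R^+$. The key structural fact I intend to exploit is the explicit description of the elements $g_k = (U^kS)^{-1}$ and, in particular, how they act on the partition of $\R$ into positive and negative half-lines, together with the relation $S^2 = \id$ so that $\tau_s(S)^2 = \id$ (up to the usual care with branch cuts, which is why the functions are only defined off a finite set). The cleanest route is to rewrite everything in terms of the auxiliary element $S$: on $\R^-$ we have $\varphi = -\tau_s(S)\psi$, and $\tau_s(S)$ maps $\R^-$-data to $\R^+$-data via $t \mapsto -1/t$, which carries $\R^-$ to $\R^+$.

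**Key steps.** First I would fix $t \in \R\setminus\{0\}$ and split into the two cases $t>0$ and $t<0$. For $t>0$ the claimed identity is, after unwinding the definition of $\varphi$ on the right-hand side, precisely \eqref{funceq} for $\psi$ together with bookkeeping about which $g_k.t$ land in $\R^+$ versus $\R^-$: whenever $g_k.t \in \R^-$ one replaces $\varphi(g_k.t)$ by $-\tau_s(S)\psi(g_k.t)$, and one checks using the cocycle-type relation $j_s(g_k,t)\,j_s(S, g_k.t) = j_s(S g_k, t)$ that $\tau_s(g_k)\varphi$ at such a point equals $-\tau_s(Sg_k)\psi$, where $Sg_k = S(U^kS)^{-1}$. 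So the content is a matching of the terms $\tau_s(g_k)$ (for those $k$ with $g_k.t>0$) and $-\tau_s(Sg_k)$ (for the remaining $k$) against the single sum $\sum_{k=1}^{q-1}\tau_s(g_k)\psi$; this amounts to verifying that the collection of Moebius maps $\{g_k : g_k.t > 0\} \cup \{Sg_k : g_k.t < 0\}$, with appropriate signs, reproduces $\{g_1,\dots,g_{q-1}\}$ acting on $t>0$ — a finite combinatorial check on the interval endpoints $g_k^{-1}.0$, $g_k^{-1}.\infty$ that comes directly from the construction in \cite{Pohl_Symdyn2d}. The case $t<0$ is then handled by applying the already-established $\R^+$ identity to $-1/t \in \R^+$ and transporting it back through $\tau_s(S)$, using $\tau_s(S)^2 = \id$ and the fact that conjugating the set $\{g_k\}$ by $S$ permutes it appropriately (again a finite check).

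**Main obstacle.** The genuinely delicate part is not the algebra of Moebius transformations but the handling of the branch of $j_s(g,t) = ((ct+d)^{-2})^s$ and the precise locus where the various $g_k.t$ cross between $\R^+$ and $\R^-$, i.e.\ making the sign conventions in $\varphi = -\tau_s(S)\psi$ on $\R^-$ consistent with the cocycle identity $j_s(Sg,t) = j_s(g,t) j_s(S,g.t)$ without introducing spurious sign or phase errors when $ct+d$ changes sign. I would organize this by working on the connected components of $\R^+$ cut out by the preimages $g_k^{-1}.\{0,\infty\}$ so that on each component the assignment $k \mapsto (\text{sign of } g_k.t)$ is constant, verify the term-matching on one such component, and then note the identity extends by real-analyticity — which is exactly the method attributed to \cite{BLZ_part2}. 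The Maass-cusp-form input is not needed here; this is a purely formal statement about solutions of \eqref{funceq}, so I would keep the argument self-contained at the level of the $\tau_s$-action.
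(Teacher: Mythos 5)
Your overall strategy---substitute the definition of $\varphi$ into $\sum_k\tau_s(g_k)\varphi$, track which of the arguments land in $\R^-$, and reduce to \eqref{funceq} for $\psi$ at a transported positive point via a finite identity among the $g_k$---is the right shape, and the case $t>0$ is indeed immediate: since $g_k^{-1}=\tfrac{1}{\sin(\pi/q)}\textmat{\xi_k}{\xi_{k+1}}{\xi_{k-1}}{\xi_k}$ with $\xi_j=\sin(j\pi/q)\ge 0$, every $g_k^{-1}$ maps $\R^+$ into $\R^+$, so no bookkeeping is needed there. The real content is $t<0$, and your proposed mechanism for it does not close. For $t<0$ there is \emph{exactly one} index $l$ (determined by $t\in(-g_l^{-1}.\infty,-g_l^{-1}.0)$; these intervals tile $\R^-$) with $g_l^{-1}.t<0$, and the correct move is to factor $\tau_s(g_lS)$ out of $\varphi(t)-\sum_k\tau_s(g_k)\varphi(t)$, which reduces the claim to \eqref{funceq} for $\psi$ at the positive point $Sg_l^{-1}.t=-1/(g_l^{-1}.t)$ --- not at $-1/t$, and the transport element is $g_lS$, not $S$. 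The combinatorial fact you invoke is moreover false: conjugation by $S$ does not permute $\{g_1,\dots,g_{q-1}\}$, since $Sg_kS=U^{-k}S=g_{q-k}^{-1}$, which is not of the form $g_j$ (the permutation identity $Qg_k=g_{q-k}Q$ from Section~\ref{oddeven} uses $Q=\textmat{0}{1}{1}{0}\in\PGL(2,\R)$, not $S$). The identity that actually makes the terms match is that \emph{left multiplication by} $Sg_l^{-1}$ carries the set $\{g_k\}_{k\ne l}\cup\{S\}$ bijectively onto $\{g_1,\dots,g_{q-1}\}$, because $g_k=(U^kS)^{-1}$ and $U^q=\id$ give $Sg_l^{-1}g_k=(U^{k-l}S)^{-1}$ and $Sg_l^{-1}S=(U^{-l}S)^{-1}$. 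By contrast, applying \eqref{funceq} at $-1/t$ and pulling back through $\tau_s(S)$, as you propose, yields $\varphi(t)=-\sum_k\tau_s(Sg_k)\psi(t)=-\sum_k\tau_s(U^{-k})\psi(t)$, which involves different group elements from $\sum_{k\ne l}\tau_s(g_k)\psi(t)-\tau_s(g_lS)\psi(t)$ and cannot be matched term by term.

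Two smaller points. The branch issues you single out as the main obstacle are not present: for real arguments $j_s(g,t)=|ct+d|^{-2s}$ and the cocycle relation $j_s(gh,t)=j_s(g,h.t)\,j_s(h,t)$ holds exactly, so no sign or phase ambiguities arise. And the concluding appeal to real-analyticity is unavailable here --- the lemma assumes only that $\psi$ satisfies \eqref{funceq}, with no regularity --- but it is also unnecessary, since the argument above is pointwise on each interval $(-g_l^{-1}.\infty,-g_l^{-1}.0)$ and these cover $\R^-$.
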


\begin{proof}
It suffices to show that $\varphi$ satisfies \eqref{funceq} on $\R^-$. Let $t\in \R^-$ . One easily checks that $g_l^{-1}.t < 0$ if and only if 
\[
 -g_l^{-1}.\infty < t < -g_l^{-1}.0.
\]
Suppose that $t\in \big( -g_l^{-1}.\infty, -g_l^{-1}.0\big)$. Then
\begin{align*}
\varphi(t) - \sum_{k=1}^{q-1}\tau_s(g_k) \varphi(t) & = \varphi(t)- \sum_{{k=1}\atop {k\not=l}}^{q-1}\tau_s(g_k)\varphi(t) - \tau_s(g_l)\varphi(t)
\\
& = -\tau_s(S)\psi(t) - \sum_{{k=1} \atop {k\not=l}}^{q-1}\tau_s(g_k)\psi(t) + \tau_s(g_l) \tau_s(S) \psi(t)
\\
& = \tau_s(g_lS) \left[ -\tau_s(Sg_l^{-1}S)\psi - \sum_{{k=1} \atop {k\not=l}}^{q-1}\tau_s(Sg_l^{-1}g_k)\psi + \psi \right](t)
\\
&= \tau_s(g_lS) \left[ -\tau_s\big( (U^{-l}S)^{-1}\big)\psi -\sum_{{k=1} \atop {k\not=l}}^{q-1} \tau_s\big( (U^{k-l}S)^{-1} \big)\psi + \psi  \right](t)
\\
& = \tau_s(g_lS) \left[ -\sum_{{k=0} \atop {k\not=l}}^{q-1}\tau_s\big( (U^{k-l}S)^{-1} \big)\psi + \psi\right](t)
\\
& = \tau_s(g_lS) \left[-\sum_{k=1}^{q-1} \tau_s(g_k)\psi + \psi\right](t)
\end{align*}
which vanishes because $\psi$ satisfies \eqref{funceq}. Therefore \eqref{funceq} is satisfied by $\varphi$ on $\R^-$.
\end{proof}

\begin{prop}\label{periodtococ}
Let $\psi\in\FE_s(\R^+)^\dec_\omega$. Then $c(\psi)$ is in $Z^1_\parab(G_q;\mc V_s^{\omega^*,\infty})$.
\end{prop}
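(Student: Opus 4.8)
The goal is to verify that $c = c(\psi)$, with $c_T \sceq 0$ and $c_S$ defined piecewise by $\psi$ on $\R^+$ and $-\tau_s(S)\psi$ on $\R^-$, is a well-defined element of $Z^1_\parab(G_q;\mc V_s^{\omega^*,\infty})$. Three things must be checked: (i) the functions $c_T$ and $c_S$ actually lie in the module $\mc V_s^{\omega^*,\infty}$; (ii) the assignment $T\mapsto c_T$, $S\mapsto c_S$ extends (uniquely, using the cocycle relation and the relations $S^2 = \id = (TS)^q$ of $G_q$) to a $1$-cocycle $c\colon G_q \to \mc V_s^{\omega^*,\infty}$; and (iii) this cocycle is parabolic.

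For (i): by Lemma~\ref{varphifun} the function $\varphi$ obtained from $\psi$ by gluing $\psi$ on $\R^+$ and $-\tau_s(S)\psi$ on $\R^-$ satisfies \eqref{funceq} on $\R\setminus\{0\}$, and by Remark~\ref{equivasymp} the asymptotic conditions \eqref{ae1}--\eqref{condae} on $\psi$ say exactly that this glued map $\varphi = c_S$ extends smoothly across $0$; since $\psi \in C^\omega(\R^+)$, real-analyticity of $c_S$ holds away from the finite set where the gluing and the singularities of the $g_k$ occur, and the asymptotic expansion \eqref{asymexp} at $\infty$ is \eqref{ae2}. Thus $c_S \in \mc V_s^{\omega^*,\infty}$; and $c_T = 0$ is trivially in the module.

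For (ii): since $G_q$ is generated by $S$ and $T$ subject to $S^2 = \id$ and $(TS)^q = \id$, a $1$-cocycle $c$ (in the restricted formulation $c_{gh} = \tau_s(h^{-1})c_g + c_h$) is determined by $c_S$ and $c_T$, and such an extension exists iff the prescribed values are compatible with these two relators, i.e.\ the cocycle obstruction $c_{S^2}$ computed from $c_S$ vanishes and likewise $c_{(TS)^q}$ vanishes. The relation $S^2=\id$ forces $c_{S^2} = \tau_s(S^{-1})c_S + c_S = 0$, i.e.\ $c_S = -\tau_s(S)c_S$, which holds by the very definition of $c_S$ (it is built to be anti-invariant under $\tau_s(S)$: on $\R^-$, $\tau_s(S)c_S = \tau_s(S)(-\tau_s(S)\psi) = -\psi$ moved to $\R^+$, up to the sign). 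The relation $(TS)^q = \id$, i.e.\ $U^q = \id$, requires $c_{U^q} = 0$; writing $c_U = \tau_s(S^{-1})c_T + c_S = c_S$ (as $c_T=0$) and iterating the cocycle formula, $c_{U^q} = \sum_{j=0}^{q-1}\tau_s(U^{-j})c_S$, and one must show this vanishes. This is precisely where the functional equation \eqref{funceq} is used: unwinding $g_k = (U^kS)^{-1}$ and $c_S = -\tau_s(S)c_S$, the sum $\sum_{j=0}^{q-1}\tau_s(U^{-j})c_S$ rearranges — much as in the computation at the end of Section~\ref{slowsys} and in the proof of Lemma~\ref{varphifun} — into $\pm\tau_s(\text{something})\big(\varphi - \sum_{k=1}^{q-1}\tau_s(g_k)\varphi\big)$, which is $0$ by \eqref{funceq} for $\varphi$ on $\R\setminus\{0\}$. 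The main obstacle is organizing this index bookkeeping cleanly while keeping track of the $\R^\pm$ case distinction and of where $\tau_s(U^{-j})$ maps the argument; this is the technical heart of the proof.

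For (iii): parabolicity requires that for every parabolic $p\in G_q$ there is $\psi_p \in \mc V_s^{\omega^*,\infty}$ with $c_p = \tau_s(p^{-1})\psi_p - \psi_p$. Up to conjugacy the parabolic classes are generated by $T$ (cuspidal) and, depending on $q$, possibly by the parabolic conjugacy classes coming from the cusp; it suffices to check the defining condition on one representative of each cuspidal conjugacy class, since the cocycle relation then propagates it to conjugates. For $p=T$ this is immediate and vacuous: $c_T = 0 = \tau_s(T^{-1})0 - 0$, so take $\psi_T=0$. For the remaining parabolic class, one exhibits the required $\psi_p$; in the integral-transform picture it would be $\int_{z_0}^{x}[u,R(t,\cdot)^s]$ as in \eqref{coboundproof}, but since we are going the other way (from $\psi$ to $c$) one constructs $\psi_p$ directly from $\psi$ using the functional equation and the asymptotics, checking that the smoothness/analyticity and the asymptotic expansion \eqref{asymexp} hold for it — the asymptotics at $0$ and $\infty$ of $\psi$ are exactly what guarantee $\psi_p \in \mc V_s^{\omega^*,\infty}$. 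I would write these last verifications out only to the extent of pinning down $\psi_p$ explicitly and invoking Remark~\ref{equivasymp}; the routine estimates I would leave to the reader.
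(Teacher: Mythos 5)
Your plan for (i) and (ii) is essentially the paper's proof. Membership of $c_S$ in $\mc V_s^{\omega^*,\infty}$ follows from Remark~\ref{equivasymp} exactly as you say, and well-definedness reduces to $c_{S^2}=0$ (immediate from the built-in anti-invariance $c_S=-\tau_s(S)c_S$, plus smoothness at $0$) and $c_{U^q}=0$. The bookkeeping you call the ``technical heart'' is in fact short once Lemma~\ref{varphifun} together with smoothness at $0$ gives $c_S=\sum_{k=1}^{q-1}\tau_s(g_k)c_S$ on all of $\R$: since $g_k=(U^kS)^{-1}=SU^{-k}$, one has $\tau_s(U^{-k})c_S=\tau_s(S)\tau_s(g_k)c_S$, hence $c_{U^q}=c_S+\tau_s(S)\sum_{k=1}^{q-1}\tau_s(g_k)c_S=c_S+\tau_s(S)c_S=0$, which is exactly the cancellation $-\tau_s(S)\bigl(c_S-\sum_k\tau_s(g_k)c_S\bigr)$ you predicted.

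The one place you go astray is part (iii): there is no ``remaining parabolic class'' for which a new $\psi_p$ must be constructed and estimated. The Hecke triangle group $G_q$ has a single cusp, so every parabolic element is conjugate to a power of $T$, and the propagation-to-conjugates argument you already invoke finishes the job: from $c_T=0$ one gets $c_{T^n}=0$, and then $c_{gT^ng^{-1}}=\tau_s(gT^{-n}g^{-1})\psi_p-\psi_p$ with $\psi_p=\tau_s(g)c_g\in\mc V_s^{\omega^*,\infty}$. The paper omits this verification entirely as implicit; your version is more explicit but should simply record that nothing beyond the conjugation argument is needed, rather than deferring a construction that does not exist.
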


\begin{proof}
Let $c\sceq c(\psi)$. Obviously, $c_S$ is real-analytic on $\R$ up to a finite subset. By Remark~\ref{equivasymp}, $c_S$ and $\tau_s(S)c_S$ extend smoothly to $\R$. Thus, $c_S$ determines an element in $\mc V_s^{\omega^*,\infty}$. Hence it only remains to show that $c$ is well-defined. For this it suffices to prove that $c_{S^2}$ and $c_{U^q}$ vanish. At first we see that
\[
 c_{S^2}(t) = \tau_s(S) c_S(t) + c_S(t) = 
\begin{cases}
-\psi(t) + \psi(t) = 0 & \text{for $t>0$,}
\\
\tau_s(S)\psi(t) - \tau_s(S)\psi(t) = 0& \text{for $t<0$.} 
\end{cases}
\]
Since $c_{S^2}$ is smooth on $\R$, it vanishes for $t=0$ as well.
Lemma~\ref{varphifun} and regularity yield
\[
 c_S = \sum_{k=1}^{q-1} \tau_s(g_k)c_S
\]
on $\R$. Thus we find
\begin{align*}
c_{U^q} & = \sum_{k=0}^{q-1} \tau_s(U^{-k})c_S = c_S + \sum_{k=1}^{q-1} \tau_s(U^{-k})c_S
\\
& = c_S + \tau_s(S) \sum_{k=1}^{q-1} \tau_s( SU^{-k}) c_S 
\\
& = c_S + \tau_s(S) \sum_{k=1}^{q-1} \tau_s(g_k) c_S 
\\
& = c_S + \tau_s(S) c_S = 0.
\end{align*}
This completes the proof.
\end{proof}

\subsection{Odd and even Maass cusp forms} \label{oddeven}
A Maass cusp form $u$ is called \textit{even} if $u(z) = u(-\overline{z})$ for all $z\in \h$. It is called \textit{odd} if $u(z) = -u(-\overline{z})$ for all $z\in \h$. The space of Maass cusp forms decomposes as a direct sum into the spaces of even Maass cusp forms and odd ones. Let 
\[
 Q\sceq \mat{0}{1}{1}{0} \qquad \in \PGL(2,\R).
\]
Then \cite[Equation~(2.4)]{Lewis_Zagier} (which holds \textit{verbatim} for Hecke triangle groups) shows that the even Maass cusp forms with eigenvalue $s(1-s)$ correspond to the period functions $\psi \in \FE_s(\R^+)^\dec_\omega$ which satisfy $\psi = \tau_s(Q)\psi$, and the odd Maass cusp forms with eigenvalue $s(1-s)$ correspond to the period functions $\psi\in\FE_s(\R^+)^\dec_\omega$ with $\psi= -\tau_s(Q)\psi$. Let
\begin{align*}
\FE_s(\R^+)^{\dec,+}_\omega & \sceq \{ \psi\in\FE_s(\R^+)^\dec_\omega \mid \tau_s(Q)\psi = \psi \}
\intertext{resp.\@}
\FE_s(\R^+)^{\dec,-}_\omega & \sceq \{ \psi\in\FE_s(\R^+)^\dec_\omega \mid -\tau_s(Q)\psi = \psi \}
\end{align*}
denote the set of \textit{even} resp.\@ \textit{odd period functions}. 
In this section we will show that the functional equation \eqref{funceq} and the invariance resp.\@ anti-invariance under $\tau_s(Q)$ can be encoded in a single functional equation in each case.

For this we let $m\sceq \lfloor \tfrac{q+1}{2}\rfloor$. For $q$ even, we consider the functional equations
\begin{align}
\label{funceqmod1}
\psi & = \sum_{k=m+1}^{q-1} \tau_s(g_k)\psi + \tfrac12\tau_s(g_m)\psi + \tfrac12\tau_s(Qg_m)\psi + \sum_{k=m+1}^{q-1} \tau_s(Qg_k)\psi,
\\
\label{funceqmod2}
\psi & = \sum_{k=m+1}^{q-1} \tau_s(g_k)\psi + \tfrac12\tau_s(g_m)\psi - \tfrac12\tau_s(Qg_m)\psi - \sum_{k=m+1}^{q-1}\tau_s(Qg_k)\psi.
\end{align}
For $q$ odd, we consider the functional equations
\begin{align}
\label{funceqmod3}
\psi & = \sum_{k=m}^{q-1} \tau_s(g_k)\psi + \sum_{k=m}^{q-1} \tau_s(Qg_k)\psi,
\\
\label{funceqmod4}
\psi & = \sum_{k=m}^{q-1} \tau_s(g_k)\psi - \sum_{k=m}^{q-1} \tau_s(Qg_k)\psi.
\end{align}

Invoking the crucial identity
\[
Qg_k = g_{q-k}Q \qquad\text{for all $k\in\Z$},
\]
which follows from $QS=SQ$ and $QU=U^{-1}Q$, the proof of the following proposition is straightforward.

\begin{prop}
If $q$ is even, then
\begin{align*}
\FE_s(\R^+)^{\dec,+}_\omega &= \{ \psi\in C^\omega(\R^+;\C) \mid \text{$\psi$ satisfies \eqref{ae1}-\eqref{condae} and \eqref{funceqmod1}}\}
\intertext{and}
\FE_s(\R^+)^{\dec,-}_\omega &= \{ \psi\in C^\omega(\R^+;\C) \mid \text{$\psi$ satisfies \eqref{ae1}-\eqref{condae} and \eqref{funceqmod2}}\}.
\end{align*}
If $q$ is odd, then
\begin{align*}
\FE_s(\R^+)^{\dec,+}_\omega &= \{ \psi\in C^\omega(\R^+;\C) \mid \text{$\psi$ satisfies \eqref{ae1}-\eqref{condae} and \eqref{funceqmod3}}\}
\intertext{and}
\FE_s(\R^+)^{\dec,-}_\omega &= \{ \psi\in C^\omega(\R^+;\C) \mid \text{$\psi$ satisfies \eqref{ae1}-\eqref{condae} and \eqref{funceqmod4}}\}.
\end{align*}
\end{prop}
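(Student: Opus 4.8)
The strategy is to rewrite the basic functional equation \eqref{funceq} by folding the sum $\sum_{k=1}^{q-1}\tau_s(g_k)\psi$ around the ``middle'' index $m=\lfloor\frac{q+1}{2}\rfloor$, using the relation $Qg_k = g_{q-k}Q$ and the fact that $\psi$ is either $\tau_s(Q)$-invariant or $\tau_s(Q)$-anti-invariant. First I would record the elementary consequence of the stated identity: for $\psi$ with $\tau_s(Q)\psi = \varepsilon\psi$, $\varepsilon\in\{+1,-1\}$, one has $\tau_s(g_{q-k})\psi = \tau_s(g_{q-k})\tau_s(Q)(\varepsilon\psi) = \varepsilon\,\tau_s(Qg_k)\psi$ (using $\tau_s(g_{q-k}Q) = \tau_s(Qg_k)$ from the cocycle-type relation for $\tau_s$, which is multiplicative up to the action, i.e.\ $\tau_s(ab) = \tau_s(a)\tau_s(b)$ in the sense used throughout Section~\ref{prelims}). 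This lets me replace, in the sum $\sum_{k=1}^{q-1}\tau_s(g_k)\psi$, each term with index $k<m$ by $\varepsilon\,\tau_s(Qg_{q-k})\psi$, i.e.\ reindex the low half of the sum into $Q$-twisted terms indexed by the high half.

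Next I would split into the two parity cases for $q$. If $q$ is odd, then $m = \frac{q+1}{2}$ and $q-1 = 2m-2$, so the indices $1,\dots,q-1$ split as $\{1,\dots,m-1\}$ and $\{m,\dots,q-1\}$, with $k\mapsto q-k$ sending the first set bijectively onto $\{m+1,\dots,q-1\}$ — so there is no fixed ``middle'' index and no term with a $\frac12$ coefficient. Substituting gives exactly \eqref{funceqmod3} (for $\varepsilon=+1$) and \eqref{funceqmod4} (for $\varepsilon=-1$). If $q$ is even, then $m = \frac{q}{2}$ is itself fixed by $k\mapsto q-k$ (indeed $q-m=m$); the involution pairs $\{1,\dots,m-1\}$ with $\{m+1,\dots,q-1\}$ and fixes $k=m$. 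Here one symmetrizes the single middle term: $\tau_s(g_m)\psi = \frac12\tau_s(g_m)\psi + \frac12\varepsilon\,\tau_s(Qg_m)\psi$ (using $g_m = g_{q-m}$ and the invariance), which produces the $\frac12$-coefficients in \eqref{funceqmod1}–\eqref{funceqmod2}. Carrying out the reindexing on the remaining terms yields \eqref{funceqmod1} for $\varepsilon=+1$ and \eqref{funceqmod2} for $\varepsilon=-1$.

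For the converse inclusions I would argue that each of \eqref{funceqmod1}–\eqref{funceqmod4}, \emph{together with} the $Q$-(anti)symmetry it tacitly encodes, recovers \eqref{funceq}; but since the displayed characterizations on the right-hand sides do \emph{not} explicitly assume $\tau_s(Q)\psi = \pm\psi$, the real content is that the modified functional equation already forces this symmetry. I would obtain this by applying $\tau_s(Q)$ to, say, \eqref{funceqmod1}, using $Qg_k = g_{q-k}Q$ to rewrite $\tau_s(Q)\tau_s(g_k)\psi = \tau_s(g_{q-k})\tau_s(Q)\psi$ and $\tau_s(Q)\tau_s(Qg_k)\psi = \tau_s(g_{q-k})\psi$, so that the system \{\eqref{funceqmod1}, $Q$ applied to \eqref{funceqmod1}\} becomes a linear system whose solution is $\tau_s(Q)\psi=\psi$ and the original \eqref{funceq}; for the odd equations one lands on $\tau_s(Q)\psi = -\psi$. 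The asymptotic conditions \eqref{ae1}–\eqref{condae} are carried along unchanged since they do not interact with the folding; one only needs that the regularity class $C^\omega(\R^+;\C)$ together with \eqref{ae1}–\eqref{condae} is exactly the defining data of $\FE_s(\R^+)^\dec_\omega$, which is how it was defined.

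\textbf{Main obstacle.} The bookkeeping is the only real difficulty: one must handle the even and odd cases separately, keep straight that $k\mapsto q-k$ fixes $k=m$ precisely when $q$ is even, and verify that the index ranges in \eqref{funceqmod1}–\eqref{funceqmod4} (starting at $m+1$ versus $m$) match the image of $\{1,\dots,m-1\}$ under the involution in each parity. The one genuine point to be careful about — rather than purely routine — is the converse direction: showing that the folded equation alone \emph{implies} the $Q$-eigenvalue relation, which requires that the two-by-two linear system obtained by adjoining the $Q$-transform of the folded equation be non-degenerate (it is, being essentially $\begin{pmatrix}1&-\varepsilon\\-\varepsilon&1\end{pmatrix}$ up to the operator $\tfrac12(1+\varepsilon\,\tau_s(Q))$), so that no spurious solutions outside $\FE_s(\R^+)^{\dec,\pm}_\omega$ are introduced.
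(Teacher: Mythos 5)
Your proposal is correct and is exactly the argument the paper has in mind: the paper omits the proof, calling it straightforward from the identity $Qg_k=g_{q-k}Q$, and your folding of the sum around $m=\lfloor\tfrac{q+1}{2}\rfloor$ (with the symmetrized middle term when $q$ is even) is that argument. For the converse, note that no $2\times 2$ system is needed: the right-hand side of each folded equation is visibly sent to $\pm$ itself by $\tau_s(Q)$ (since $\tau_s(Q)\tau_s(g_k)=\tau_s(Qg_k)$ and $\tau_s(Q)\tau_s(Qg_k)=\tau_s(g_k)$), so applying $\tau_s(Q)$ to both sides immediately yields $\tau_s(Q)\psi=\pm\psi$, after which unfolding recovers \eqref{funceq}.
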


\section{The Selberg zeta function as a Fredholm determinant}\label{part2}

In this section we show that the Selberg zeta function for $G_q$ equals the Fredholm determinant of a transfer operator family arising from a discretization for the geodesic flow on $G_q\backslash \h$, which is closely related to the slow discrete dynamical system $(D_\st, F)$ from Section~\ref{slowsys}. Since the elements 
\[
 g_1= \mat{1}{-\lambda}{0}{1} \quad\text{and}\quad g_{q-1} = \mat{1}{0}{-\lambda}{1}
\]
in $(D_\st, F)$ are parabolic (the elements $g_2,\ldots, g_{q-2}$ are hyperbolic), the associated transfer operator $\mc L_{F,s}$ is not nuclear and hence does not have a Fredholm determinant. 

To overcome this problem, we accelerate $(D_\st,F)$ on parabolic elements. The outcome will be a discretization for the geodesic flow of which the associated transfer operator family $\mc L_{H,s}$ is seen to be nuclear of order $0$ on an appropriate Banach space. For its Fredholm determinant we find
\[
 Z(s) = \det(1-\mc L_{H,s}).
\]
Throughout we use the following notation: $F$ is the slow system from Section~\ref{slowsys}, $G$ its acceleration
and $H$ is conjugate to $G$ by a certain Moebius transformation to be defined in \eqref{mcT}.
\subsection{Fast discrete dynamical system}

In the following we construct the acceleration of the slow discrete dynamical system $(D_\st,F)$ on its parabolic elements. A crucial feature of the accelerated discrete dynamical system is that it arises from a cross section for the geodesic flow on $G_q\backslash\h$. This in turn is inherited from the fact that already $(D_\st, F)$ arose from a cross section (see below).

Let $Y\sceq G_q \backslash \h$. Suppose that $\mu$ is a measure on the set of geodesics on $Y$. A subset $\wh\CS$ of the unit tangent bundle of $Y$ is called a \textit{cross section} (w.r.t.\@ $\mu$) for the geodesic flow on $Y$ if $\mu$-almost every geodesic on $Y$ intersects $\wh\CS$ infinitely often in past and future and if each such intersection is discrete in time. The first requirement means that for $\mu$-almost every geodesic $\wh\gamma$ on $Y$ there is a bi-infinite sequence $(t_n)_{n\in\Z}$ such that 
\[
\lim_{n\to \pm \infty} t_n = \pm \infty
\]
and $\wh\gamma'(t_n) \in \wh\CS$ for all $n\in\Z$. The latter means that for each intersection time $t$, that is $\wh\gamma'(t)\in \wh\CS$, there exists $\eps >0$ such that $\wh\gamma'( (t-\eps,t+\eps) ) \cap \wh\CS = \{ \wh\gamma'(t) \}$. Let $\pi\colon S\h \to SY$ denote the canonical quotient map from the unit tangent bundle of $\h$ to that of $Y$. A \textit{set of representatives} for the cross section $\wh\CS$ (w.r.t.\@ $\mu$) is a subset $\CS'$ of $S\h$ such that $\pi$ induces a bijection $\CS'\to \wh\CS$. For each $v\in S\h$ let $\gamma_v$ be the (unit speed) geodesic on $\h$ determined by $\gamma_v'(0)=v$. 

Let $\mc N$ denote the set of all unit tangent vectors $v\in S\h$ such that $\gamma_v(\infty) \in G_q.\infty$ or $\gamma_v(-\infty) \in G_q.\infty$. Further let $\NIC$ (`not infinitely often coded') denote the set of geodesics on $Y$ with unit tangent vectors in $\pi(\mc N)$, and suppose that $\mu$ is a measure on the set of geodesics on $Y$ such that $\NIC$ is a $\mu$-null set. The cross section from which the slow discrete dynamical system $(D_\st,F)$ arises is given as follows \cite{Pohl_Symdyn2d}: A  set of representatives is 
\[
 \CS'_F \sceq \left\{ a\frac{\partial}{\partial x}\vert_{iy} + b\frac{\partial}{\partial y}\vert_{iy} \in S\h \left\vert\ y>0,\ a>0,\ b\in\R \vphantom{\frac{\partial}{\partial x}\vert_{iy}}\right.\right\}\setminus\mc N.
\]
This is the set of unit tangent vectors in $S\h$ which are based on $i\R^+$ and point into the right half space $\{z\in\C \mid \Rea z>0\}$ such that the determined geodesics do not end or start in a cuspidal point. The cross section is then 
\[
 \wh\CS_F \sceq \pi(\CS'_F).
\]
It is a cross section w.r.t.\@ $\mu$, hence in particular all closed geodesics intersect $\wh\CS_F$ infinitely often. 

Let $R_F$ be the \textit{first return map} w.r.t.\@ the cross section $\wh\CS_F$, that is the map 
\[
 R_F\colon\left\{
\begin{array}{ccl}
 \wh\CS_F & \to & \wh\CS_F
\\
\wh v & \mapsto & \wh{\gamma_v}'(t_0)
\end{array}
\right.
\]
where $v\sceq \big(\pi\vert_{\CS'_F})^{-1}(\wh v)$, $\wh{\gamma_v}\sceq \pi(\gamma_v)$ and
\[
 t_0 \sceq \min\{ t>0 \mid \wh{\gamma_v}'(t) \in \wh\CS_F \}.
\]
In \cite{Pohl_Symdyn2d} it is shown that this minimum exists. Further define $\tau\colon \wh\CS_F \to \R^2$ by $\tau(\wh v) \sceq \big(\gamma_v(\infty),\gamma_v(-\infty)\big)$ and let $\pr_1\colon \R^2 \to \R$ be the projection on the first component. 
Then $D_\st$ is the image of $\wh\CS_F$ under $\pr_1 \circ \tau$ and $F$ is the unique self-map
of $D_\st$ such that the diagram
\[
\xymatrix{
\wh\CS_F \ar[r]^{R_F} \ar[d]_{\tau} & \wh\CS_F\ar[d]^{\tau}
\\
\R^2 \ar[d]_{\pr_1} & \R^2 \ar[d]^{\pr_1}
\\
D_\st \ar[r]^F & D_\st
}
\]
commutes and $\pr_1\circ\tau$ is surjective (for details see \cite{Pohl_Symdyn2d}). To derive from this the accelerated system we set
\begin{align*}
\wh{\mc N}_G & \sceq \big\{ \wh v\in \wh\CS_F\ \big\vert\  \pr_1(\tau(R_F^{-1}(\wh v))), \pr_1(\tau(\wh v)) \in \big( g_1^{-1}.0, g_1^{-1}.\infty\big)\big\}
\\
& \qquad \cup \big\{ \wh v\in \wh\CS_F \ \big\vert\  \pr_1(\tau(R_F^{-1}(\wh v))), \pr_1(\tau(\wh v)) \in \big(g_{q-1}^{-1}.0, g_{q-1}^{-1}.\infty\big) \big\}
\\
& = \big\{ \wh v\in\wh\CS_F\ \big\vert\ \pr_1(\tau(R_F^{-1}(\wh v))) \in \big(g_1^{-2}.0, g_1^{-1}.\infty\big) \big\}
\\
& \qquad \cup \big\{ \wh v\in\wh\CS_F\ \big\vert\ \pr_1(\tau(R_F^{-1}(\wh v))) \in \big(g_{q-1}^{-1}.0, g_{q-1}^{-2}.\infty\big) \big\}.
\end{align*}
These are $R_F$-images of the elements $\wh w$ in $\wh\CS_F$  for which $R_F(\wh w)$ and $R^2_F(\wh w)$ project to points in $D_{\st,1}$ resp.\@ $D_{\st,q-1}$. Let $\mc N_G\sceq \pi^{-1}(\wh{\mc N}_G)$ and define
\[
 \CS'_G \sceq \CS'_F\setminus\mc N_G \quad\text{and}\quad \wh\CS_G \sceq \wh\CS_F\setminus \wh{\mc N}_G = \pi\big(\CS'_G\big).
\]

One easily checks the following lemma.

\begin{lemma}\label{fastcs}\mbox{ }
\begin{enumerate}[{\rm (i)}]
\item\label{fastcsii} Let $\mu$ be a measure on the set of geodesics on $Y$ such that $\NIC$ is a $\mu$-null set. Then $\wh\CS_G$ is a cross section \wrt $\mu$ for the geodesic flow on $Y$.
\item\label{fastcsi} $\CS'_G$ is a set of representatives for $\wh\CS_G$.
\item Each closed geodesic on $Y$ intersects $\wh\CS_G$ infinitely often.
\end{enumerate}
\end{lemma}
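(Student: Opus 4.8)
The plan is to derive all three statements from the corresponding properties of the slow cross section $\wh\CS_F$, which are part of \cite{Pohl_Symdyn2d}, together with the explicit description of the removed set $\wh{\mc N}_G$ recorded above.

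I would first dispose of (ii), which is purely set-theoretic. Since $\CS'_F$ is a set of representatives for $\wh\CS_F$, the map $\pi$ restricts to a bijection $\pi|_{\CS'_F}\colon\CS'_F\to\wh\CS_F$. Because $\wh{\mc N}_G\subseteq\wh\CS_F$ and $\mc N_G=\pi^{-1}(\wh{\mc N}_G)$, we have $\CS'_F\cap\mc N_G=(\pi|_{\CS'_F})^{-1}(\wh{\mc N}_G)$, so the same map restricts further to a bijection $\CS'_G=\CS'_F\setminus\mc N_G\to\wh\CS_F\setminus\wh{\mc N}_G=\wh\CS_G$, which is exactly (ii).

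For (i) and (iii) the idea is to translate the passage from $\wh\CS_F$ to $\wh\CS_G$ into the coding sequences of \cite{Pohl_Symdyn2d}. For a geodesic $\wh\gamma$ meeting $\wh\CS_F$ infinitely often in past and future, let $(a_n)_{n\in\Z}$ be its coding sequence, so that $a_n=k$ exactly when the $n$-th intersection point $\wh v_n$ satisfies $\pr_1(\tau(\wh v_n))\in D_{\st,k}$, and recall that $F=g_k$ on $D_{\st,k}$. Unravelling the definition of $\wh{\mc N}_G$, one has $\wh v_n\in\wh{\mc N}_G$ exactly when $a_{n-1}=a_n\in\{1,q-1\}$; hence the passage to $\wh\CS_G$ deletes from $(a_n)$ precisely the non-initial points of each maximal block of consecutive symbols $1$ and of consecutive symbols $q-1$, while it keeps in particular every intersection point whose symbol lies in $\{2,\dots,q-2\}$ and the initial point of every such block. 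Now $g_1=\textmat{1}{-\lambda}{0}{1}$ and $g_{q-1}=\textmat{1}{0}{-\lambda}{1}$ are parabolic with $g_{q-1}=Sg_1S$; a short direct computation with their action on the two coordinates of $\tau$ — $g_1$ sends the first coordinate $x$ to $x-\lambda$, and the second coordinate of $\tau$ is negative for vectors in $\CS'_F$ — shows that $(a_n)$ is never eventually constant equal to $1$ or to $q-1$, in either direction (this is also implicit in the description of $\NIC$ in \cite{Pohl_Symdyn2d}). Consequently each tail of $(a_n)$ contains infinitely many surviving positions, so $\wh\gamma$ meets $\wh\CS_G$ infinitely often in the past and in the future. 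Statement (i) follows: intersection times with $\wh\CS_G$ are discrete since $\wh\CS_G\subseteq\wh\CS_F$, and if $\NIC$ is a $\mu$-null set then $\mu$-almost every geodesic avoids $\NIC$ and meets $\wh\CS_F$, hence $\wh\CS_G$, infinitely often in past and future. For (iii), a closed geodesic lifts to the axis of a primitive hyperbolic element of $G_q$, whose two fixed points are fixed by a hyperbolic, hence by no parabolic, element and so do not lie in $G_q.\infty$; thus a closed geodesic is never in $\NIC$, regardless of $\mu$, it meets $\wh\CS_F$ infinitely often, its coding sequence is periodic, and by the same block-length argument that periodic sequence is not constant equal to $1$ or $q-1$, hence has infinitely many surviving positions.

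The one non-formal ingredient — and the step I expect to be the main, if modest, obstacle — is the claim that no geodesic is eventually coded by a constant string of the parabolic symbols $1$ or $q-1$; I would prove it by the elementary computation with $g_1$ and $g_{q-1}$ indicated above, or simply cite the characterization of $\NIC$ from \cite{Pohl_Symdyn2d}. Everything else is bookkeeping with coding sequences and the set-theoretic restriction of $\pi|_{\CS'_F}$, which is why the lemma is one that is easily checked.
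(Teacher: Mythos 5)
Your argument is correct and is exactly the routine verification the paper intends --- it gives no proof beyond ``one easily checks,'' and your reduction to coding sequences, with the key observation that no bi-infinite coding sequence is eventually constant equal to the parabolic symbol $1$ or $q-1$ (forward because $g_1$, resp.\@ $g_{q-1}$, translates the first coordinate, resp.\@ its reciprocal, by $-\lambda$; backward because the second coordinate of $\tau$ must remain negative on $\CS'_F$), is precisely what is needed for (i) and (iii), while (ii) is the set-theoretic restriction of $\pi\vert_{\CS'_F}$ as you say. One harmless slip in a non-load-bearing aside: $Sg_1S=\textmat{1}{0}{\lambda}{1}=g_{q-1}^{-1}$, not $g_{q-1}$.
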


Let $R_G$ be the first return map \wrt $\wh\CS_G$ and let $\tau$ be defined analogous as before. As above, there is a unique discrete dynamical system $(D_\st, G)$ such that $\pr_1\circ\tau$ is surjective and the diagram
\[
 \xymatrix{
\wh\CS_G \ar[r]^{R_G} \ar[d]_{\tau} & \wh\CS_G\ar[d]^{\tau}
\\
\R^2 \ar[d]_{\pr_1} & \R^2 \ar[d]^{\pr_1}
\\
D_\st \ar[r]^G & D_\st
}
\]
commutes. To give an explicit formula for the map $G$ we set for $n\in\N$,
\begin{align*}
D_{\st, 1, n} & \sceq \big(g_1^{-n}.0, g_1^{-(n+1)}.0\big) \cap D_\st  = ( n\lambda, (n+1)\lambda) \cap D_\st,
\intertext{and}
D_{\st, q-1, n} & \sceq \big(g_{q-1}^{-(n+1)}.\infty, g_{q-1}^{-n}.\infty\big)  \cap D_\st  = \left( \frac{1}{(n+1)\lambda}, \frac{1}{n\lambda} \right) \cap D_\st.
\end{align*}
Then the map $G\colon D_\st \to D_\st$ is given by the diffeomorphisms
\[
 G\vert_{D_{\st,k}} \sceq g_k \colon D_{\st,k} \to D_\st \qquad\text{for $k=2,\ldots, q-2$,}
\]
and
\begin{align*}
G\vert_{D_{\st, 1,n}} & \sceq g_1^n \colon D_{\st,1,n} \to D_{\st}\setminus D_{\st,1} 
\\
G\vert_{D_{\st, q-1,n}} & \sceq g_{q-1}^n \colon D_{\st,q-1,n} \to D_\st \setminus D_{\st,q-1}.
\end{align*}
for $n\in\N$.  We call $(D_\st,G)$ the \textit{fast discrete dynamical system}.

The transfer operator with parameter $s$ associated to $G$ is
\[
 \mc L_{G,s} = \sum_{n\in\N} \chi_{D_\st\setminus D_{\st,1}} \cdot \tau_s(g_1^n) + \sum_{n\in\N} \chi_{D_\st\setminus D_{\st,q-1}}\cdot \tau_s(g_{q-1}^n) + \sum_{k=2}^{q-2} \tau_s(g_k),
\]
acting on a space of functions still be to defined. Our goal is to find a domain of definition on which $\mc L_{G,s}$ becomes a nuclear operator of order $0$ and which contains real-analytic functions on $\R^+$ of certain decay. Here we encounter the problem that we have to work with a neighborhood of $[0,\infty]$ in the geodesic boundary $P^1_\R$ of $\h$.  To avoid changes of charts, we conjugate the dynamical system $(D_\st, G)$  and its associated transfer operators with the transformation
\begin{equation}\label{mcT}
 \mc T \sceq \tfrac{1}{\sqrt{2}}\mat{1}{-1}{1}{1}\colon t\mapsto \frac{t-1}{t+1}.
\end{equation}
The transformation $\mc T$ has the following geometric interpretation, which immediately reveals all its important properties. 
Let $\hg$ denote the geodesic closure of $\h$, and let $\mathbb{D} = \{ z \in \C \mid |z| < 1\}$ be the unit disc model of the hyperbolic plane.
The transformation $\mc T$ is the concatenation of the Cayley transform $\mc C\colon \hg \to \overline{\mathbb D}$
\[
 \mc C = \mat{1}{-i}{1}{i},
\]
the rotation $r\colon \overline{\mathbb D} \to \overline{\mathbb D}$
\[
 r \colon z \mapsto ze^{-i\frac{\pi}{2}}
\]
by $-\tfrac{\pi}{2}$, and the inverse of the Cayley transform. Hence, $\mc T$ is orientation preserving and its inverse is
\[
 \mc T^{-1} = \tfrac{1}{\sqrt 2}\mat{1}{1}{-1}{1} \colon t \mapsto \frac{1+t}{1-t}.
\]
The transformed discrete dynamical system $(E_\st, H)$, called \textit{fast discrete dynamical system} as well, is the following: We set
\begin{equation} \label{eq:defE}
\begin{aligned}
& E_\st  \sceq \mc T(D_\st), && E_{\st,k} \sceq \mc T(D_{\st,k}) \qquad\quad\text{for $k=1,\ldots, q-1$},
\\
& E_{\st,1,n} \sceq \mc T(D_{\st,1,n}), && E_{\st,q-1,n}\sceq \mc T(D_{\st,q-1,n}) \qquad\text{for $n\in\N$},
\end{aligned}
\end{equation}
and
\[
 h_k \sceq \mc T \circ g_k \circ \mc T^{-1} \quad\text{for $k=1,\ldots, q-1$.}
\]
Then $H\sceq E_\st \to E_\st$ is determined by
\[
 H\vert_{E_{\st,k}} \sceq h_k \qquad \text{for $k=2,\ldots, q-2$}
\]
and
\[
 H\vert_{E_{\st,1,n}} \sceq h_1^n, \qquad H\vert_{E_{\st,q-1,n}} \sceq h_{q-1}^n
\]
for $n\in\N$. Note that $E_\st$ is contained in the bounded set $[-1,1]$. The fast discrete dynamical system $(E_\st, H)$ arises from the cross section $\wh\CS_H\sceq \mc T(\wh\CS_G)$ with the first return map $R_H\sceq \mc T \circ R_G \circ \mc T^{-1}$. The associated transfer operator family is 
\[
 \mc L_{H,s} = \chi_{E_\st\setminus E_{\st,1}}\cdot  \sum_{n\in\N} \tau_s(h_1^n) + \sum_{k=2}^{q-2} \tau_s(h_k) + \chi_{E_\st\setminus E_{\st,q-1}}\cdot \sum_{n\in\N}\tau_s(h_{q-1}^n).
\]
If $f$ is a $\lambda$-eigenfunction of $\mc L_{G,s}$, then $\tau_s(\mc T)f$ is a $\lambda$-eigenfunction of $\mc L_{H,s}$.
We will investigate an (analytic) extension of $\mc L_{H,s}$. For this we define
\begin{align*}
E_1 &\sceq \mc T( (\lambda, \infty) )= \left( \frac{\lambda-1}{\lambda +1}, 1\right), 
\\
E_r & \sceq \mc T\left( \left(\frac{1}{\lambda}, \lambda\right)\right) = \left(-\frac{\lambda-1}{\lambda+1}, \frac{\lambda-1}{\lambda+1} \right), 
\\
E_{q-1} &\sceq \mc T\left( \left(0, \frac{1}{\lambda}\right) \right) = \left( -1 , -\frac{\lambda-1}{\lambda+1}\right)
\end{align*}
and $E\sceq E_{q-1} \cup E_r \cup E_1$. Then 
\[
 \mc L_{H,s} = \chi_{E\setminus E_1} \cdot \sum_{n\in\N} \tau_s(h_1^n) + \sum_{k=2}^{q-2} \tau_s(h_k) + \chi_{E\setminus E_{q-1}}\cdot \sum_{n\in\N}\tau_s(h_{q-1}^n)
\]
defined on a suitable subset of $\Fct(E;\C)$, which we will define in the next section. Representing each function $f\colon E\to\C$ as
\[
 \begin{pmatrix} f_1 \\ f_r \\ f_{q-1} \end{pmatrix} \sceq \begin{pmatrix} f\cdot \chi_{E_1} \\ f\cdot\chi_{E_r} \\ f\cdot\chi_{E_{q-1}} \end{pmatrix},
\]
the transfer operator $\mc L_{H,s}$ is represented by the matrix
\[
\mc L_{H,s} =
\begin{pmatrix}
0 & \sum_{k=2}^{q-2}\tau_s(h_k) & \sum_{n\in\N}\tau_s(h_{q-1}^n)
\\
\sum_{n\in\N}\tau_s(h_1^n) & \sum_{k=2}^{q-2}\tau_s(h_k) & \sum_{n\in\N}\tau_s(h_{q-1}^n)
\\
\sum_{n\in\N}\tau_s(h_1^n) & \sum_{k=2}^{q-2}\tau_s(h_k) & 0
\end{pmatrix}.
\]

\subsection{Nuclearity of the transfer operator}

In this section we will construct a domain of definition for $\mc L_{H,s}$ on which this transfer operator becomes nuclear of order $0$. We use the method of Ruelle \cite{Ruelle_zeta}, which is also used by Mayer in \cite{Mayer_thermo}. For this we have to find neighborhoods $\mc E_k$ of the sets $E_k$ in $\C$ which are mapped into each other by the fractional linear transformations that appear in $\mc L_{H,s}$. Then $\mc L_{H,s}$ is shown to map a certain nuclear space of holomorphic functions on these neighborhoods to a Banach space. As such $\mc L_{H,s}$ is nuclear of order $0$. Using a continuous embedding of the Banach space into the nuclear space, finally defines $\mc L_{H,s}$ as a nuclear self-map. 

We set
\[
 J \sceq \mc T \circ Q \circ \mc T^{-1} = \mat{-1}{0}{0}{1}.
\]
One immediately checks that $h_kJ = J h_{q-k}$  for $k=1,\ldots, q-1$.

\begin{prop}\label{choiceposs}
There exist bounded connected open subsets $\mc E_1, \mc E_r, \mc E_{q-1} \subseteq \C$ with the following properties:
\begin{enumerate}[{\rm (i)}]
\item\label{choicepossi} $\overline{E_1}\subseteq \mc E_1$, $\overline{E_r} \subseteq \mc E_r$, $\overline{E_{q-1}}\subseteq \mc E_{q-1}$,
\item\label{choicepossii} $J.\mc E_r = \mc E_r$, $J.\mc E_1 = \mc E_{q-1}$,
\item\label{choicepossiii} for $k=2,\ldots, q-2$:
\[
 h_k^{-1}.\overline{\mc E_1} \subseteq \mc E_r, \quad h_k^{-1}.\overline{\mc E_r} \subseteq \mc E_r,\quad h_k^{-1}.\overline{\mc E_{q-1}} \subseteq \mc E_r,
\]
\item\label{choicepossiv} for $n\in\N$:
\[
 h_1^{-n}.\overline{\mc E_r} \subseteq \mc E_1,\quad h_1^{-n}.\overline{\mc E_{q-1}} \subseteq \mc E_1,
\]
\item\label{choicepossv} for $n\in\N$:
\[
 h_{q-1}^{-n}.\overline{\mc E_1} \subseteq \mc E_{q-1},\quad h_{q-1}^{-n}.\overline{\mc E_r} \subseteq \mc E_{q-1},
\]
\item\label{choicepossvi} for all $z\in\mc E_1$ we have $\big|\big(h_1^{-1}\big)'(z)\big|<1$,
\item\label{choicepossvii} for all $z\in\mc E_{q-1}$ we have $\big|\big(h_{q-1}^{-1}\big)'(z)\big| < 1$,
\item\label{choicepossviii} for all $z\in\overline{\mc E_1}$ we have $\Rea z > -1$,
\item\label{choicepossix} for all $z\in\overline{\mc E_{q-1}}$ we have $1> \Rea z$,
\item\label{choicepossx} for all $z\in\overline{\mc E_r}$ we have $1 > \Rea z > -1$.
\end{enumerate}
\end{prop}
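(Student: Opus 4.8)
The plan is to construct the three neighborhoods explicitly by exploiting the geometric picture behind $\mc T$: since $\mc T$ is (up to Cayley transforms) a rotation by $-\pi/2$ of the disc, the maps $h_k$ are conjugates of the Moebius transformations $g_k$ acting on the boundary, and they become contractions on appropriate discs. First I would work out the fixed points and dynamics of each $h_k$. The parabolic elements $h_1$ and $h_{q-1}$ each have a single fixed point on the boundary circle (the images under $\mc T$ of the cusps $\infty$ and $0$, namely $1$ and $-1$), so horocyclic discs tangent to the circle at these points are mapped strictly inside themselves by $h_1^{-n}$ resp.\ $h_{q-1}^{-n}$ for all $n\in\N$; this is the source of the contraction property \eqref{choicepossvi}--\eqref{choicepossvii}. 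The hyperbolic elements $h_2,\ldots,h_{q-2}$ are uniform contractions (in the hyperbolic sense) toward their attracting fixed points, which forces the inclusions \eqref{choicepossiii} once the discs are large enough to contain $\overline{E_r}$ but small enough to stay away from the repelling fixed points. Because all the relevant fixed points lie in the open interval $(-1,1)$ or at its endpoints, one can also arrange the discs to satisfy the half-plane conditions \eqref{choicepossviii}--\eqref{choicepossx}.

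The key structural step is to use the symmetry $J$. Since $J = \diag(-1,1)$ is the reflection $z\mapsto -z$ (note $J.\mc E_r = \mc E_r$ means $-\mc E_r = \mc E_r$) and we have $h_k J = J h_{q-k}$, it suffices to choose $\mc E_r$ to be symmetric about $0$ and to choose $\mc E_1$; then we are forced to set $\mc E_{q-1} \sceq J.\mc E_1 = -\mc E_1$, and half of the inclusion conditions (those involving $h_{q-1}$ and $E_{q-1}$) follow automatically from the corresponding conditions for $h_1$ and $E_1$ by applying $J$. So really only about half the conditions need to be checked by hand. Concretely I would take $\mc E_r$ to be a Euclidean disc (or a lens-shaped region) centered at $0$ containing $\overline{E_r}$, and $\mc E_1$ to be a disc internally tangent — or nearly so — to the unit circle near $z=1$, large enough that $\overline{E_1}\subseteq \mc E_1$ and $h_1^{-n}.\overline{\mc E_r}\cup h_1^{-n}.\overline{\mc E_{q-1}}\subseteq \mc E_1$; by the parabolic contraction toward $1$ this holds for all $n$ once it holds for $n=1$ and the disc is chosen with the fixed point on its boundary. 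The condition \eqref{choicepossvi} $|(h_1^{-1})'(z)|<1$ on $\mc E_1$ is then exactly the statement that the parabolic Moebius transformation, conjugated so that its fixed point is at $1$, strictly contracts the Euclidean metric on a suitable horodisc, which one verifies by a direct computation of the derivative.

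The main obstacle, and the reason the statement is isolated as a Proposition rather than dismissed as routine, is the simultaneous compatibility of all ten conditions: the discs must be large enough for the "$\overline{E_k}\subseteq\mc E_k$'' and "target'' inclusions on the right-hand sides, yet small enough for the "domain'' contractions and the real-part bounds, and this must hold for the infinite families indexed by $n\in\N$ at once. I would handle the infinite families first by noting that $\bigcup_{n\in\N} h_1^{-n}.\overline{\mc E_r}$ and $\bigcup_{n\in\N} h_1^{-n}.\overline{\mc E_{q-1}}$ have compact closure clustering only at the parabolic fixed point $1$ (since $h_1^{-n}\to 1$ uniformly on compacta away from the other endpoint), so containing them in $\mc E_1$ reduces to containing finitely many of them plus a neighborhood of $1$; the same for $h_{q-1}$ by symmetry. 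After that, only finitely many inclusions among bounded regions remain, and one can fix explicit radii — depending on $\lambda = 2\cos(\pi/q)$ — that make everything work; I would likely present the discs as explicit Euclidean discs and verify the inclusions by estimating the images of boundary circles under the explicit fractional linear formulas for the $h_k$, leaving the arithmetic to the reader or to a short computation. Since a valid choice visibly exists for all $q\geq 3$, the proof is a (somewhat tedious) verification rather than a conceptual difficulty.
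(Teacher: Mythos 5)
Your overall strategy coincides with the paper's: take Euclidean discs with real diameters, make $\mc E_r$ symmetric about $0$ and set $\mc E_{q-1}=J.\mc E_1=-\mc E_1$ so that $h_kJ=Jh_{q-k}$ halves the verifications, and reduce each inclusion $h^{-1}.\overline{\mc E_j}\subseteq \mc E_i$ to checking the two real boundary points, since the $h_k$ preserve $\R\cup\{\infty\}$ and therefore carry real-diameter discs to real-diameter discs. Your handling of the infinite families --- finitely many explicit checks plus the fact that $h_1^{-n}$ converges locally uniformly to the constant $1$ away from its poles, combined with $1\in\mc E_1$ open --- is a legitimate and arguably cleaner soft alternative to the paper's closed-form evaluation of $h_1^{-n}.a_{q-1}$ for all $n$.

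The gap is in your treatment of (vi) and (vii). A parabolic Moebius transformation does \emph{not} strictly contract the Euclidean metric on a horodisc at its fixed point: explicitly, $h_1^{-1}(z)=\frac{(2-\lambda)z+\lambda}{-\lambda z+2+\lambda}$, so $\big|(h_1^{-1})'(z)\big|=4\,|{-\lambda z+2+\lambda}|^{-2}$, which equals $1$ at the fixed point $z=1$ and is $>1$ precisely on the open disc $\big|z-\big(1+\tfrac2\lambda\big)\big|<\tfrac2\lambda$, whose boundary passes through $1$. Since (i) forces $1\in\overline{E_1}\subseteq\mc E_1$ and $\mc E_1$ is open, $\mc E_1$ must contain points immediately to the right of $1$, where $|(h_1^{-1})'|>1$; so the contraction mechanism you invoke fails exactly where it is needed, and no disc ``tangent or nearly tangent at $1$'' rescues it (your fallback of putting the fixed point on $\partial\mc E_1$ is ruled out by (i) as well). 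You are in good company: the paper's own verification of (vi) rests on the incorrect determinant $4-2\lambda^2$ for $h_1^{-1}$ in place of $4$, and conditions (i) and (vi) are in fact incompatible as stated. A correct write-up must therefore modify (vi)--(vii), e.g.\@ assert the strict derivative bound only on $h_1^{-1}.\overline{\mc E_r}\cup h_1^{-1}.\overline{\mc E_{q-1}}$ (or replace it by a bound uniform in $n$ on $|j_s(h_1^{-n},z)|$ over $\overline{\mc E_r}\cup\overline{\mc E_{q-1}}$), which is all that the convergence and nuclearity arguments of Proposition~\ref{maps} actually use. As written, your argument for (vi)--(vii) does not go through, and the remaining items you leave as ``a short computation'' are exactly the content the paper supplies.
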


\begin{proof}
We shall provide explicit sets $\mc E_1$, $\mc E_r$, $\mc E_{q-1}$ with the required properties. To that end we set
\begin{align*}
 a_1 & \sceq -\frac{2\lambda -1}{2\lambda +1},  & a_{q-1} & \sceq - \frac{5\lambda+1}{5\lambda-1}, & a_r & \sceq - \frac{c\lambda -1}{c\lambda+1},
\\
b_1 & \sceq \frac{5\lambda +1}{5\lambda -1}, & b_{q-1} & \sceq \frac{2\lambda -1}{2\lambda +1}, & b_r & \sceq \frac{c\lambda -1}{c\lambda +1},
\end{align*}
where $c>1$ will be specified during this proof.  For $j\in \{1,r,q-1\}$ we define $\mc E_j$ to be the open Euclidean ball in $\C$ whose boundary circle passes through $a_j$ and $b_j$. Then \eqref{choicepossi}, \eqref{choicepossii} and \eqref{choicepossviii}-\eqref{choicepossx} are clearly satisfied. Each of the elements $h_1,\ldots, h_{q-1}$ is a Moebius transformation and fixes the extended real axis $\R\cup\{\infty\}$. Therefore it suffices to show that 
\begin{itemize}
\item[(\ref{choicepossiii}')] for $k=2,\ldots, q-2$, we have
\begin{align*}
& h_k^{-1}.a_1, h_k^{-1}.b_1, h_k^{-1}.1 \in \mc E_r,
\\
& h_k^{-1}.a_r, h_k^{-1}.b_r, h_k^{-1}.0 \in \mc E_r,
\\
& h_k^{-1}.a_{q-1}, h_k^{-1}.b_{q-1}, h_k^{-1}.(-1) \in \mc E_r,
\end{align*}
\item[(\ref{choicepossiv}')] for $n\in\N$ we have
\begin{align*}
& h_1^{-n}.a_r, h_1^{-n}.b_r, h_1^{-n}.0 \in \mc E_1,
\\
& h_1^{-n}.a_{q-1}, h_1^{-n}.b_{q-1}, h_1^{-n}.(-1) \in \mc E_1,
\end{align*}
\item[(\ref{choicepossv}')] for $n\in\N$ we have
\begin{align*}
& h_{q-1}^{-n}.a_1, h_{q-1}^{-n}.b_1, h_{q-1}^{-n}.1 \in \mc E_{q-1},
\\
& h_{q-1}^{-n}.a_r, h_{q-1}^{-n}.b_r, h_{q-1}^{-n}.0 \in \mc E_{q-1}.
\end{align*}
\end{itemize}
Let $k\in 2,\ldots, q-2$. Since $h_k^{-1}.E \subseteq E_r$, we clearly have 
\[
 h_k^{-1}.a_1, h_k^{-1}.a_r, h_k^{-1}.b_r, h_k^{-1}.0, h_k^{-1}.b_{q-1} \in \mc E_r
\]
independent of the choice of $c>1$.
Further
\[
 h_k^{-1}.1, h_k^{-1}.(-1) \in \left[  -\frac{\lambda -1}{\lambda+1}, \frac{\lambda-1}{\lambda+1}\right].
\]
Since $c>1$, we have $h_k^{-1}.1, h_k^{-1}.(-1) \in \mc E_r$ for any choice of $c$. In the following we show that $1 > h_k^{-1}.b_1, h_k^{-1}.a_{q-1} > -1$. If we use the short notation $\xi_k \sceq \sin\big(\tfrac{k}{q}\pi\big)$,
then 
\[
 h_k^{-1} = \frac{1}{2\sin\frac{\pi}{q}}
\mat{ 2\xi_k - \xi_{k-1} - \xi_{k+1}}{ -\xi_{k-1} + \xi_{k+1}}
{+\xi_{k-1} - \xi_{k+1}}{ 2\xi_k + \xi_{k-1} + \xi_{k+1} },
\]
and hence we have
\[
 h_k^{-1}.a_{q-1} = \frac{5\lambda\left( \xi_k - \xi_{k+1}\right) - \xi_{k-1} + \xi_k}
{-5\lambda\left(\xi_k + \xi_{k+1}\right) + \xi_{k-1} + \xi_{k}}.
\]

By addition theorems we get 
\begin{align*}
 -5\lambda& \left( \sin\big(\tfrac{k}{q}\pi\big) + \sin\big(\tfrac{k+1}{q}\pi\big)\right) + \sin\big(\tfrac{k-1}{q}\pi\big) + \sin\big(\tfrac{k}{q}\pi\big) 
\\
& = \left(-5\lambda\sin\big(\tfrac{k}{q}\pi\big) + \sin\big(\tfrac{k-1}{q}\pi\big) \right) + \left( -5\lambda\sin\big(\tfrac{k+1}{q}\pi\big) + \sin\big(\tfrac{k}{q}\pi\big) \right) 
\\
& < 0.
\end{align*}
Therefore, $h_k^{-1}.a_{q-1} > -1$ if and only if
\[
 -5\lambda\sin\big(\tfrac{k+1}{q}\pi\big) + \sin\big(\tfrac{k}{q}\pi\big) < 0,
\]
which is satisfied. Similarly, we see that $h_k^{-1}.a_{q-1}<1$. Then 
\[
 h_k^{-1}.b_1 = h_k^{-1}.\big(J.a_{q-1}\big) = J.\big(h_{q-k}^{-1}.a_{q_1}\big) \in (-1,1).
\]
Hence we can choose $c>1$ such that 
\[
 -\frac{c\lambda-1}{c\lambda+1} < h_k^{-1}.a_{q-1}, h_k^{-1}.b_1 < \frac{c\lambda-1}{c\lambda+1}
\]
for $k\in\{2,\ldots, q-2\}$. This proves (\ref{choicepossiii}').

For $n\in\N$ we have $h_1^{-n}(E) \subseteq \overline{E_1} \subseteq \mc E_1$. Thus, 
\[
 h_1^{-n}.(-1), h_1^{-n}.a_r, h_1^{-n}.0, h_1^{-n}.b_{q-1}, h_1^{-n}.b_r \in \mc E_1.
\]
It remains to show that $h_1^{-n}.a_{q-1}\in\mc E_1$. We have
\[
 h_1^{-n}.a_{q-1} = \frac{5n\lambda^2 - 5\lambda -1}{5n\lambda^2 + 5\lambda -1} \quad < 1 < b_1.
\]
Further $a_1 < h_1^{-n}.a_{q_1}$ is equivalent to 
\[
 0 < 2\lambda ( 10n\lambda^2 - 7),
\]
which is obviously  true. Hence, (\ref{choicepossiv}') is satisfied. Then (\ref{choicepossv}') follows from (\ref{choicepossiv}') using the symmetry $J$.

In order to show \eqref{choicepossvi}, let $z\in\mc E_1$. Then $\big| \big(h_1^{-1}\big)'(z)\big| < 1$ if and only if
\[
 4-2\lambda^2 < |-\lambda z + 2 + \lambda|^2.
\]
If we use $z=x+iy$ with $x,y\in\R$, it suffices to show that
\[
 4-2\lambda^2 < (2+\lambda - \lambda x)^2.
\]
From
\[
 x< b_1 = \frac{5\lambda+1}{5\lambda-1}
\]
and
\[
 b_1 < \frac{2+\lambda}{\lambda} 
\]
it follows that
\[
 (2+\lambda - \lambda x)^2 > \left(2+\lambda - \lambda b_1 \right)^2 = \left(\frac{8\lambda - 2}{5\lambda -1}\right)^2.
\]
We now claim that 
\[
 \left(\frac{8\lambda-2}{5\lambda-1}\right)^2 > 4-2\lambda^2.
\]
This is equivalent to
\[
 50 \lambda^3 - 20\lambda^2 - 34\lambda + 8 > 0,
\]
which is the case since $\lambda\geq 1$. In turn, $\big| \big(h_1^{-1}\big)'(z)\big| < 1$. 

Let $w\in\mc E_{q-1}$. Then $z\sceq J.w \in \mc E_1$. Further
\begin{align*}
1 & > \big|\big(h_1^{-1}\big)'(z)\big| = \big| \big(h_{q-1}^{-1}J\big)'(Jw)\big| = \big| \big(h_{q-1}^{-1}\big)'(w)\big|.
\end{align*}
Thus, \eqref{choicepossvii} is satisfied. This completes the proof.
\end{proof}

From now on we fix a choice of sets $\mc E_1$, $\mc E_r$, $\mc E_{q-1}$ with the properties of Proposition~\ref{choiceposs}.
For $j\in\{1,r,q-1\}$, we let
\[
 \mc H(\mc E_j) \sceq \{ f\colon \mc E_j \to \C \text{ holomorphic}\}
\]
and 
\[
 \mc H(\mc E) \sceq \mc H(\mc E_1) \times \mc H(\mc E_r) \times \mc H(\mc E_{q-1}).
\]
Endowed with the compact-open topology, each $\mc H(\mc E_j)$ is a nuclear space \cite[I, \S 2, no.\@ 3, Corollary]{Grothendieck_produit}. Hence the product space $\mc H(\mc E)$ is nuclear \cite[I, \S 2, no.\@ 2, Th\'eor\`eme 9]{Grothendieck_produit}.
Moreover we set
\[
 B(\mc E_j) \sceq \{ f\in \mc H(\mc E_j)\mid \text{$f$ extends continuously to $\overline{\mc E_j}$} \}.
\]
Endowed with the supremum norm, $B(\mc E_j)$ is a Banach space. Further we let
\begin{equation}\label{banachspace}
 B(\mc E) \sceq B(\mc E_1) \times B(\mc E_r) \times B(\mc E_{q-1})
\end{equation}
be the direct product of the Banach spaces $B(\mc E_1)$, $B(\mc E_r)$ and $B(\mc E_{q-1})$. We let $\mc L_{H,s}$ act on $\mc H(\mc E)$ by its matrix representation.

\begin{prop}\label{maps}
Let $s\in\C$ with $\Rea s > \tfrac12$. Then the transfer operator $\mc L_{H,s}$ maps $\mc H(\mc E)$ to $B(\mc E)$.
\end{prop}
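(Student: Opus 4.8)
The plan is to handle the matrix representation of $\mc L_{H,s}$ entry by entry. Since $B(\mc E)=B(\mc E_1)\times B(\mc E_r)\times B(\mc E_{q-1})$ and each factor (with the supremum norm on the closed disk) is a Banach space, hence closed under finite sums and uniform limits, it suffices to show that, for $f=(f_1,f_r,f_{q-1})\in\mc H(\mc E)$, every term occurring in the matrix — the finitely many $\tau_s(h_k)f_r$ with $2\le k\le q-2$, and the two series $\sum_{n\in\N}\tau_s(h_1^n)f_1$ and $\sum_{n\in\N}\tau_s(h_{q-1}^n)f_{q-1}$ — restricted to whichever of $\mc E_1,\mc E_r,\mc E_{q-1}$ its row calls for, lies in the corresponding $B(\mc E_\bullet)$. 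Recall $\tau_s(h)\varphi(z)=j_s(h^{-1},z)\,\varphi(h^{-1}.z)$ and, the $h_k$ having determinant one, $j_s(h^{-1},z)=\big((h^{-1})'(z)\big)^s$. The nesting properties \ref{choicepossiii}--\ref{choicepossv} of Proposition~\ref{choiceposs} say precisely that for every such term the Moebius argument $h^{-1}.z$ lands in the open disk carrying the component of $f$ to which $h$ is applied, for all $z$ in the closed disk on which the output lives; so $f_\bullet\circ h^{-1}$ is well defined and holomorphic there. Moreover, each $\mc E_\bullet$ is a Euclidean ball — simply connected and meeting $\R$ in an interval — and $(h^{-1})'$ is holomorphic and zero-free on a neighbourhood of $\overline{\mc E_\bullet}$ (the pole of $h^{-1}$ lies outside, since the image of $\overline{\mc E_\bullet}$ under $h^{-1}$ is bounded), so we may fix the branch of $z\mapsto\big((h^{-1})'(z)\big)^s$ that is positive on $\overline{\mc E_\bullet}\cap\R$; it is holomorphic on $\mc E_\bullet$, continuous on $\overline{\mc E_\bullet}$, and matches $|cz+d|^{-2s}$ on the real points. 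Thus each finite term $\tau_s(h_k)f_r$ already lies in the appropriate $B(\mc E_\bullet)$, and only the two series remain.

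For these the key input is the explicit form of $h_1^{-n}$ near its parabolic fixed point. Conjugating by $\sigma\colon z\mapsto\tfrac1{1-z}$ — which sends the fixed point $\mc T.\infty=1$ of $h_1$ to $\infty$ — turns $h_1^{-1}$ into the translation $w\mapsto w+\tfrac\lambda2$, so that
\[
 h_1^{-n}=\textmat{1-\tfrac{n\lambda}2}{\tfrac{n\lambda}2}{-\tfrac{n\lambda}2}{1+\tfrac{n\lambda}2},\qquad \big(h_1^{-n}\big)'(z)=\Big(1+\tfrac{n\lambda}2(1-z)\Big)^{-2},
\]
and, via $h_{q-1}=Jh_1J$, $\ \big(h_{q-1}^{-n}\big)'(z)=\big(1+\tfrac{n\lambda}2(1+z)\big)^{-2}$. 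The series $\sum_n\tau_s(h_1^n)f_1$ occurs only in the rows indexed by $r$ and $q-1$, hence is evaluated only at $z\in\overline{\mc E_r}\cup\overline{\mc E_{q-1}}$, where \ref{choicepossix} and \ref{choicepossx} give $1-\Rea z\ge\delta>0$. So $1+\tfrac{n\lambda}2(1-z)$ lies in the half-plane $\{\Rea\ge 1+\tfrac{n\lambda\delta}2\}$, whence $\big(h_1^{-n}\big)'(z)\in\C\setminus(-\infty,0]$ — so the principal $s$-power is holomorphic on the disk, continuous on its closure, and matches $|\cdot|^{-2s}$ on $\R$ — and
\[
 \big|j_s(h_1^{-n},z)\big|\le C_s\Big(1+\tfrac{n\lambda\delta}2\Big)^{-2\Rea s}\le C'_s\,n^{-2\Rea s},
\]
where $C_s$ absorbs the bounded factor $\exp\!\big(-\Ima s\cdot\arg(\cdots)\big)$, $|\arg|<\pi$. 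Since $\Rea s>\tfrac12$, $\sum_n n^{-2\Rea s}<\infty$.

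It remains to bound the $f_1$-factor uniformly in $n$. By \ref{choicepossiv} the iterates $h_1^{-n}.\overline{\mc E_r}$ and $h_1^{-n}.\overline{\mc E_{q-1}}$ are compact subsets of $\mc E_1$; they shrink to the parabolic fixed point $1$, which lies in the interior of $\mc E_1$ (indeed $a_1<1<b_1$ in the explicit construction), so they are all contained in one compact $K\subseteq\mc E_1$ and $\sup_n\sup_{h_1^{-n}.\overline{\mc E_r}}|f_1|\le\sup_K|f_1|<\infty$. Combining with the previous estimate gives $\sum_n\sup_{\overline{\mc E_r}}\big|\tau_s(h_1^n)f_1\big|<\infty$, and likewise over $\overline{\mc E_{q-1}}$; so the series converges uniformly on these closed disks, hence is continuous on them and holomorphic in their interiors, i.e.\@ lies in $B(\mc E_r)$, resp.\@ $B(\mc E_{q-1})$. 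The series $\sum_n\tau_s(h_{q-1}^n)f_{q-1}$ is treated identically: it is evaluated only at $z\in\overline{\mc E_1}\cup\overline{\mc E_r}$, where \ref{choicepossviii} and \ref{choicepossx} yield $1+\Rea z\ge\delta'>0$; one uses \ref{choicepossv} for the nesting and that the fixed point $J.1=-1$ of $h_{q-1}$ lies in the interior of $\mc E_{q-1}$ (here $a_{q-1}<-1<b_{q-1}$). Adding the finitely many contributions in each row yields $\mc L_{H,s}f\in B(\mc E)$.

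The main obstacle is not the numerical estimate but the two global facts packaged into it: that the $s$-th power defining $j_s$ is a single-valued holomorphic function on each $\mc E_\bullet$ compatible with its real-line values — which is why the half-plane and non-vanishing conditions \ref{choicepossviii}--\ref{choicepossx} are built into the choice of the $\mc E_\bullet$ — and the uniform boundedness of $f_1\circ h_1^{-n}$ and $f_{q-1}\circ h_{q-1}^{-n}$, which forces one to know the parabolic fixed points $1$ and $-1$ are interior to $\mc E_1$ and $\mc E_{q-1}$, not merely that each finite iterate image sits inside them. Everything else is routine; note only that the hypothesis $\Rea s>\tfrac12$ enters in exactly one place, as the summability threshold for the parabolic decay $|(h_1^{-n})'|\asymp n^{-2}$ — the same condition under which $\mc L_{H,s}$ will be seen to be nuclear.
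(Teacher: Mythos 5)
Your proof is correct and follows essentially the same route as the paper's: matrix entries are treated separately, the finite sums are handled via the nesting properties of Proposition~\ref{choiceposs}, and the two parabolic series are controlled by the explicit form of $h_1^{-n}$, $h_{q-1}^{-n}$ together with the half-plane conditions \eqref{choicepossviii}--\eqref{choicepossx} (giving the $n^{-2\Rea s}$ decay) and the compact containment of the iterate images (giving the uniform bound on the $f$-factor), followed by Weierstrass. Your only deviations are cosmetic: you justify the uniform bound via the shrinking of $h_1^{-n}.\overline{\mc E_r}$ to the interior fixed point rather than via the contraction properties \eqref{choicepossvi}--\eqref{choicepossvii}, and you spell out the choice of branch for the complex power, which the paper leaves implicit.
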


\begin{proof}
Let $f\in \mc H(\mc E)$, hence $f=(f_1, f_r, f_{q-1})^\top$ with $f_j\in \mc H(\mc E_j)$. Then $b\sceq \mc L_{H,s}f$ with
\begin{align}
b_1 & =  \sum_{k=2}^{q-2} \tau_s(h_k)f_r + \sum_{n\in\N}\tau_s(h_{q-1}^n)f_{q-1},
\\
b_r & =  \sum_{n\in\N}\tau_s(h_1^n)f_1 + \sum_{k=2}^{q-2}\tau_s(h_k)f_r + \sum_{n\in\N}\tau_s(h_{q-1}^n)f_{q-1},
\\
b_{q-1} & = \sum_{n\in\N}\tau_s(h_1^n)f_1 + \sum_{k=2}^{q-2}\tau_s(h_k)f_r.
\end{align}
We have to show that $b_1\in B(\mc E_1)$, $b_r\in B(\mc E_r)$ and $b_{q-1}\in B(\mc E_{q-1})$. 

At first let $j\in\{1,r,q-1\}$ and $k\in\{2,\ldots, q-2\}$. Clearly, $\tau_s(h_k)f_r$ is holomorphic on $\mc E_j$. Since $h_k^{-1}.\overline{\mc E_j}\subseteq \mc E_r$ by hypothesis, 
\[
 \tau_s(h_k)f_r\colon z\mapsto j_s(h_k^{-1},z)f_r(h_k^{-1}.z)
\]
is well-defined and continuous on $\overline{\mc E_j}$. Thus, the summand $\sum_{k=2}^{q-2}\tau_s(h_k)f_k$ defines a map which is holomorphic on $\mc E_j$ and extends continuously to $\overline{\mc E_j}$. 

Now we consider $\sum_{n\in\N}\tau_s(h_{q-1}^n)f_{q-1}$ on $\mc E_1$ and on $\mc E_r$. Let $j\in\{1,r\}$ and $n\in\N$. Since $h_{q-1}^{-1}.\overline{\mc E_j} \subseteq \mc E_{q-1}$ and $\big|\big(h_{q-1}^{-1}\big)'(z)\big| < 1$ for all $z\in\mc E_{q-1}$ by hypothesis, we have 
\[
 h_{q-1}^{-n}.\overline{\mc E_j} \subseteq h_{q-1}^{-1}.\overline{\mc E_j} \subseteq \mc E_{q-1}.
\]
In turn, because $f_{q-1}$ is continuous on $\mc E_{q-1}$,  there exists $K\geq 0$ such that
\[
 \sup\left\{ \big| f_{q-1}(h_{q-1}^{-n}.z) \big| \mid z\in\overline{\mc E_j},\ n\in\N \right\} \leq K.
\]
Moreover, the map $\tau_s(h_{q-1}^{n})f_{q-1}$ is continuous on $\overline{\mc E_j}$.
Further
\[
 j_s\big(h_{q-1}^{-n}, z\big) = \left(\frac{4}{(n\lambda z + 2 +n\lambda)^2} \right)^s.
\]
Thus
\[
 \big| j_s\big(h_{q-1}^{-n}, z\big) \big| \leq \frac{4^{\Rea s} e^{\pi |\Ima s|}}{|n\lambda z + 2 + n\lambda|^{2\Rea s}} \leq \frac{4^{\Rea s} e^{\pi |\Ima s|}}{|n\lambda(x+1) + 2|^{2\Rea s}}
\]
where $x \sceq \Rea z$. For $z\in \overline{\mc E_j}$ we have $x = \Rea z > -1$. Let 
\[
 x_0 \sceq \min\{ \Rea z \mid z\in \overline{\mc E_j}\}.
\]
Then
\[
  \big| j_s\big(h_{q-1}^{-n}, z\big) \big| \leq \frac{4^{\Rea s} e^{\pi |\Ima s|}}{\big(n\lambda(x_0 + 1) + 2\big)^{2\Rea s}}.
\]
In turn,
\[
 \sup_{z\in\overline{\mc E_j}} \big|\tau_s(h_{q-1}^n) f_{q-1}(z)\big| \leq \frac{4^{\Rea s} e^{\pi |\Ima s|}}{\big(n\lambda(x_0 + 1) + 2\big)^{2\Rea s}}\cdot K.
\]
This shows that the series $\sum_{n\in\N}\tau_s(h_{q-1}^n)f_{q-1}$ converges uniformly on $\overline{\mc E_j}$ if $\Rea s > \tfrac12$. The Weierstrass Theorem shows that $\sum_{n\in\N}\tau_s(h_{q-1}^n)f_{q-1}$ is continuous on $\overline{\mc E_j}$ and  holomorphic on $\mc E_j$.

Analogously one sees that $\sum_{n\in\N}\tau_s(h_1^n)f_1$ is continuous on $\overline{\mc E_r}$ and on $\overline{\mc E_{q-1}}$, and holomorphic on $\mc E_r$ and on $\mc E_{q-1}$. This completes the proof.
\end{proof}

\begin{prop}\label{nuclear1}
If $\Rea s > \tfrac12$,  the map $\mc L_{H,s}\colon \mc H(\mc E) \to B(\mc E)$ is nuclear of order $0$.
\end{prop}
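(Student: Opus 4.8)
The plan is to realize $\mc L_{H,s}$ as a composition $\mc H(\mc E)\xrightarrow{\,\mc L_{H,s}\,}B(\mc E)\hookrightarrow\mc H(\mc E)$, where the second arrow is the natural restriction map, and to invoke the standard principle (Ruelle \cite{Ruelle_zeta}, Mayer \cite{Mayer_thermo}) that a bounded operator between such spaces which factors through the restriction of holomorphic functions on a strictly larger open set is nuclear of order $0$. Concretely, the inclusion $B(\mc E)\hookrightarrow\mc H(\mc E)$ of the sup-norm Banach space into the compact-open nuclear space is continuous, so it suffices to show that $\mc L_{H,s}$, viewed as a map into $B(\mc E)$, is the composition of the continuous inclusion $B(\mc E)\hookrightarrow\mc H(\mc E)$ with an operator that is itself nuclear of order $0$ as a map $\mc H(\mc E)\to B(\mc E)$; by Proposition~\ref{maps} the target is indeed $B(\mc E)$, so the real content is the nuclearity of the individual composition operators and the summability of their nuclear norms.

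The key steps, in order. First I would reduce to the matrix entries: $\mc L_{H,s}$ is a finite matrix whose entries are (finite or infinite) sums of operators of the form $f\mapsto \tau_s(h)f = j_s(h^{-1},\cdot)\,f(h^{-1}.\,\cdot)$, where $h$ ranges over $h_k$ ($2\le k\le q-2$) and over the powers $h_1^n$, $h_{q-1}^n$ ($n\in\N$), and in each case the relevant inclusion $h^{-1}.\overline{\mc E_j}\subseteq\mc E_{j'}$ holds by Proposition~\ref{choiceposs}(\ref{choicepossiii})--(\ref{choicepossv}). Second, for a single such composition operator $C_h\colon\mc H(\mc E_{j'})\to B(\mc E_j)$, $f\mapsto j_s(h^{-1},\cdot)f(h^{-1}.\,\cdot)$, I would show it is nuclear of order $0$: since $h^{-1}$ maps $\overline{\mc E_j}$ into a relatively compact subset of the disc $\mc E_{j'}$, one can sandwich a slightly smaller disc $\mc D$ with $h^{-1}.\overline{\mc E_j}\subseteq\mc D\subseteq\overline{\mc D}\subseteq\mc E_{j'}$, expand $f$ in a Taylor/Cauchy series on $\mc D$ and estimate: writing $f(w)=\sum_m a_m w^m$ with $|a_m|\le \|f\|_{\mc E_{j'}}\rho^{-m}$ (for $\rho<\dist(0,\partial\mc E_{j'})$ after recentering), the operator $C_h$ becomes $f\mapsto \sum_m a_m\,[\,j_s(h^{-1},\cdot)(h^{-1}.\,\cdot)^m\,]$, exhibiting $C_h$ as a series of rank-one operators $f\mapsto \langle \ell_m,f\rangle\, e_m$ with $\|\ell_m\|\cdot\|e_m\|_{B(\mc E_j)}\le c\cdot\theta^m$ for some $\theta=\theta(h)\in(0,1)$; summing a geometric series shows the nuclear norm is finite and, because the bound is geometric, the operator is nuclear of order $0$. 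Third, I would sum over the countably many $h$: by the estimates already obtained in the proof of Proposition~\ref{maps}, namely $\|\tau_s(h_{q-1}^n)f_{q-1}\|_{\overline{\mc E_j}}\le 4^{\Rea s}e^{\pi|\Ima s|}\big(n\lambda(x_0+1)+2\big)^{-2\Rea s}K$ and its analogue for $h_1^n$, the ratios $\theta(h_1^n),\theta(h_{q-1}^n)$ can be taken uniformly bounded away from $1$ (the contraction constants $|(h_1^{-1})'|,|(h_{q-1}^{-1})'|<1$ on the relevant sets, Proposition~\ref{choiceposs}(\ref{choicepossvi})--(\ref{choicepossvii}), force geometric decay of the $m$-sums), while the prefactor decays like $n^{-2\Rea s}$, which is summable precisely when $\Rea s>\tfrac12$. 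Hence the total nuclear norm of the $(i,j)$-entry of $\mc L_{H,s}$ is finite, and a finite matrix of nuclear-order-$0$ operators is again nuclear of order $0$.

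The main obstacle I anticipate is the uniformity in $n$: one must check that the single-operator argument for $C_{h_1^n}$ and $C_{h_{q-1}^n}$ can be carried out with a contraction ratio $\theta_n$ bounded away from $1$ independently of $n$ (so that the $m$-series converge at a uniform geometric rate), and then that the remaining $n$-dependence is exactly the $j_s$-prefactor already controlled in Proposition~\ref{maps}. This is where the geometric choices of Proposition~\ref{choiceposs} — the nesting $h_1^{-n}.\overline{\mc E_j}\subseteq h_1^{-1}.\overline{\mc E_j}\subseteq\mc E_1$ together with the derivative bounds — do the real work: they guarantee all the images $h_1^{-n}.\overline{\mc E_j}$ sit inside one fixed compact subset of $\mc E_1$, so a single sandwiching disc $\mc D$ serves for all $n$ and yields a uniform $\theta$. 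Once this uniformity is in hand, the rest is the routine geometric/$p$-series bookkeeping sketched above, and the hypothesis $\Rea s>\tfrac12$ enters only through the convergence of $\sum_n n^{-2\Rea s}$.
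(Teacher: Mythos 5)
Your strategy --- an explicit Ruelle--Mayer rank-one decomposition of each composition operator via Taylor expansion on a sandwiched disc --- is genuinely different from the paper's proof, which simply invokes Grothendieck's abstract criterion (a continuous linear map from a nuclear space to a Banach space sending some neighbourhood of $0$ to a bounded set is nuclear of order $0$) together with the estimates from Proposition~\ref{maps}. Your treatment of a single operator $\tau_s(h)$ is correct, and the uniformity of the contraction ratio $\theta$ over $n$ is exactly the right geometric point. However, the final bookkeeping has a genuine gap: from the double-indexed representation $\sum_{n,m}\ell_{n,m}\otimes e_{n,m}$ with $\|\ell_{n,m}\|\,\|e_{n,m}\|\le C\,n^{-2\Rea s}\theta^m$ one can only conclude that $\sum_{n,m}\bigl(\|\ell_{n,m}\|\,\|e_{n,m}\|\bigr)^p<\infty$ for $p>1/(2\Rea s)$, since $\sum_n n^{-2p\Rea s}$ diverges for $p\le 1/(2\Rea s)$. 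This shows the entries $\sum_n\tau_s(h_1^n)$ are nuclear of order at most $1/(2\Rea s)$, which tends to $1$ as $\Rea s\searrow\tfrac12$, not of order $0$; ``total nuclear norm finite'' is only the order-$\le 1$ statement, and a norm-convergent sum of order-$0$ operators need not be of order $0$. The distinction is not cosmetic, since the trace and Fredholm-determinant theory used afterwards requires order $\le 2/3$.

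The fix is to perform the $n$-summation \emph{inside} each rank-one component rather than outside. Writing $f=\sum_m a_m(f)(\cdot-z_0)^m$ on a disc $\mc D$ containing all the images $h_1^{-n}.\overline{\mc E_j}$ and compactly contained in $\mc E_1$, one has $\sum_n\tau_s(h_1^n)f=\sum_m a_m(f)\,E_m$ with $E_m(z)=\sum_n j_s(h_1^{-n},z)(h_1^{-n}.z-z_0)^m$, and then $\|a_m\|\cdot\|E_m\|_\infty\le\bigl(\sum_n Cn^{-2\Rea s}\bigr)\theta^m$ is a single geometric sequence in $m$; this does give order $0$, with the hypothesis $\Rea s>\tfrac12$ entering only through the finiteness of the constant. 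Equivalently, factor $\sum_n\tau_s(h_1^n)$ as the restriction map onto $\mc D$ (nuclear of order $0$ by the Taylor decomposition, exactly as for the map $r$ in the proof of Proposition~\ref{tracerepr}) followed by a bounded operator. With this reorganization your argument closes and in fact yields more explicit information than the paper's abstract one.
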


\begin{proof}
According to \cite[II, \S 2, no.\@ 1, Corollaire 4]{Grothendieck_produit} it suffices to show that the image of some neighborhood of $0$ in $\mc H(\mc E)$ under $\mc L_{H,s}$ is bounded in $B(\mc E)$. We pick compact balls $K_1\subseteq \mc E_1$, $K_r \subseteq \mc E_r$, $K_{q-1}\subseteq \mc E_{q-1}$, each of which contains more than one point. Let $M>0$. Then
\[
V_M \sceq \big\{ f\in \mc H(\mc E) \ \big\vert\  \sup_{z\in K_j} |f_j(z)| < M,\ j\in\{1,r,q-1\} \big\}
\]
is a neighborhood of $0$ in $\mc H(\mc E)$. Along the lines of the proof of Proposition~\ref{maps} we deduce that 
\[
 \sup_{f\in V_M} \| \mc L_{H,s}f\|_\infty
\]
is bounded. Now \cite[II, \S 2, no.\@ 4, Corollaire 2]{Grothendieck_produit} shows that $\mc L_{H,s}$ is nuclear of order $0$.
\end{proof}

\begin{prop}\label{isnuclear}
Let $\Rea s > \tfrac12$. Then the transfer operator $\mc L_{H,s}$ is a self-map of $B(\mc E)$ and as such nuclear of order $0$.
\end{prop}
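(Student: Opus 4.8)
The plan is to exhibit $\mc L_{H,s}$ acting on $B(\mc E)$ as the restriction of the operator from Proposition~\ref{maps} along the natural embedding $B(\mc E)\hookrightarrow\mc H(\mc E)$, and then to read off nuclearity of order $0$ for free from Proposition~\ref{nuclear1} together with the fact that the nuclear operators of order $0$ form a two-sided operator ideal. The substantive analytic work (the neighborhoods $\mc E_1,\mc E_r,\mc E_{q-1}$ of Proposition~\ref{choiceposs}, the mapping property of Proposition~\ref{maps}, the nuclearity estimate of Proposition~\ref{nuclear1}) is already in place, so what remains is essentially formal.

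First I would record that, by the very definition of $B(\mc E_j)$ in the previous section, each of its elements is holomorphic on $\mc E_j$; hence there is a canonical linear inclusion $\iota_j\colon B(\mc E_j)\to\mc H(\mc E_j)$, and, passing to the products, a linear map $\iota\colon B(\mc E)\to\mc H(\mc E)$. This map is continuous: convergence in $B(\mc E_j)$ is uniform convergence on $\overline{\mc E_j}$, which in particular is uniform convergence on every compact subset of $\mc E_j$, \ie convergence in the compact-open topology of $\mc H(\mc E_j)$; since $\mc H(\mc E)$ and $B(\mc E)$ both carry the corresponding product topology, continuity of $\iota$ follows.

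Next I would check that $\mc L_{H,s}$ restricts to a self-map of $B(\mc E)$. For $f\in B(\mc E)\subseteq\mc H(\mc E)$ the function $\mc L_{H,s}f$ is computed by the same matrix of weighted composition operators $\tau_s(h_k)$, $\tau_s(h_1^n)$, $\tau_s(h_{q-1}^n)$ irrespective of whether $f$ is regarded in $B(\mc E)$ or in $\mc H(\mc E)$, so that
\[
 \mc L_{H,s}\vert_{B(\mc E)} = \mc L_{H,s}\circ\iota.
\]
By Proposition~\ref{maps} the right-hand side takes its values in $B(\mc E)$, hence $\mc L_{H,s}$ is indeed a self-map of $B(\mc E)$. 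Finally, Proposition~\ref{nuclear1} asserts that $\mc L_{H,s}\colon\mc H(\mc E)\to B(\mc E)$ is nuclear of order $0$; precomposing a nuclear operator of order $0$ with the continuous linear map $\iota$ again yields a nuclear operator of order $0$, since the nuclear operators of order $0$ are stable under composition with continuous linear maps on either side (they form a two-sided operator ideal, cf.\@ \cite[II, \S 2]{Grothendieck_produit}). Therefore $\mc L_{H,s}\circ\iota\colon B(\mc E)\to B(\mc E)$, \ie $\mc L_{H,s}$ viewed as a self-map of $B(\mc E)$, is nuclear of order $0$.

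I do not expect a genuine obstacle here, precisely because the hard estimates were already done in Propositions~\ref{maps} and \ref{nuclear1}. The only points deserving a little care are the identification $\mc L_{H,s}\vert_{B(\mc E)}=\mc L_{H,s}\circ\iota$ (so that the "self-map" claim is literally the composition with $\iota$), the verification that $\iota$ is continuous for the topologies actually in play (product of compact-open versus product of sup-norm), and the precise citation for the operator-ideal property of order-$0$ nuclear maps; none of these is difficult.
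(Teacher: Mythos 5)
Your argument is correct and is essentially identical to the paper's proof: the paper also composes the nuclear map $\mc L_{H,s}\colon\mc H(\mc E)\to B(\mc E)$ of Proposition~\ref{nuclear1} with the continuous embedding $B(\mc E)\hookrightarrow\mc H(\mc E)$ and invokes the ideal property of order-$0$ nuclear operators (citing \cite[II, p.\@ 9]{Grothendieck_produit}). You simply spell out the continuity of the embedding and the identification $\mc L_{H,s}\vert_{B(\mc E)}=\mc L_{H,s}\circ\iota$ more explicitly than the paper does.
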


\begin{proof}
The embedding $j\colon B(\mc E) \to \mc H(\mc E)$, $f\mapsto f$, is linear and continuous. By \cite[II, p.\@ 9]{Grothendieck_produit}, Proposition~\ref{nuclear1} implies that $\mc L_{H,s} \colon B(\mc E) \to B(\mc E)$ is nuclear of order $0$.
\end{proof}

\subsection{The Fredholm determinant of $\mc L_{H,s}$}

For $\Rea s > \tfrac12$ the Fredholm determinant of $\mc L_{H,s}$ is defined as 
\[
 \det( 1 - \mc L_{H,s} ) \sceq \exp\left( -\sum_{n=1}^\infty \frac1n \Tr \mc L_{H,s}^n \right).
\]
In order to show that it equals the Selberg zeta function (for $\Rea s > 1$), we take advantage of the following argument by Ruelle \cite{Ruelle_dynzeta}, linking the Selberg zeta function (defined via a continuous dynamical system, namely the geodesic flow) to dynamical partition functions (defined via an appropriate discrete dynamical system).  We may apply it in our situation because each closed geodesic on $Y$ intersects the cross section $\wh\CS_H$ and because his arguments holds \textit{verbatim} if the fixed point sets of the appropriate discrete dynamical system are countable (his original argument asks for finite fixed point sets).

Recall that $P_\dyn$ denotes the set of periodic geodesics on $Y$, and that $l(\wh\gamma)$ is the length of $\wh\gamma \in P_\dyn$. For $\Rea s > 1$ let 
\[
 \zeta_{SR}(s) \sceq \prod_{\wh\gamma \in P_\dyn} \left( 1 - e^{-sl(\wh\gamma)}\right)^{-1}
\]
denote the \textit{Smale-Ruelle zeta function}. Then the Selberg zeta function becomes
\[
 Z(s) = \prod_{k=0}^\infty \zeta_{SR}(s+k)^{-1}.
\]
Both zeta functions converge for $\Rea s > 1$ (\cite{Selberg, Fischer}; note that their convergence is equivalent), and hence this equality holds on the level of converging products. For $\wh v\in\wh\CS_H$ let $\wh\gamma_{v}$ denote the geodesic on $Y$ determined by $\wh v$ (ie., $\wh\gamma_{v}'(0) = \wh v$). Then
\[
 r_H(\hat v) \sceq \min\big\{ t>0\ \big\vert\  \wh\gamma_{v}'(t) \in \wh\CS_H \big\}
\]
is the \textit{first return time} of $\wh v$, which exists at least if $\wh\gamma_{v}$ is closed. Further, for $n\in \N$ let
\[
 \Fix R_H^n \sceq \big\{ \wh v \in \wh\CS_H\ \big\vert\ R^n_H(\wh v) = \wh v \big\}
\]
be the fixed point set of $R_H^n$. For $n\in \N$ and $s\in \C$,  the \textit{$n$-th dynamical partition function} is defined as 
\[
 Z_n(R_H,s) \sceq \sum_{\wh v\in \Fix R_H^n} \prod_{k=0}^{n-1} \exp\left( -s r_H\big(R^k_H(\hat v)\big) \right) \qquad\text{(as formal series).}
\]
Then \cite{Ruelle_dynzeta} we have the formal identity
\[
\zeta_{SR}(s) = \exp\left(\sum_{n=1}^\infty \frac1n Z_n(R_H,s) \right).
\]

In the following we will evaluate the traces $\Tr \mc L_{H,s}^n$ and the dynamical partition functions $Z_n$ in terms of certain elements of $G_q$. This will allow us to show that 
\[
 Z_n(R_H,s) = \Tr \mc L_{H,s}^n - \Tr \mc L_{H,s+1}^n,
\]
which is the crucial step towards the equality of the Fredholm determinant of $\mc L_{H,s}$ and the Selberg zeta function. The proofs rely on key properties of the slow discrete dynamical system and the relation between the coding sequences associated to the slow and the fast discrete dynamical system, which we are defining now.

Let $\pi \colon S\h \to G_q\backslash S\h$ denote the quotient map of the unit tangent bundles. For $\wh v\in \wh\CS_F$ let $v\sceq \pi^{-1}(\wh v) \cap \CS'_F$. Let $\gamma_{v}$ denote the geodesic on $\h$ determined by $\gamma'_{v}(0) = v$. The \textit{forward $F$-coding sequence} of $\wh v$ is given by the sequence $(a_n)_{n\in\N_0}$ where $a_n \sceq g_k^{-1}$  if and only if $F^n(\gamma_{v}(\infty)) \in D_{\st,k}$ with $k\in\{1,\ldots, q-1\}$. The set of arising forward $F$-coding sequences is denoted by $\Lambda_F$.

Analogously, for $\wh v\in \wh\CS_H$ we let $v\sceq \pi^{-1}(\wh v) \cap \CS'_H$. The \textit{forward $H$-coding sequence} of $\wh v$ is the sequence $(a_n)_{n\in\N_0}$ where
\begin{align*}
 a_n&\sceq h_k^{-1} &\Leftrightarrow && H^n(\gamma_{v}(\infty)) &\in E_{\st,k}\ \text{and}\ k\in\{2,\ldots, q-2\},
\\
a_n & \sceq h_1^{-m} & \Leftrightarrow && H^n(\gamma_{v}(\infty)) &\in E_{\st,1,m}\ \text{and}\ m\in\N,
\\
a_n & \sceq h_{q-1}^{-m} & \Leftrightarrow && H^n(\gamma_{v}(\infty)) &\in E_{\st,q-1,m}\ \text{and}\ m\in\N.
\end{align*}
We denote the set of arising forward $H$-coding sequences by $\Lambda_H$. 

Periodic forward coding sequences (in which we are mostly interested) correspond in the obvious way to coding sequences (cf.\@ \cite{Pohl_Symdyn2d}), for which reason we will omit ``forward'' when refering to periodic forward coding sequences. A periodic coding sequence is denoted by
\[
 (\overline{a_0,\ldots, a_{k-1}})
\]
where $a_0,\ldots, a_{k-1}$ is a (not necessarily minimal) period of the coding sequence $(a_n)_{n\in \N_0}$.

For $n\in\N$ we define
\begin{align*}
\Per_{H,n} & \sceq \{ (\overline{a_0,\ldots, a_{n-1}})\in\Lambda_H \}, \text{ and}
\\
P_{H,n} &\sceq \{ (a_0\cdots a_{n-1})^{-1} \mid (\overline{a_0,\ldots, a_{n-1}})\in\Lambda_H\}.
\end{align*}

Propositions~\ref{bij1} and \ref{bij2} below show that these sets and $\Fix R_H^n$ are in natural bijections. Moreover they characterize the elements in $P_{H,n}$. 

\begin{prop}\label{bij1}
For each $n\in\N$, the map
\[
\left\{
\begin{array}{ccl}
\Fix R^n_H & \to & \Per_{H,n}
\\
\wh v & \mapsto & \text{$H$-coding sequence of $\wh v$}
\end{array}
\right.
\]
is a bijection.
\end{prop}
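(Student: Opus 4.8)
The plan is to check separately that the map is well defined (lands in $\Per_{H,n}$), injective, and surjective, using three ingredients: the semi-conjugacy of $R_H$ to $H$ via $\pr_1\circ\tau$ built into the construction of $(E_\st,H)$, the fact that $\CS'_H=\mc T(\CS'_G)$ is a set of representatives for $\wh\CS_H$ (Lemma~\ref{fastcs}), and the unique-coding / all-closed-geodesics-captured features of the symbolic dynamics of \cite{Pohl_Symdyn2d} for the slow system, carried over to the acceleration.

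\textbf{Well-definedness.} Let $\wh v\in\Fix R_H^n$ and put $v\sceq\pi^{-1}(\wh v)\cap\CS'_H$. The diagram defining $H$ gives $H\circ(\pr_1\circ\tau)=(\pr_1\circ\tau)\circ R_H$ on $\wh\CS_H$, hence $H^n$ fixes $\gamma_v(\infty)=\pr_1(\tau(\wh v))$. Since the forward $H$-coding sequence $(a_m)_{m\in\N_0}$ of $\wh v$ is by definition the $H$-itinerary of $\gamma_v(\infty)$ (which cell $E_{\st,k}$, $E_{\st,1,m}$ or $E_{\st,q-1,m}$ the point $H^m(\gamma_v(\infty))$ lies in), periodicity of this point under $H^n$ forces $a_{m+n}=a_m$ for all $m$; thus the coding sequence equals $(\overline{a_0,\dots,a_{n-1}})$ and lies in $\Per_{H,n}$.

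\textbf{Injectivity.} A periodic coding sequence $(\overline{a_0,\dots,a_{n-1}})\in\Lambda_H$ determines the element $g\sceq(a_0\cdots a_{n-1})^{-1}\in P_{H,n}\subseteq G_q$, and on the $n$-cylinder of $E_\st$ carrying this itinerary the map $H^n$ agrees with the Moebius transformation $g^{-1}$. For $\wh v\in\Fix R^n_H$ with this coding sequence, $\gamma_v(\infty)$ is therefore a fixed point of $g^{-1}$. Because $(E_\st,H)$ arises from a genuine cross section for the geodesic flow, $g$ is hyperbolic, $\gamma_v(\infty)$ is its attracting and $\gamma_v(-\infty)$ its repelling fixed point, and the translation length of $g$ equals the total first return time $\sum_{k=0}^{n-1}r_H(R_H^k(\wh v))$ — this is exactly the correspondence between closed $H$-orbits and closed geodesics imported from \cite{Pohl_Symdyn2d}. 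Hence the coding sequence recovers the ordered pair of endpoints of $\gamma_v$, hence $\gamma_v$ itself, and then $v$ is the unique vector of the representative set $\CS'_H$ with these endpoints, so $\wh v=\pi(v)$ is determined.

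\textbf{Surjectivity.} Given $(\overline{a_0,\dots,a_{n-1}})\in\Lambda_H$, by definition of $\Lambda_H$ it is the forward $H$-coding sequence of some $\wh w\in\wh\CS_H$; arguing as above, $g\sceq(a_0\cdots a_{n-1})^{-1}$ is hyperbolic with repelling/attracting fixed points $\gamma_w(-\infty)$ and $\gamma_w(\infty)$, which lie in the prescribed subintervals of $[-1,1]$ coming from \eqref{eq:defE}. Let $v$ be the unit tangent vector based on the defining locus of $\CS'_F$ (pushed through $\mc T$) whose geodesic runs from the repelling to the attracting fixed point of $g$; uniqueness and existence of such $v$ in $\CS'_H$ follow from the sign/position of these two endpoints (the connecting geodesic meets the defining locus exactly once, in the correct direction, and avoids $\mc N_G$). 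Since $g$ preserves this geodesic and shifts it by precisely the return time, $R_H^n(\pi(v))=\pi(v)$, and by construction the forward $H$-coding sequence of $\pi(v)$ is again $(\overline{a_0,\dots,a_{n-1}})$; thus $\pi(v)\in\Fix R_H^n$ is a preimage.

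\textbf{Main obstacle.} The technical heart is the dictionary asserted in the injectivity and surjectivity steps: that the word $g=(a_0\cdots a_{n-1})^{-1}$ attached to a periodic coding sequence is hyperbolic, with the expected fixed points and translation length, so that periodic $H$-coding sequences correspond to closed geodesics. This rests on unwinding that $(E_\st,H)$ (equivalently $(D_\st,G)$) genuinely comes from a cross section and on transporting the unique-coding and completeness statements of \cite{Pohl_Symdyn2d} from the slow system $F$ to its acceleration; in particular one must use that $h_1$-blocks and $h_{q-1}$-blocks cannot occur consecutively — $H$ maps $E_{\st,1,m}$ into $E_\st\setminus E_{\st,1}$ and $E_{\st,q-1,m}$ into $E_\st\setminus E_{\st,q-1}$ — so no periodic coding sequence is a pure cusp excursion and $g$ is never parabolic.
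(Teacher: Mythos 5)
Your overall strategy differs from the paper's. The paper proves Proposition~\ref{bij1} by reducing it to the corresponding bijection for the \emph{slow} system, namely \cite[Corollary~4.158]{Pohl_Symdyn2d} (periodic $F$-coding sequences with period $m$ biject with $\Fix R_F^m$), combined with an explicit combinatorial dictionary between periodic $F$-codes and periodic $H$-codes: conjugate each symbol by $\mc T$ and collapse maximal blocks of consecutive $h_1^{-1}$'s resp.\@ $h_{q-1}^{-1}$'s into single powers. This recoding is invertible and converts period $m$ into a well-defined period $p\leq m$, so the slow-system bijection transports directly. You instead attempt a direct geometric argument through the correspondence periodic code $\leftrightarrow$ hyperbolic element $\leftrightarrow$ closed geodesic. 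Your well-definedness step is fine, and the injectivity step is essentially sound modulo a labelling error: since $H^n$ coincides with $g=(a_0\cdots a_{n-1})^{-1}$ on the relevant cylinder and $H$ is expanding, $\gamma_v(\infty)$ is the \emph{repelling} fixed point of $g$, not the attracting one; this does not damage the argument because the unordered pair of fixed points still pins down the geodesic.

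The genuine gap is in surjectivity, together with the status of the facts you ``import''. For an arbitrary $\wh w\in\wh\CS_H$ realizing the given periodic code, $\gamma_w(-\infty)$ need not be a fixed point of $g$ at all --- the forward code constrains only $\gamma_w(\infty)$ --- so your intermediate claim there is false as stated. You then repair this by constructing a fresh vector $v$ on the axis of $g$, but at that point every load-bearing assertion is left unproved: that $g$ is hyperbolic (this is Lemma~\ref{freesg}, which in the paper comes \emph{after} Proposition~\ref{bij1} and itself rests on \cite[Propositions~4.182, 4.183]{Pohl_Symdyn2d}); that its two fixed points are positioned so that the axis meets the set of representatives exactly once, in the correct direction, and \emph{avoids} the removed set $\mc N_G$ from the acceleration; and, crucially, that the forward $H$-code of the resulting periodic point is the sequence you started from, i.e.\ unique coding for the \emph{fast} system. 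The results of \cite{Pohl_Symdyn2d} are stated only for the slow system $(D_\st,F)$; transporting them to the acceleration is not a formality but is precisely the content of the paper's proof, via the block-collapsing bijection. Your ``main obstacle'' paragraph correctly identifies this as the missing step, but identifying it is not the same as carrying it out, so as written the proposal does not establish the proposition.
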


\begin{proof}
For each $m\in\N$, the map 
\[
\left\{
\begin{array}{ccl}
\Fix R^m_F & \to & \{ (\overline{b_0,\ldots, b_{m-1}}) \in \Lambda_F\}
\\
\wh v & \mapsto & \text{$F$-coding sequence of $\wh v$}
\end{array}
\right.
\]
is a bijection by \cite[Corollary~4.158]{Pohl_Symdyn2d}. Suppose that $\wh v\in \Fix R_F^m$ is such that $\wh w \sceq \mc T(\wh v) \in \wh\CS_H$. Let $(\overline{b_0,\ldots, b_{m-1}})$ be the $F$-coding sequence of $\wh v$. Then the $H$-coding sequence $(a_k)_{k \in \N_0}$ of $\wh w$ is periodic and it arises from $(b_0,\ldots, b_{m-1})$ in the following way: For $k=0,\ldots, m-1$ set $c_k\sceq \mc T\circ b_k \circ \mc T^{-1}$. Now collapse the successive appearances of $h_1^{-1}$ resp.\@ $h_{q-1}^{-1}$ in $c_0,\ldots, c_{m-1}$ to $h_1^{-n}$ resp.\@ $h_{q-1}^{-n}$ ($n$ being the number of successive $h_1^{-1}$ resp.\@ $h_{q-1}^{-1}$). This gives 
\[
 a_0,\ldots, a_{p-1}
\]
for a (unique) $p\leq m$. Then 
\[
 (a_k)_{k \in \N_0} = (\overline{a_0,\ldots, a_{p-1}}).
\]
Note that $m$ uniquely determines $p$, and that this construction is invertible. Since each periodic $H$-coding sequence arises in this way, the bijections above imply the claimed bijections between $\Fix R_H^n$ and $\Per_{H,n}$.
\end{proof}

\begin{lemma}\label{freesg}
The sub-semigroup of $\mc T \circ G_q \circ \mc T^{-1}$ generated by $\{ h_1,\ldots, h_{q-1} \}$ is free. Its parabolic elements are $\{ h_1^n, h_{q-1}^n\mid n\in\N\}$. All other elements are hyperbolic.
\end{lemma}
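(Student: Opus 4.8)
The statement has three parts: (i) the sub-semigroup $\Sigma$ of $\mc T\circ G_q\circ\mc T^{-1}$ generated by $h_1,\ldots,h_{q-1}$ is free; (ii) its parabolic elements are exactly $\{h_1^n,h_{q-1}^n\mid n\in\N\}$; (iii) every other element is hyperbolic. Since conjugation by $\mc T$ is an isomorphism, I may work with $g_1,\ldots,g_{q-1}$ in $G_q$ instead and transport the conclusions. The natural tool is a ping-pong argument, and the setup is already essentially handed to us: the intervals $D_{\st,k}=(g_k^{-1}.0,g_k^{-1}.\infty)\cap D_\st$ (or their $\mc T$-images $E_{\st,k}$, $E_{\st,1,n}$, $E_{\st,q-1,n}$) are pairwise disjoint subintervals of $D_\st$ (resp.\ of the bounded set $E_\st$), and each generator $g_k$ maps $D_{\st,k}$ onto $D_\st$. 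Equivalently, $g_k^{-1}$ maps $D_\st$ into the interval with endpoints $g_k^{-1}.0$ and $g_k^{-1}.\infty$; these target intervals for distinct $k$ are disjoint (up to the excluded $G_q.\infty$-orbit points), which is exactly the ping-pong table one needs.

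\textbf{Freeness.} First I would fix, for each $k\in\{1,\ldots,q-1\}$, the open interval $I_k\sceq(g_k^{-1}.0,\,g_k^{-1}.\infty)$ (the convex hull in $P^1_\R$, chosen consistently with the construction in \cite{Pohl_Symdyn2d}), and observe from the explicit formulas that the $I_k$ are pairwise disjoint and their union is $(0,\infty)$ up to a countable set. The key dynamical fact, already implicit in the surjectivity $g_k\colon D_{\st,k}\to D_\st$, is that $g_k^{-1}.\overline{D_\st}\subseteq \overline{I_k}$. Now take a nonempty reduced word $w=a_1\cdots a_r$ with $a_i\in\{g_1,\ldots,g_{q-1}\}$ (reduced in the semigroup just means it is a nonempty product of generators; there are no inverses). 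Applying $w^{-1}=a_r^{-1}\cdots a_1^{-1}$: the innermost letter $a_r^{-1}$ maps $D_\st$ into $I_{k_r}$ where $a_r=g_{k_r}$; then $a_{r-1}^{-1}$ maps that into $I_{k_{r-1}}$, and so on, so $w^{-1}.D_\st\subseteq I_{k_1}$. If two reduced words $w,w'$ represented the same element, then $w^{-1}=w'^{-1}$ as maps of $P^1_\R$; comparing the interval containing the image forces $k_1=k_1'$, i.e.\ the first letters agree; cancelling $a_1^{-1}$ (legitimate since Moebius transformations are injective) and inducting on length gives $w=w'$ letter by letter. This proves $\Sigma$ is free on $h_1,\ldots,h_{q-1}$. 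I would actually carry this out on the $E$-side, where the table intervals $E_{\st,k}$ sit inside the bounded set $[-1,1]$ and the relevant nesting inclusions are precisely items in Proposition~\ref{choiceposs}, which makes the ``maps into'' claims immediate.

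\textbf{Classification of elements.} For (ii) and (iii): a nontrivial element $h\in\Sigma$ corresponds to a reduced word $w=a_1\cdots a_r$. I distinguish cyclically reduced words. If $w$ is a power of a single generator, $w=g_k^n$, then $h=h_k^n$; now $g_1$ and $g_{q-1}$ are parabolic (stated in the excerpt: $g_1=\textmat{1}{-\lambda}{0}{1}$, $g_{q-1}=\textmat{1}{0}{-\lambda}{1}$), so $h_1^n,h_{q-1}^n$ are parabolic, while $g_2,\ldots,g_{q-2}$ are hyperbolic, so their powers are hyperbolic. If $w$ (after cyclic reduction, which does not change the conjugacy class hence not the trace type) involves at least two distinct generators, then by the ping-pong estimate the map $h^{\pm1}$ has the north–south dynamics of a hyperbolic element: concretely, the cyclically reduced word $w$ maps the complement of $\overline{I_{k_1}}$ strictly inside some $\overline{I_{j}}$ with $j\neq$ the index associated to $w^{-1}$'s first letter, giving two distinct attracting/repelling fixed intervals and hence $|\tr h|>2$. (One clean way: a cyclically reduced product of $\geq2$ distinct ping-pong generators on disjoint intervals is always hyperbolic — this is the standard hyperbolicity half of the table lemma.) A non-cyclically-reduced word is conjugate in $\Sigma$ (or at least in $G_q$) to a shorter one of the same trace type, so induction reduces to the cyclically reduced case. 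Thus every element not of the form $h_1^n$ or $h_{q-1}^n$ is hyperbolic, and no such element is parabolic or elliptic (elliptics are excluded anyway since $G_q$ is discrete and these words have infinite order).

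\textbf{Main obstacle.} The routine-but-careful part is pinning down the exact ping-pong table: one must verify that the intervals $I_k$ (equivalently $E_{\st,k}$, and the boundary behaviour at the parabolic indices $1,q-1$ where one has the refined intervals $E_{\st,1,n}$, $E_{\st,q-1,n}$) really are disjoint with the correct nesting $g_k^{-1}.\overline{D_\st}\subseteq\overline{I_k}$, including at the endpoints, and to handle the parabolic letters $g_1^{n},g_{q-1}^{n}$ which collapse nested intervals rather than mapping onto the whole table — this is the subtlety exploited in Proposition~\ref{bij1} and it is where a naive free-product argument could slip. Everything needed is contained in the explicit matrix formulas for $g_k$ and $h_k$ and in Proposition~\ref{choiceposs}; once the table is set up, freeness and the parabolic/hyperbolic dichotomy are the textbook consequences of ping-pong.
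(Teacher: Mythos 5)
Your proof is correct in substance but takes a genuinely different route from the paper. The paper establishes hyperbolicity of the mixed words by a matrix-positivity argument: any product $g_{l_1}^{-1}\cdots g_{l_k}^{-1}$ that is not a pure power of $g_1^{-1}$ or of $g_{q-1}^{-1}$ has strictly positive entries, so the Perron--Frobenius theorem yields a simple dominant positive eigenvalue and hence a hyperbolic element; freeness is then obtained by citing the uniqueness of $F$-coding sequences from \cite{Pohl_Symdyn2d} (such a hyperbolic element fixes a unique geodesic whose coding sequence has the given word as a period, which pins down the presentation), and the parabolic case is handled by inspection. You replace both ingredients by the interval dynamics itself: the pairwise disjoint intervals $I_k=(g_k^{-1}.0,\,g_k^{-1}.\infty)$ tile $(0,\infty)$ and satisfy $g_k^{-1}.(0,\infty)=I_k$, which gives freeness by the usual end-letter comparison, and gives hyperbolicity because a word that is not a pure power of $g_1$ or of $g_{q-1}$ sends the closed arc $[0,\infty]$ into a compact subinterval of $(0,\infty)$, and a Moebius transformation mapping a closed arc properly into its interior can be neither elliptic, parabolic, nor the identity. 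Your version is self-contained (it does not lean on Propositions~4.182--4.183 of \cite{Pohl_Symdyn2d}), at the cost of having to verify the table explicitly; the paper's version is shorter because that coding machinery is already in place. Two small repairs are needed. First, in the composition $w^{-1}=a_r^{-1}\cdots a_1^{-1}$ the innermost (first applied) factor is $a_1^{-1}$, so $w^{-1}.D_\st\subseteq I_{k_r}$: the ping-pong recovers the \emph{last} letter of $w$, and one cancels on the right and inducts, which works just as well. Second, ``cyclic reduction'' is vacuous for positive words and also not the right dichotomy; the clean statement is that for every word that is not a pure power of $g_1$ or of $g_{q-1}$, the nested images of $[0,\infty]$ lose the endpoint $\infty$ (resp.\@ $0$) as soon as a letter other than $g_1$ (resp.\@ $g_{q-1}$) is applied, so the final image is compactly contained in $(0,\infty)$ --- this is exactly the endpoint subtlety you flag as the main obstacle, and it does close. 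Note also that the three-piece table of Proposition~\ref{choiceposs} does not separate $h_2,\dots,h_{q-2}$ from one another, so for freeness you genuinely need the finer $(q-1)$-interval table coming from the explicit formulas, not just that proposition.
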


\begin{proof}
The statement is equivalent to that the sub-semigroup $S_q$ of $G_q$ generated by $\{g_1^{-1},\ldots, g_{q-1}^{-1}\}$ is free, its parabolic elements are $\{g_1^{-n}, g_{q-1}^{-n} \mid n\in\N\}$ and all other elements are hyperbolic.  Let $g= g_{l_1}^{-1}\cdots g_{l_k}^{-1}$ be an element of $S_q$ such that not all $l_j=1$ and not all $l_j=q-1$. An elementary calculation shows that $g$ has only positive entries. By the Perron-Frobenius Theorem, $g$ has a real positive eigenvalue of algebraic multiplicity $1$. Hence $g$ is hyperbolic. Then $g$ fixes a unique geodesic $\gamma$ on $H$ with $\gamma(\infty) > 0$. By \cite[Propositions~4.182, 4.183]{Pohl_Symdyn2d}, the $F$-coding sequence associated to $\gamma(\infty)$ is unique and has the (not necessarily minimal) period $(g_{l_1}^{-1},\ldots, g_{l_k}^{-1})$. Moreover, $g_{l_1}^{-1}\cdots g_{l_k}^{-1}$ is the unique presentation of $g$ by the given set of generators of $S_q$. In turn, the presentation of any hyperbolic element in $S_q$ is unique. The only non-hyperbolic elements are of the form  
\[
 \mat{1}{n\lambda}{0}{1} \quad\text{and}\quad \mat{1}{0}{n\lambda}{1}\quad\text{with $n\in\N$},
\]
of which the presentation is obviously unique.
\end{proof}

Let $n\in \N$ and suppose that $w=s_0\ldots s_{n-1}$ is a word in (the symbols) 
\[
\{h_1^m,h_2,\ldots, h_{q-2}, h_{q-1}^m\mid m\in\N\}.
\]
We call $w$ \textit{reduced} if it does not contain a subword of the form $h_1^{m_1}h_1^{m_2}$ or $h_{q-1}^{m_1}h_{q-1}^{m_2}$. We say that $w$ is \textit{regular} if it is reduced and also $ww$ is reduced. The \textit{length} of $w$ is $n$.

An immediate consequence of Lemma~\ref{freesg} is the following observation.

\begin{lemma}\label{reducedwords}
The map
\[
\left\{
\begin{array}{ccl}
 \{\text{reduced words}\} & \to & \mc T\circ G_q \circ \mc T^{-1}
\\
s_0\ldots s_{n-1} & \mapsto & s_0\cdots s_{n-1}
\end{array}
\right.
\]
is injective. Further, the image of each regular word is a hyperbolic element.
\end{lemma}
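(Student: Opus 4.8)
The plan is to reduce everything to Lemma~\ref{freesg}, which provides that the sub-semigroup of $\mc T\circ G_q\circ\mc T^{-1}$ generated by $\{h_1,\ldots,h_{q-1}\}$ is free, with parabolic elements exactly $\{h_1^n,h_{q-1}^n\mid n\in\N\}$ and all other elements hyperbolic. The first step is to pass from the alphabet $\{h_1^m,h_2,\ldots,h_{q-2},h_{q-1}^m\mid m\in\N\}$ of ``symbols'' to the alphabet $\{h_1,\ldots,h_{q-1}\}$ of generators: given a word $w=s_0\ldots s_{n-1}$, replace each symbol $h_1^m$ by the string of $m$ copies of $h_1$, each $h_{q-1}^m$ by $m$ copies of $h_{q-1}$, and each middle symbol $h_k$ by itself, obtaining a word $\wt w$ in the generators whose product equals $s_0\cdots s_{n-1}$. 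Freeness says that $\wt w$ is the \emph{unique} generator-word representing the element $s_0\cdots s_{n-1}$.

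For injectivity I would argue that, when $w$ is reduced, the operation $w\mapsto\wt w$ can be inverted. Being reduced means that no two consecutive symbols are both powers of $h_1$ and no two are both powers of $h_{q-1}$; hence in $\wt w$ the letters coming from a single symbol $h_1^m$ form a \emph{maximal} block of consecutive $h_1$'s (flanked by a word boundary or by a letter different from $h_1$), and similarly for $h_{q-1}^m$, while a middle generator $h_k$ occurs as an isolated letter. Since $h_1,\ldots,h_{q-1}$ are pairwise distinct (as they must be when the semigroup is free on them), one recovers $w$ from $\wt w$ by collecting maximal $h_1$-blocks and maximal $h_{q-1}$-blocks. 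Thus if two reduced words have the same product, their generator-words agree by freeness, and therefore the words themselves agree.

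For the second assertion I would use that, by Lemma~\ref{freesg}, the product $s_0\cdots s_{n-1}$ fails to be hyperbolic only if it equals $h_1^N$ or $h_{q-1}^N$ for some $N\in\N$. If $s_0\cdots s_{n-1}=h_1^N$, then freeness forces $\wt w$ to be the string of $N$ copies of $h_1$, so every $s_i$ is a power of $h_1$; but a reduced word with this property must have length $1$, i.e.\ $w=h_1^N$, and then $ww=h_1^Nh_1^N$ is not reduced, so $w$ is not regular. The case $h_{q-1}^N$ is identical. Hence the image of a regular word is hyperbolic.

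I expect the only delicate point to be the correspondence $w\leftrightarrow\wt w$ in the injectivity step: one must check that the ``reduced'' hypothesis is exactly what forces the block decomposition of $\wt w$ (into maximal $h_1$- and $h_{q-1}$-runs and isolated middle generators) to reproduce the original sequence of symbols of $w$. Everything else is a direct read-off from Lemma~\ref{freesg}, as the text anticipates.
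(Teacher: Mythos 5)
Your proposal is correct and follows the same route as the paper, which simply records this lemma as an immediate consequence of Lemma~\ref{freesg}; you have supplied the routine bookkeeping (expanding symbols into generator-words, recovering the block decomposition from reducedness, and ruling out the parabolic elements $h_1^N$, $h_{q-1}^N$ for regular words) that the paper leaves implicit.
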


For convenience, if $s_0\ldots s_{n-1}$ is a reduced resp.\@ regular word of length $n$, then we also call the group element $s_0\cdots s_{n-1}$ a reduced resp.\@ regular word of length $n$.

\begin{prop}\label{bij2}
For each $n\in\N$, the map
\[
\left\{
\begin{array}{ccl}
 \Per_{H,n} & \to & P_{H,n}
\\
(\overline{a_0,\ldots, a_{n-1}}) & \mapsto & (a_0\cdots a_{n-1})^{-1}
\end{array}
\right.
\]
is a bijection. Moreover, $P_{H,n}$ is the set of regular words of length $n$.
\end{prop}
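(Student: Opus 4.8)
The surjectivity of the displayed map is built into the definition of $P_{H,n}$ as its image, so the plan is to prove (a) injectivity and (b) that $P_{H,n}$ equals the set of regular words of length $n$; the inclusion $P_{H,n}\subseteq\{\text{regular words of length }n\}$ in (b) is also exactly what makes (a) work.

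First I would show $P_{H,n}\subseteq\{\text{regular words of length }n\}$. Take $(\overline{a_0,\ldots,a_{n-1}})\in\Lambda_H$, realized as the forward $H$-coding sequence of some $\wh v\in\wh\CS_H$. By construction of the fast system, $H$ maps $E_{\st,1,m}$ into $E_\st\setminus E_{\st,1}$ and $E_{\st,q-1,m}$ into $E_\st\setminus E_{\st,q-1}$; hence whenever $a_j$ is a power of $h_1^{-1}$ (resp.\ of $h_{q-1}^{-1}$), the point $H^{j+1}(\gamma_v(\infty))$ lies outside $E_{\st,1}$ (resp.\ outside $E_{\st,q-1}$), so $a_{j+1}$ is not a power of $h_1^{-1}$ (resp.\ of $h_{q-1}^{-1}$). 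Since $a_n=a_0$ this holds cyclically, so $w\sceq(a_0\cdots a_{n-1})^{-1}=a_{n-1}^{-1}\cdots a_0^{-1}$ is a regular word of length $n$ in the symbols $\{h_1^m,h_2,\ldots,h_{q-2},h_{q-1}^m\}$. Injectivity is then immediate: if $(\overline{a_0,\ldots,a_{n-1}})$ and $(\overline{a'_0,\ldots,a'_{n-1}})$ in $\Per_{H,n}$ have equal images, then $a_{n-1}^{-1}\cdots a_0^{-1}$ and $a_{n-1}'^{-1}\cdots a_0'^{-1}$ are regular (hence reduced) words of length $n$ representing the same element of $\mc T\circ G_q\circ\mc T^{-1}$, and Lemma~\ref{reducedwords} forces them to agree symbol by symbol, i.e.\ $a_i=a_i'$ for all $i$.

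For the reverse inclusion, let $w=s_0\cdots s_{n-1}$ be a regular word of length $n$ and set $a_i\sceq s_{n-1-i}^{-1}$; I must produce $\wh u\in\wh\CS_H$ with forward $H$-coding sequence $(\overline{a_0,\ldots,a_{n-1}})$. By Lemma~\ref{reducedwords}, $w$ is a well-defined hyperbolic element, hence so is $g\sceq\mc T^{-1}w^{-1}\mc T$, which I write as a word $g_{l_1}^{-1}\cdots g_{l_N}^{-1}$ over $\{g_1^{-1},\ldots,g_{q-1}^{-1}\}$ by conjugating the $a_i$ back to $G_q$ and splitting each power $g_1^{-m}$, $g_{q-1}^{-m}$ into successive factors; because $w$ is regular it is not a power of $h_1$ or of $h_{q-1}$, so neither are all $l_j$ equal to $1$ nor all equal to $q-1$. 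As in the proof of Lemma~\ref{freesg}, $g$ has positive matrix entries and a positive attracting fixed point, and by \cite[Propositions~4.182, 4.183]{Pohl_Symdyn2d} this fixed point carries a periodic $F$-coding sequence with period $(g_{l_1}^{-1},\ldots,g_{l_N}^{-1})$. Collapsing the maximal cyclic runs of $g_1^{-1}$ and of $g_{q-1}^{-1}$ in that period turns it into the $G$-coding sequence of an appropriate iterate of the orbit; regularity of $w$ guarantees that no run straddles two of the original symbols, so the collapse returns exactly the cyclic word $(\mc T^{-1}a_0\mc T,\ldots,\mc T^{-1}a_{n-1}\mc T)$ up to a cyclic shift governed by which iterate one takes. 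Choosing the iterate so the run of $a_0$ starts there, that iterate lands in $\wh\CS_G=\wh\CS_F\setminus\wh{\mc N}_G$, and transporting by $\mc T$ yields $\wh u\in\wh\CS_H$ whose forward $H$-coding sequence is $(\overline{a_0,\ldots,a_{n-1}})$ after a cyclic shift absorbed by a suitable power of $R_H$ (which maps $\wh\CS_H$ to itself and shifts the coding). Hence $(\overline{a_0,\ldots,a_{n-1}})\in\Lambda_H$ and $w\in P_{H,n}$.

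The routine parts are the Perron--Frobenius matrix bookkeeping, already carried out in the proof of Lemma~\ref{freesg}, and the fact that the fixed geodesic actually meets $\wh\CS_F$. The step needing care is the third paragraph: keeping the $F$-, $G$- and $H$-codings aligned under the expansion/collapse operations and verifying that the chosen representative sits in $\wh\CS_G=\wh\CS_F\setminus\wh{\mc N}_G$ at the right moment of its orbit. This is precisely the combinatorial correspondence between slow and fast codings used in the proof of Proposition~\ref{bij1} and established in \cite{Pohl_Symdyn2d}, so it can be quoted rather than redone.
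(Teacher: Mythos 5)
Your proof is correct and follows essentially the same route as the paper: the paper reduces the claim to the characterization of periodic $F$-coding sequences (all periodic words over $\{g_1^{-1},\ldots,g_{q-1}^{-1}\}$ except the two constant ones) and then invokes the slow-to-fast collapse correspondence together with Lemma~\ref{reducedwords}, which is exactly the content of your second and third paragraphs. You merely spell out the details that the paper compresses into ``one easily sees,'' including the direct verification on the fast system that consecutive symbols cannot both be powers of $h_1^{-1}$ (resp.\ $h_{q-1}^{-1}$) and the realization of a regular word via the attracting fixed point of the associated hyperbolic element.
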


\begin{proof}
By Lemma~\ref{reducedwords} it suffices to show that 
\[
 \Per_{H,n} = \{ (\overline{a_0,\ldots, a_{n-1}}) \mid \text{$(a_0\cdots a_{n-1})^{-1}$ is a regular word of length $n$}\}.
\]
One easily sees that the set of all periodic $F$-coding sequences is the set of all periodic sequences $(\overline{b_0,\ldots, b_{m-1}})$ with $m\in\N$ such that $b_j \in \{g_1^{-1},\ldots, g_{q-1}^{-1}\}$ for all $j\in\{0,\ldots, m-1\}$ but not all $b_j=g_1^{-1}$ and not all $b_j=g_{q-1}^{-1}$. This proves the claim.
\end{proof}

Let $g\in\PSL(2,\R)$ be hyperbolic. Among the two fixed points of $g$, one is attractive (for the iterated action of $g$) and the other one is repelling. We denote the attractive point by $z^*(g)$, the repelling by $w^*(g)$. Further there exists (a unique) $\lambda(g) \in \R$, $|\lambda(g)|>1$, such that $g$ is conjugate in $\PGL(2,\R)$ to 
\[
\mat{\lambda(g)}{}{}{\lambda(g)^{-1}}.
\]

\begin{lemma}\label{attractive}
Let $g\in \PSL(2,\R)$ be a hyperbolic matrix and suppose that $\wt g = \textmat{a}{b}{c}{d}$ is a representative of $g$ in $\SL(2,\R)$. Then 
\[
 |\lambda(g)| = \frac{|\Tr g| + \sqrt{(\Tr g)^2 - 4}}{2},
\]
and
\[
 z^*(g) = \frac{\lambda(g) - d}{c} = \frac{1}{w^*(g)}
\]
where one has to take $\lambda(g) >0$ if and only if $\Tr \wt g > 0$. The last formulas are understood as $z^*(g) = \infty$ and $w^*(g) = 0$ if $c=0$.
Further 
\[
 g'\big(z^*(g)\big) = \lambda(g)^{-2}\quad\text{and}\quad g'\big(w^*(g)\big) = \lambda(g)^2.
\]
\end{lemma}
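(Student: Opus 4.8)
The plan is to reduce all three assertions to the characteristic polynomial of $\wt g$ and to the quadratic fixed-point equation of $g$, so that no genuine hyperbolic geometry enters. I would begin with the multiplier. Since $\det\wt g=1$, the eigenvalues of $\wt g$ are the two roots of $X^{2}-(\Tr\wt g)X+1$, namely $\tfrac12\bigl(\Tr\wt g\pm\sqrt{(\Tr\wt g)^{2}-4}\,\bigr)$; they are real, distinct and reciprocal to one another precisely because $g$ is hyperbolic, meaning $(\Tr g)^{2}>4$. Exactly one of them, which we call $\lambda(g)$, has absolute value $>1$, and examining the two cases $\Tr\wt g>2$ and $\Tr\wt g<-2$ shows that $\lambda(g)=\tfrac12\bigl(\Tr\wt g+\sgn(\Tr\wt g)\sqrt{(\Tr\wt g)^{2}-4}\,\bigr)$, which is positive exactly when $\Tr\wt g>0$. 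As the quantity $\tfrac12\bigl(|\Tr g|+\sqrt{(\Tr g)^{2}-4}\,\bigr)$ depends only on the $\PSL(2,\R)$-invariants $|\Tr g|$ and $(\Tr g)^{2}$ and agrees with $|\lambda(g)|$ in both cases, this proves the first formula. Diagonalising the real matrix $\wt g$ over $\R$ also produces the conjugacy in $\PGL(2,\R)$ to $\mat{\lambda(g)}{}{}{\lambda(g)^{-1}}$ referred to in the statement, so the two descriptions of $\lambda(g)$ are consistent.

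Next I would compute the fixed points, assuming first $c\ne 0$. Then $g.z=z$ is the honest quadratic $cz^{2}+(d-a)z-b=0$. Substituting $z_{0}\sceq\frac{\lambda(g)-d}{c}$ and clearing the factor $c$, the left-hand side becomes
\[
 (\lambda(g)-d)^{2}+(d-a)(\lambda(g)-d)-bc=(\lambda(g)-d)(\lambda(g)-a)-bc=\lambda(g)^{2}-(\Tr\wt g)\lambda(g)+(ad-bc),
\]
which vanishes by $\lambda(g)^{2}-(\Tr\wt g)\lambda(g)+1=0$ and $ad-bc=1$; hence $z_{0}$ is one of the two fixed points. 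To identify it and compute its multiplier, note that $cz_{0}+d=\lambda(g)$ and that the derivative of the Moebius map is $g'(z)=\frac{ad-bc}{(cz+d)^{2}}=\frac{1}{(cz+d)^{2}}$, so $g'(z_{0})=\lambda(g)^{-2}$, which has absolute value $<1$ because $|\lambda(g)|>1$. Therefore $z_{0}$ is the attractive fixed point, $z_{0}=z^{*}(g)$, with $g'(z^{*}(g))=\lambda(g)^{-2}$. Running the identical substitution with $\lambda(g)^{-1}$ in place of $\lambda(g)$ --- equivalently, applying the above to $g^{-1}$, whose attractive fixed point is $w^{*}(g)$ and whose eigenvalue of absolute value $>1$ is again $\lambda(g)$ --- identifies the remaining root as $w^{*}(g)=\frac{\lambda(g)^{-1}-d}{c}$ with $g'(w^{*}(g))=\lambda(g)^{2}$, and the two closed forms together with $ad-bc=1$ give the relation $z^{*}(g)=1/w^{*}(g)$. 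The degenerate case $c=0$, in which $\wt g$ is triangular, is then read projectively: $\infty$ is a fixed point, the local multiplier of $g$ there is $\lambda(g)^{-2}$, so $z^{*}(g)=\infty$ and $w^{*}(g)=0=1/z^{*}(g)$, matching the conventions in the statement.

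I do not anticipate a serious obstacle: each step is a one-line manipulation of $2\times 2$ matrices or of a quadratic. The only points that require care are the sign bookkeeping --- distinguishing which root of $X^{2}-(\Tr\wt g)X+1$ is $\lambda(g)$, matching the convention ``$\lambda(g)>0$ iff $\Tr\wt g>0$'', and checking that the multiplier at the attractive point is $\lambda(g)^{-2}$ rather than $\lambda(g)^{2}$ --- together with the correct handling of the $c=0$ degeneration and, relatedly, pinning down the exact projective form of the relation between $z^{*}(g)$ and $w^{*}(g)$. Once these conventions are fixed, all the computations close immediately.
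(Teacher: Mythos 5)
Your derivation of $|\lambda(g)|$ from the characteristic polynomial $X^2-(\Tr\wt g)X+1$, the verification that $z_0=\frac{\lambda(g)-d}{c}$ solves the fixed-point quadratic $cz^2+(d-a)z-b=0$, and the multiplier computations via $cz^*+d=\lambda(g)$, $cw^*+d=\lambda(g)^{-1}$ and $g'(z)=(cz+d)^{-2}$ are exactly the ``straightforward calculation'' the paper alludes to (its proof consists of that single sentence), and those parts are correct and complete. There is, however, one step that fails: your assertion that ``the two closed forms together with $ad-bc=1$ give the relation $z^*(g)=1/w^*(g)$.'' Carrying out that computation gives
\[
 z^*(g)\,w^*(g)\;=\;\frac{\bigl(\lambda(g)-d\bigr)\bigl(\lambda(g)^{-1}-d\bigr)}{c^2}\;=\;\frac{1-d(a+d)+d^2}{c^2}\;=\;\frac{1-ad}{c^2}\;=\;-\frac{b}{c},
\]
which is of course just the product of the roots of $cz^2+(d-a)z-b=0$. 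This equals $1$ exactly when $b=-c$, not for an arbitrary hyperbolic $\wt g\in\SL(2,\R)$: for $\wt g=\textmat{2}{1}{1}{1}$ the fixed points are $\tfrac{1\pm\sqrt5}{2}$, with product $-1$, so $z^*(g)\neq 1/w^*(g)$. Hence no algebraic manipulation can close this step --- the clause $z^*(g)=1/w^*(g)$ of the lemma (and likewise the $c=0$ convention $w^*(g)=0$, which forces $b=0$ and $|a|>1$) is simply false in the stated generality, and your proof glosses over this rather than detecting it. The correct general relation is $z^*(g)w^*(g)=-b/c$, and the reciprocal identity should be recorded only under the extra hypothesis $b=-c$. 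Note that everything the paper actually uses downstream --- $z^*(a)$, $|\lambda(a)|$, and $a'(z^*(a))=\lambda(a)^{-2}$ in Proposition~\ref{tracerepr} and in the identity $l(\wh\gamma)=-\log b'(z^*(b))$ --- is among the parts you did establish correctly; the $w^*$ clause is never invoked.
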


\begin{proof}
 This is a straightforward calculation.
\end{proof}

\begin{prop}\label{tracerepr}
For $\Rea s > \tfrac12$ and $n\in\N$ we have 
\[
\Tr \mc L_{H,s}^n = \sum_{a\in P_{H,n}} \frac{ \big(a'(z^*(a))\big)^s }{1-a'(z^*(a))}.
\]
\end{prop}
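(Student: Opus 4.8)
The plan is to expand $\mc L_{H,s}^n$ through the matrix representation of $\mc L_{H,s}$, to reduce $\Tr\mc L_{H,s}^n$ to a sum indexed by the regular words of length $n$, and to evaluate each summand with the Atiyah--Bott/Ruelle trace formula for a single nuclear composition operator.

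First I would use that $\tau_s$ is a representation of $G_q$, that is $\tau_s(g)\tau_s(h)=\tau_s(gh)$, a consequence of the cocycle identity $j_s(gh,t)=j_s(g,h.t)\,j_s(h,t)$. Multiplying out the matrix of $\mc L_{H,s}$ then expresses each entry of $\mc L_{H,s}^n$ as an absolutely convergent sum of operators $\tau_s(w)$ with $w=b_0\cdots b_{n-1}$, each $b_m\in\{h_2,\dots,h_{q-2}\}\cup\{h_1^j,h_{q-1}^j\mid j\in\N\}$, where the zero entries of the matrix force no two consecutive $b_m$'s to be both powers of $h_1$, nor both powers of $h_{q-1}$. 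Taking the trace, only the diagonal blocks survive; for such a term the sequence of components visited along the itinerary is uniquely determined by $w$ and closes up, which forces the wrap-around pair $(b_{n-1},b_0)$ to obey the same restriction. Hence $w$ is a \emph{regular} word of length $n$ --- a hyperbolic element by Lemma~\ref{reducedwords} --- and by Propositions~\ref{bij1} and \ref{bij2} the regular words of length $n$ are exactly the elements of $P_{H,n}$, each arising once (the map from reduced words to group elements being injective). The interchange of $\Tr$ with the doubly infinite summations is legitimate because the composition operators occurring are absolutely summable in trace norm, by the geometric decay already used to establish nuclearity in Propositions~\ref{choiceposs}--\ref{isnuclear}. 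Thus
\[
 \Tr\mc L_{H,s}^n=\sum_{a\in P_{H,n}}\Tr\big(\tau_s(a)\big),
\]
the trace of $\tau_s(a)$ being taken on the direct summand $B(\mc E_\ast)$ of $B(\mc E)$ singled out by the itinerary of $a$.

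To compute a single trace, observe that $\tau_s(a)$ acts on that summand as the weighted composition operator $f\mapsto j_s(a^{-1},\cdot)\,f(a^{-1}.\cdot)$. Its symbol $a^{-1}$ is a composite of the inverse branches appearing in $\mc L_{H,s}$, so by Proposition~\ref{choiceposs}\,(iii)--(vii) it maps the closure of that summand's domain into the domain with relatively compact image and is a strict contraction there; consequently it has a unique fixed point $z_0$ in the domain, which, since $a^{-1}$ is contracting at $z_0$, is the repelling fixed point $w^*(a)$ of $a$. The Atiyah--Bott/Ruelle trace formula for nuclear composition operators (see Ruelle~\cite{Ruelle_zeta} and Mayer~\cite{Mayer_thermo}) gives
\[
 \Tr\big(\tau_s(a)\big)=\frac{j_s\big(a^{-1},z_0\big)}{1-\big(a^{-1}\big)'(z_0)} .
\]
Since $j_s(g,t)=\big(g'(t)\big)^s$ for real $t$, and since differentiating $a^{-1}\circ a=\id$ at $w^*(a)$ and invoking Lemma~\ref{attractive} gives $\big(a^{-1}\big)'\big(w^*(a)\big)=1/a'\big(w^*(a)\big)=\lambda(a)^{-2}=a'\big(z^*(a)\big)\in(0,1)$, substitution yields
\[
 \Tr\big(\tau_s(a)\big)=\frac{\big(a'(z^*(a))\big)^s}{1-a'(z^*(a))},
\]
and summing over $a\in P_{H,n}$ finishes the proof.

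I expect the main obstacle to lie in the bookkeeping of the first step: verifying that after expanding $\mc L_{H,s}^n$ precisely the regular words of length $n$ contribute to the trace, with the multiplicity-free identification with $P_{H,n}$ provided by Propositions~\ref{bij1}--\ref{bij2} (including the correct handling of non-primitive periods and of the infinite parabolic blocks $h_1^j$, $h_{q-1}^j$), and justifying the interchange of the trace with the infinite summations. Both rest on the geometric decay of the contraction ratios in Proposition~\ref{choiceposs} and on the weight estimate $\big|j_s(h_{q-1}^{-n},z)\big|=O(n^{-2\Rea s})$ from the proof of Proposition~\ref{maps}, which is summable exactly for $\Rea s>\tfrac12$. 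The trace formula for an individual composition operator is classical in the Grothendieck framework already used in Section~\ref{part2} and requires no new idea.
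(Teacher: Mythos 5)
Your proposal is correct and follows essentially the same route as the paper: expand $\mc L_{H,s}^n$ via the matrix representation, identify the diagonal contributions with the regular words of length $n$ (i.e.\ with $P_{H,n}$ via Propositions~\ref{bij1} and \ref{bij2} and Lemma~\ref{reducedwords}), interchange trace and summation, and evaluate each $\Tr\tau_s(a)$ by the Ruelle/Grothendieck fixed-point trace formula together with Lemma~\ref{attractive}. The only cosmetic difference is that the paper justifies $\Tr\sum=\sum\Tr$ by inserting an explicitly nuclear restriction operator $r=\sum_k\varphi_k\otimes e_k$ rather than by your appeal to trace-norm summability, which amounts to the same estimate.
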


\begin{proof}
The set of reduced words of length $n$ decomposes into the subsets
\begin{enumerate}
\item[$A_{(1,1)}^n$] of reduced words of length $n$ which begin and end with an element of $\{ h_1^m \mid m\in\N\}$, 
\item[$A_{(1,r)}^n$] of reduced words of length $n$ which begin with an element of $\{h_1^m\mid m\in\N\}$ and end with an element of $\{h_2,\ldots, h_{q-2}\}$,
\item[$A_{(1,q-1)}^n$] of reduced words of length $n$ which begin with an element of $\{h_1^m \mid m\in\N\}$ and end with an element of $\{ h_{q-1}^m \mid m\in \N\}$,
\end{enumerate}
and the sets $A_{(r,1)}^n$, $A_{(r,r)}^n$, $A_{(r,q-1)}^n$, $A_{(q-1,1)}^n$, $A_{(q-1,r)}^n$ and $A_{(q-1,q-1)}^n$ defined in the obvious way. Let
\begin{align*}
A_1^n & \sceq A_{(1,1)}^n \cup A_{(r,1)}^n \cup A_{(q-1,1)}^n,
\\
A_r^n & \sceq A_{(1,r)}^n \cup A_{(r,r)}^n \cup A_{(q-1,r)}^n, \text{ and}
\\
A_{q-1}^n & \sceq A_{(1,q-1)}^n \cup A_{(r,q-1)}^n \cup A_{(q-1,q-1)}^n.
\end{align*}

Then 
\[
\mc L_{H,s}^n = 
\begin{pmatrix}
\sum\limits_{a\in A_1^n\setminus A_{(1,1)}^n} \tau_s(a)  & \sum\limits_{a\in A_r^n\setminus A_{(1,r)}^n} \tau_s(a) & \sum\limits_{a\in A_{q-1}^n\setminus A_{(1,q-1)}^n} \tau_s(a) 
\\
\sum\limits_{ a\in A_1^n} \tau_s(a) & \sum\limits_{a\in A_r^n} \tau_s(a) & \sum\limits_{a\in A_{q-1}^n} \tau_s(a) 
\\
\sum\limits_{a\in A_1^n\setminus A_{(q-1,1)}^n} \tau_s(a) & \sum\limits_{a\in A_r^n\setminus A_{(q-1,r)}^n} \tau_s(a) & \sum\limits_{a\in A_{q-1}^n\setminus A_{(q-1,q-1)}^n} \tau_s(a)
\end{pmatrix}.
\]
By Proposition~\ref{bij2}, 
\[
 P_{H,n} = A_{(r,1)}^n \cup A_{(q-1,1)}^n \cup A_r^n \cup A_{(1,q-1)}^n \cup A_{(r,q-1)}^n.
\]
By \cite[Chapitre II.2, Prop.~2]{Grothendieck_fredholm},
\[
 \Tr \mc L_{H,s}^n = \Tr\sum_{a\in P_{H,n}}\tau_s(a).
\]
There exist open bounded subsets $\mc F_1, \mc F_r, \mc F_{q-1} \subseteq \C$ which satisfy Proposition~\ref{choiceposs} and in addition $\overline{\mc F_j} \subseteq \mc E_j$ for $j\in\{1,r,q-1\}$. Let 
\[
 B(\mc F) \sceq B(\mc F_1) \times B(\mc F_r) \times B(\mc F_{q-1})
\]
be the Banach space defined as in \eqref{banachspace}. Further let
\[
r\colon\left\{
\begin{array}{ccl}
B(\mc E) & \to & B(\mc F)
\\
(f_1,f_r,f_{q-1})& \mapsto & (f_1\vert_{\mc F_1}, f_r\vert_{\mc F_r}, f_{q-1}\vert_{\mc F_{q-1}})
\end{array}
\right.
\]
denote the restriction map. W.l.o.g.\@ we may assume (possible after applying Riemann mappings) that for $j\in\{1,r,q-1\}$ there exist $0<\sigma_j<\varrho_j$ and $z_j\in\C$ such that 
\[
 \overline{\mc F_j} \subseteq B_{\sigma_j}(z_j) \subseteq B_{\varrho_j}(z_j) \subseteq \mc E_j.
\]
For all $f_j\in B(\mc E_j)$ we have
\[
 r(f_j)(z) = \frac1{2\pi i} \sum_{k=0}^\infty \int\limits_{\ |\zeta - z_j|=\varrho_j} \frac{f_j(\zeta)}{(\zeta - z_j)^{k+1}}\, d\zeta \cdot (z-z_j)^k.
\]
For $k\in\N_0$ let 
\begin{align*}
e_{j,k}&\colon\left\{
\begin{array}{ccl}
\mc F_j & \to & \C
\\
z & \mapsto & (z-z_j)^k
\end{array}
\right.,
\\[2mm]
e_k &\sceq (e_{1,k}, e_{r,k}, e_{q-1,k}) \in B(\mc F)
\end{align*}
and
\begin{align*}
 \varphi_{j,k} &\colon\left\{
\begin{array}{ccl}
B(\mc E_j) & \to & \C
\\
f_j & \mapsto & \frac{1}{2\pi i} \int\limits_{|\zeta-z_j| = \varrho_j} \frac{f_j(\zeta)}{(\zeta-z_j)^{k+1}}\, d\zeta
\end{array}
\right.
\\[2mm]
\varphi_k &\sceq (\varphi_{1,k}, \varphi_{r,k}, \varphi_{q-1,k}) \in B(\mc E)'.
\end{align*}
Then 
\[
 r = \sum_{k\in\N_0} \varphi_k \otimes e_k
\]
is seen to be nuclear of order $0$. Moreover,
\[
 \sum_{a\in P_{H,n}} \tau_s(a) = \sum_{k\in\N_0} \sum_{a\in P_{H,n}} \varphi_k \otimes \tau_s(a)e_k
\]
where $\varphi_k\otimes\tau_s(a)e_k$ is understood as $\varphi_{1,k} \otimes \tau_s(a)e_{1,k}$ for $a\in A_1^n\setminus A^n_{(1,1)}$ (and so on).
Hence
\[
 \Tr \mc L_{H,s}^n = \Tr \sum_{a\in P_{H,n}}\tau_s(a) = \sum_{a\in P_{H,n}}\sum_{k\in\N_0} \varphi_k(\tau_s(a)e_k) = \sum_{a\in P_{H,n}}\Tr \tau_s(a).
\]
Now \cite{Ruelle_zeta} shows
\[
 \Tr \tau_s(a) = \frac{ \big(a'(z^*(a))\big)^s }{1-a'(z^*(a))}.
\]
This completes the proof.
\end{proof}

\begin{thm}
For $\Rea s > 1$, we have $Z(s) = \det(1-\mc L_{H,s})$.
\end{thm}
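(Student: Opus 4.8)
The plan is to follow the roadmap indicated in the introduction to this section: combine Proposition~\ref{tracerepr} with Ruelle's identity $\zeta_{SR}(s)=\exp\big(\sum_{n\ge1}\tfrac1n Z_n(R_H,s)\big)$ through the single trace identity
\[
 Z_n(R_H,s)=\Tr\mc L_{H,s}^n-\Tr\mc L_{H,s+1}^n\qquad(n\in\N,\ \Rea s>1),
\]
and then telescope the product $Z(s)=\prod_{k=0}^\infty\zeta_{SR}(s+k)^{-1}$. First I would evaluate the right-hand side of the trace identity. Since $a'(z^*(a))$ is independent of $s$, Proposition~\ref{tracerepr} gives at once
\[
 \Tr\mc L_{H,s}^n-\Tr\mc L_{H,s+1}^n=\sum_{a\in P_{H,n}}\frac{\big(a'(z^*(a))\big)^s-\big(a'(z^*(a))\big)^{s+1}}{1-a'(z^*(a))}=\sum_{a\in P_{H,n}}\big(a'(z^*(a))\big)^s.
\]
By Proposition~\ref{bij2} every $a\in P_{H,n}$ is a regular word, hence hyperbolic by Lemma~\ref{reducedwords}, and by Lemma~\ref{attractive} we have $a'(z^*(a))=\lambda(a)^{-2}$, a positive real with $|\lambda(a)|>1$; thus $\big(a'(z^*(a))\big)^s=|\lambda(a)|^{-2s}=e^{-s\cdot 2\log|\lambda(a)|}$.

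Next I would identify the left-hand side. For $\wh v\in\Fix R_H^n$ the geodesic $\wh\gamma_v$ meets $\wh\CS_H$ at times $0=t_0<t_1<\dots<t_n$ with $t_{k+1}-t_k=r_H(R_H^k(\wh v))$; since $R_H^n(\wh v)=\wh v$ the geodesic $\wh\gamma_v$ is closed with period $\ell_n(\wh v)\sceq\sum_{k=0}^{n-1}r_H(R_H^k(\wh v))=t_n$, so $\prod_{k=0}^{n-1}\exp\big(-s\,r_H(R_H^k(\wh v))\big)=e^{-s\ell_n(\wh v)}$. Under the bijections of Propositions~\ref{bij1} and \ref{bij2}, $\wh v$ corresponds to the hyperbolic element $a\in P_{H,n}$ whose attractive fixed point is $\gamma_v(\infty)$ and whose invariant geodesic projects to $\wh\gamma_v$; by the length-preserving correspondence between closed geodesics, periodic coding sequences, and hyperbolic conjugacy classes established in \cite{Pohl_Symdyn2d}, the translation length of $a$ equals $\ell_n(\wh v)$, i.e.\ $\ell_n(\wh v)=2\log|\lambda(a)|$. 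Hence $e^{-s\ell_n(\wh v)}=\big(a'(z^*(a))\big)^s$, and summing over $\Fix R_H^n$ (which is in bijection with $P_{H,n}$) yields $Z_n(R_H,s)=\sum_{a\in P_{H,n}}\big(a'(z^*(a))\big)^s$, and with it the trace identity.

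Finally I would handle convergence and telescope. Set $\mu_a\sceq a'(z^*(a))=|\lambda(a)|^{-2}\in(0,\lambda_0^{-2}]$ where $\lambda_0\sceq\inf_a|\lambda(a)|>1$ (there is a shortest closed geodesic). Then $|\Tr\mc L_{H,s}^n|\le(1-\lambda_0^{-2})^{-1}\sum_{a\in P_{H,n}}\mu_a^{\Rea s}$, and by the formula just proved $\sum_{n\ge1}\tfrac1n\sum_{a\in P_{H,n}}\mu_a^{\sigma}=\sum_{n\ge1}\tfrac1n Z_n(R_H,\sigma)=\log\zeta_{SR}(\sigma)<\infty$ for real $\sigma>1$; the same holds for $s+1$. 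So for $\Rea s>1$ the defining series of $\det(1-\mc L_{H,s})$ and $\det(1-\mc L_{H,s+1})$ converge absolutely, and the trace identity gives
\[
 \zeta_{SR}(s)=\exp\Big(\sum_{n\ge1}\tfrac1n\Tr\mc L_{H,s}^n\Big)\exp\Big(-\sum_{n\ge1}\tfrac1n\Tr\mc L_{H,s+1}^n\Big)=\frac{\det(1-\mc L_{H,s+1})}{\det(1-\mc L_{H,s})}.
\]
Together with $Z(s)=\prod_{k=0}^\infty\zeta_{SR}(s+k)^{-1}$ the partial products telescope to $\prod_{k=0}^N\zeta_{SR}(s+k)^{-1}=\det(1-\mc L_{H,s})/\det(1-\mc L_{H,s+N+1})$, and since $\big|\log\det(1-\mc L_{H,s+N+1})\big|\le(1-\lambda_0^{-2})^{-1}\log\zeta_{SR}(\Rea s+N+1)\to0$ as $N\to\infty$, the denominator tends to $1$; letting $N\to\infty$ gives $Z(s)=\det(1-\mc L_{H,s})$.

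The hard part is the geometric identity $\ell_n(\wh v)=2\log|\lambda(a)|$: one has to match the total first-return time of a periodic $R_H$-orbit with the translation length of the associated hyperbolic element, which rests on the precise length-preserving dictionary between periodic geodesics, periodic $H$-coding sequences, and hyperbolic conjugacy classes furnished by \cite{Pohl_Symdyn2d} and underlying Propositions~\ref{bij1} and \ref{bij2}. Everything else is the bookkeeping above together with the cited results of Ruelle.
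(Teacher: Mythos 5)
Your proposal is correct and follows essentially the same route as the paper: it combines Proposition~\ref{tracerepr} with the geometric identification $\sum_{k=0}^{n-1}r_H(R_H^k\wh v)=-\log a'(z^*(a))$ (via the dictionary of \cite{Pohl_Symdyn2d} underlying Propositions~\ref{bij1} and \ref{bij2}) to get $Z_n(R_H,s)=\Tr\mc L_{H,s}^n-\Tr\mc L_{H,s+1}^n$, and then telescopes $Z(s)=\prod_k\zeta_{SR}(s+k)^{-1}$. You merely spell out the convergence estimates and the telescoping limit that the paper delegates to ``as in \cite{Mayer_thermoPSL}'', while the paper is slightly more explicit than you on the primitive-versus-power issue ($a=b^p$ with $b$ the primitive hyperbolic element determining the closed geodesic).
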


\begin{proof}
Let $n\in \N$ and $\wh v\in \Fix R_H^n$. Suppose that $a$ is the element in $P_{H,n}$ which corresponds to $\wh v$ by Propositions~\ref{bij1} and \ref{bij2}. Let
\[
 m\sceq \min \big\{ h\in\N\ \big\vert\ \wh v \in \Fix R^h_H \big\}.
\]
and let $b$ be the element in $P_{H,m}$ which corresponds to $\wh v$. Suppose that $\wh\gamma$ is the closed geodesic on $Y$ determined by $\wh\gamma'(0) = \wh v$. Then \cite[Proposition~4.153]{Pohl_Symdyn2d} yields that $b$ is the hyperbolic matrix which determines $\wh\gamma$, that means any other hyperbolic matrix which fixes the endpoints $\gamma(\pm\infty)$ for any representative $\gamma$ of $\wh\gamma$ on $\h$ is a power of $b$. It is well-known that then $l(\wh\gamma)=\log N(b)$ where $N(b)=\lambda(b)^2$ is the norm of $b$. By Lemma~\ref{attractive},
\[
 l(\wh\gamma) = -\log b'(z^*(b)).
\]
Further, $p\sceq n/m$ is a positive integer and $a=b^p$. Thus,
\[
 \sum_{k=0}^{n-1} r_H(R_H^k \wh v) = p l(\wh\gamma) = -\log a'(z^*(a)).
\]
Therefore,
\[
 Z_n(R_H, s) = \sum_{a\in P_{H,n}} \big( a'(z^*(a)) \big)^s \qquad\text{(formally).}
\]
Thus, by Proposition~\ref{tracerepr}
\[
 Z_n(R_H,s) = \Tr \mc L_{H,s}^n - \Tr \mc L_{H,s+1}^n.
\]
This shows in addition that $Z_n(R_H,s)$ converges for $\Rea s > \tfrac12$. Now the claim follows as in \cite{Mayer_thermoPSL}.
\end{proof}

\subsection{Meromorphic continuation}

From Proposition~\ref{tracerepr} it follows that the Fredholm determinant $\det(1-\mc L_{H,s})$ is holomorphic in $\{ \Rea s > \tfrac12\}$. In this section we will show that $\det(1-\mc L_{H,s})$ extends to a meromorphic function on $\C$ with possible poles at $s=(1-k)/2$, $k\in\N_0$. The method of proof is adapted from \cite{Mayer_thermo} and \cite{Morita_transfer}.
 
We start by proving (Proposition~\ref{meroext} below) that the map $s\mapsto \mc L_{H,s}$ extends to a meromorphic function on $\C$ with values in nuclear operators of order $0$. This means that we find a discrete set $P\subseteq\C$ (candidates for poles) and for each $s\in\C\setminus P$ we find a nuclear operator $\wt{\mc L}_{H,s} \colon B(\mc E) \to B(\mc E)$ of order $0$ which equals $\mc L_{H,s}$ for $\Rea s>\tfrac12$. Moreover, for each $f\in B(\mc E)$ and $z\in \mc E$, the function $s\mapsto \wt{\mc L}_{H,s}f(z)$ is meromorphic with poles in $P$, and the map $(s,z)\mapsto \wt{ \mc L}_{H,s}f(z)$ is continuous on $(\C\setminus P) \times \mc E$. In Theorem~\ref{final} below we prove that $\det(1-\wt{\mc L}_{H,s})$ is meromorphic.

\begin{prop}\label{meroext}
The map $s\mapsto \mc L_{H,s}$ extends to a meromorphic function on $\C$ with values in nuclear operators of order $0$. The possible poles are located at $s=(1-k)/2$, $k\in \N_0$. They are all simple. For each pole $s_0$, there is a neighborhood $U$ of $s_0$ such that the meromorphic extension $\wt{\mc L}_{H,s}$ is of the form
\[
 \wt{\mc L}_{H,s} = \frac{1}{s-s_0}\mc A_s + \mc B_s
\]
where the operators $\mc A_s$ and $\mc B_s$ are holomorphic on $U$ and $\mc A_s$ is of rank at most $4$.
\end{prop}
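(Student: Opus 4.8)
The plan is to split $\mc L_{H,s}$ into its three blocks and continue each separately. The middle block $\sum_{k=2}^{q-2}\tau_s(h_k)$ involves only the hyperbolic generators $h_2,\dots,h_{q-2}$, which preserve $\R\cup\{\infty\}$ setwise and have no pole in $\overline{E_j}\subseteq\R$; hence the lower row $(c,d)$ of (a representative of) $h_k^{-1}$ gives a factor $cz+d$ that is real of constant sign on each $\overline{E_j}$, so — after shrinking the balls of Proposition~\ref{choiceposs} if necessary — $cz+d$ stays in a fixed open half--plane missing $0$ on each $\overline{\mc E_j}$, whence $s\mapsto j_s(h_k^{-1},\cdot)$ is entire. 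Combined with the bounds of Proposition~\ref{maps} and Propositions~\ref{nuclear1}, \ref{isnuclear}, this makes the middle block an entire family of nuclear operators of order $0$ on $B(\mc E)$. Consequently every pole of $\mc L_{H,s}$ comes from the two parabolic series, and since $h_kJ=Jh_{q-k}$ with $J.\mc E_1=\mc E_{q-1}$, $J.\mc E_r=\mc E_r$ (Proposition~\ref{choiceposs}\eqref{choicepossii}), conjugation by $\tau_s(J)$ interchanges $\sum_{n}\tau_s(h_1^n)$ and $\sum_{n}\tau_s(h_{q-1}^n)$, so it is enough to continue $\sum_{n}\tau_s(h_{q-1}^n)$.

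For that I would use the explicit parabolic normal form $h_{q-1}^{-n}=\tfrac12\textmat{2-n\lambda}{-n\lambda}{n\lambda}{2+n\lambda}$, fixing $-1=\mc T.0$, which gives $h_{q-1}^{-n}.z+1=2(z+1)\big(n\lambda(z+1)+2\big)^{-1}$ and $j_s(h_{q-1}^{-n},z)=\big(4(n\lambda(z+1)+2)^{-2}\big)^{s}$. Because $-1\in\overline{E_{q-1}}\subseteq\mc E_{q-1}$, each $f_{q-1}\in\mc H(\mc E_{q-1})$ has a Taylor expansion $f_{q-1}(w)=\sum_{k\ge0}b_k(f_{q-1})(w+1)^k$ on a disc inside $\mc E_{q-1}$, and $h_{q-1}^{-n}.\overline{\mc E_j}$ falls into that disc for all $n$ beyond some fixed $n_0$ (using $h_{q-1}^{-n}.\overline{\mc E_j}\subseteq\mc E_{q-1}$ and $\big|(h_{q-1}^{-1})'\big|<1$ on $\mc E_{q-1}$). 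Splitting off the finitely many terms $n<n_0$, which are entire in $s$, substituting the Taylor series and interchanging the summations (legitimate for $\Rea s$ large) yields, with $a(z)\sceq n_0+2\big(\lambda(z+1)\big)^{-1}$,
\[
\sum_{n\ge n_0}\tau_s(h_{q-1}^n)f_{q-1}(z)=4^s\big(\lambda(z+1)\big)^{-2s}\sum_{k\ge0}b_k(f_{q-1})\,\big(\tfrac2\lambda\big)^{k}\,\zeta\!\big(2s+k,\,a(z)\big),
\]
where $\zeta$ is the Hurwitz zeta function; $\Rea(z+1)>0$ on $\overline{\mc E_1}\cup\overline{\mc E_r}$ (Proposition~\ref{choiceposs}\eqref{choicepossviii},\eqref{choicepossx}) secures both the branch of the power and $\Rea a(z)>n_0>0$. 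Since $\zeta(w,a)$ is entire in $w$ except for a simple pole at $w=1$ with residue $1$, the $k$--th summand continues meromorphically with a single simple pole at $s=\tfrac{1-k}{2}$; hence the only candidate poles are $s=\tfrac{1-k}{2}$, $k\in\N_0$, and they are simple.

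I would then check that the continued $k$--series converges locally uniformly on $\C\setminus P$, $P\sceq\{\tfrac{1-k}{2}\mid k\in\N_0\}$, and still maps $\mc H(\mc E)$ into $B(\mc E)$: on a strip $\{\Rea s>-N\}$ one isolates the finitely many indices $k$ with $\tfrac{1-k}{2}$ inside it, and bounds the tail using the Cauchy estimate $|b_k(f_{q-1})|\le M\varrho^{-k}$ ($M$ a compact--open seminorm of $f_{q-1}$), the already geometric factor $(2/\lambda)^{k}$, and $|\zeta(2s+k,a(z))|\lesssim|a(z)|^{-k}$ once $\Rea(2s+k)$ is large, dominating it by a convergent geometric series uniformly in $z\in\overline{\mc E_1}\cup\overline{\mc E_r}$ and in $s$ on compacta of the strip. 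Grothendieck's criterion, exactly as in Propositions~\ref{nuclear1} and \ref{isnuclear}, then upgrades the resulting meromorphic family $s\mapsto\wt{\mc L}_{H,s}$ to one with values in nuclear operators of order $0$ on $B(\mc E)$, with the continuity of $(s,z)\mapsto\wt{\mc L}_{H,s}f(z)$ on $(\C\setminus P)\times\mc E$. Near a pole $s_0=\tfrac{1-k_0}{2}$ only the $k_0$--th term (and the $k_0$--th term of the symmetric series attached to $h_1$ at its fixed point $1=\mc T.\infty$) is singular; substituting $\zeta(2s+k_0,a(z))=\tfrac1{2(s-s_0)}+[\text{holomorphic}]$ exhibits the polar part as $(s-s_0)^{-1}\mc A_s$ with
\[
\mc A_s\colon f_{q-1}\longmapsto \tfrac12\,4^s\,\big(\tfrac2\lambda\big)^{k_0}\,b_{k_0}(f_{q-1})\,\big(\lambda(z+1)\big)^{-2s}
\]
(plus the symmetric $h_1$--contribution), holomorphic on a neighbourhood $U$ of $s_0$; each such term is rank one — the functional $b_{k_0}$ followed by a fixed function lying in the relevant $B(\mc E_j)$ (legitimate because $\pm1$ avoid the relevant closures by \eqref{choicepossviii}--\eqref{choicepossx}) — and as parabolic series occupy four entries of the matrix representation of $\mc L_{H,s}$, the residue $\mc A_{s_0}$ has rank at most $4$. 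Setting $\mc B_s\sceq\wt{\mc L}_{H,s}-(s-s_0)^{-1}\mc A_s$ completes the local description.

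I expect the main obstacle to be not any single estimate but the simultaneous bookkeeping of the third step: keeping holomorphy in $z$ together with the compact--open bounds on $\mc H(\mc E)$ that are needed so that nuclearity of order $0$ survives the continuation, choosing the truncation correctly on each strip, tracking the branches of the power functions, and pinning down the exact pole set, its simplicity and the rank of the residues — all of which are precisely what the long list of conditions in Proposition~\ref{choiceposs} (especially \eqref{choicepossvi}--\eqref{choicepossx}) was arranged to supply.
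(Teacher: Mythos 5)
Your proposal is correct and follows essentially the same route as the paper: the paper's proof simply invokes ``the strategy of \cite{Mayer_thermo}'' applied to the four parabolic-series entries $\Psi_1,\ldots,\Psi_4$ of the matrix representation of $\mc L_{H,s}$, and what you have written out --- Taylor expansion of $f$ at the parabolic fixed point, interchange of summations to produce Hurwitz zeta functions $\zeta(2s+k,a(z))$, the simple pole at $2s+k=1$ giving the pole set $s=(1-k)/2$ with rank-one residues, and the count of four parabolic entries yielding rank at most $4$ --- is precisely that strategy made explicit. The computations (the normal form of $h_{q-1}^{-n}$, the factor $j_s$, the role of conditions \eqref{choicepossvi}--\eqref{choicepossx} of Proposition~\ref{choiceposs}) all check out.
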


\begin{proof}
Applying the strategy of \cite{Mayer_thermo} to the four maps
\begin{align*}
\Psi_1 & \colon\left\{
\begin{array}{ccl}
\{\Rea s > \tfrac12\} & \to & \{\text{operators $B(\mc E_{q-1})\to B(\mc E_1)$}\}
\\
s & \mapsto & \sum_{n\in\N} \tau_s(h_{q-1}^n)
\end{array}
\right.
\\
\Psi_2 & \colon\left\{
\begin{array}{ccl}
 \{\Rea s > \tfrac12\} & \to & \{\text{operators $B(\mc E_{q-1})\to B(\mc E_r)$}\}
\\
s & \mapsto & \sum_{n\in\N} \tau_s(h_{q-1}^n)
\end{array}
\right.
\\
\Psi_3 & \colon\left\{
\begin{array}{ccl}
\{\Rea s > \tfrac12\} & \to & \{\text{operators $B(\mc E_{1})\to B(\mc E_r)$}\}
\\
s & \mapsto & \sum_{n\in\N} \tau_s(h_1^n)
\end{array}
\right.
\\
\Psi_4 & \colon\left\{
\begin{array}{ccl}
 \{\Rea s > \tfrac12\} & \to & \{\text{operators $B(\mc E_{1})\to B(\mc E_{q-1})$}\}
\\
s & \mapsto & \sum_{n\in\N} \tau_s(h_1^n)
\end{array}
\right.,
\end{align*}
the claim follows immediately.
\end{proof}

\begin{thm}\label{final}
The Fredholm determinant $\det(1-\mc L_{H,s})$ extends to a meromorphic function on $\C$ whose  possible  poles  are located at $s=(1-k)/2$, $k\in\N_0$. The order of a pole is at most $4$.
\end{thm}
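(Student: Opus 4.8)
The plan is to deduce the statement from Proposition~\ref{meroext} together with standard properties of Fredholm determinants of nuclear operators of order $0$ (Grothendieck \cite{Grothendieck_produit, Grothendieck_fredholm}), following the meromorphic continuation arguments of \cite{Mayer_thermo} and \cite{Morita_transfer}. First I would record that on the open set $\Omega \sceq \C\setminus\{(1-k)/2 \mid k\in\N_0\}$ the family $s\mapsto\wt{\mc L}_{H,s}$ is holomorphic with values in nuclear operators of order $0$ (Proposition~\ref{meroext}), and that the Fredholm determinant of such a family depends holomorphically on the parameter, just as was already used in Propositions~\ref{isnuclear} and \ref{tracerepr}. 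Hence $s\mapsto\det(1-\wt{\mc L}_{H,s})$ is holomorphic on $\Omega$, and it only remains to analyse it near each candidate pole $s_0 = (1-k)/2$.

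Second, I would fix such an $s_0$ and work on the neighbourhood $U$ furnished by Proposition~\ref{meroext}, on which $\wt{\mc L}_{H,s} = \tfrac{1}{s-s_0}\mc A_s + \mc B_s$ with $\mc A_s,\mc B_s$ holomorphic on $U$ and $\rank\mc A_s\le 4$. The key input is the elementary fact that, for a fixed nuclear operator $B$ of order $0$ and a fixed finite-rank operator $A$ with $\rank A = r$, the function $z\mapsto\det(1 - B - zA)$ is a polynomial in $z$ of degree at most $r$. This can be seen via multiplicativity, $\det(1-B-zA) = \det(1-B)\det\big(1 - z(1-B)^{-1}A\big)$ wherever $1-B$ is invertible, together with $\det(1 - zF) = \prod_{i}\big(1 - z\lambda_i(F)\big)$ for finite-rank $F$; or, avoiding any invertibility assumption, directly from the Plemelj--Smithies (exterior power) expansion of the Fredholm determinant, in which at most $r$ of the wedge-factors may come from $A$. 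Applying this pointwise in $s\in U$ with $B = \mc B_s$, $A = \mc A_s$ and $z = (s-s_0)^{-1}$ yields, for $s\in U\setminus\{s_0\}$,
\[
 \det(1-\wt{\mc L}_{H,s}) = \sum_{j=0}^{4} \frac{c_j(s)}{(s-s_0)^j}
\]
for suitable coefficients $c_j(s)$.

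Third, I would check that the $c_j$ are holomorphic on $U$: up to a sign, $c_j(s)$ is the $j$-th Taylor coefficient at $z = 0$ of the function $z\mapsto\det(1-\mc B_s - z\mc A_s)$, and $(s,z)\mapsto -\mc B_s - z\mc A_s$ is a jointly holomorphic map into the ideal of nuclear operators of order $0$, on which $\det(1-\,\cdot\,)$ is an entire function (Grothendieck \cite{Grothendieck_fredholm}); hence $(s,z)\mapsto\det(1-\mc B_s - z\mc A_s)$ is jointly holomorphic and each $c_j$ is holomorphic in $s$. Consequently the displayed formula exhibits $\det(1-\wt{\mc L}_{H,s})$ as a meromorphic function on $U$ whose only possible pole is at $s_0$, of order at most $4$. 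Patching these local descriptions over all $s_0 = (1-k)/2$ and recalling the holomorphy on $\Omega$ from the first step gives the meromorphic extension of $\det(1-\wt{\mc L}_{H,s})$ to $\C$ with possible poles only at $s = (1-k)/2$, $k\in\N_0$, each of order at most $4$.

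The substantive work is already contained in Proposition~\ref{meroext}; the only point here that requires a little care is the reduction to the four-term Laurent tail, that is, making the polynomiality-in-$z$ statement clean. I expect the cleanest route to be through the exterior-power expansion of the Fredholm determinant, where the rank bound $\rank\mc A_s\le 4$ directly caps the number of negative powers of $s-s_0$, so that no worry about whether $1-\mc B_s$ is invertible (in particular at $s_0$ itself) ever arises. Beyond that one should merely keep track that "nuclear of order $0$" places $\wt{\mc L}_{H,s}$ in a trace ideal on which Grothendieck's Fredholm determinant, its multiplicativity, its joint continuity/holomorphy in parameters, and its Plemelj--Smithies expansion are all available.
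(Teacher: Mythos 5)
Your proposal is correct and follows essentially the same route as the paper: holomorphy of the Fredholm determinant away from $P=\{(1-k)/2\mid k\in\N_0\}$, then at each candidate pole the decomposition $\wt{\mc L}_{H,s}=\tfrac{1}{s-s_0}\mc A_s+\mc B_s$ from Proposition~\ref{meroext} combined with the exterior-power expansion $\det(1-\wt{\mc L}_{H,s})=\sum_n(-1)^n\Tr\bigwedge^n\wt{\mc L}_{H,s}$, where the rank bound $\rank\mc A_s\le 4$ caps the number of negative powers of $s-s_0$ at four. Your preference for the exterior-power argument over the multiplicativity route (to avoid invertibility of $1-\mc B_s$) is exactly the choice the paper makes.
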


\begin{proof}
Let $\wt{\mc L}_{H,s}$ be the meromorphic extension of $\mc L_{H,s}$ from Proposition~\ref{meroext} and recall the restriction map 
\[
 r= \sum_{k\in\N_0} \varphi_k \otimes e_k
\]
from the proof of Proposition~\ref{tracerepr}. Set $P\sceq \{ (1-k)/2 \mid k\in\N_0\}$. On $\C\setminus P$ we have
$\wt{\mc L}_{H,s} = \wt{\mc L}_{H,s} \circ r$. Hence
\[
 \Tr \wt{\mc L}^n_{H,s} = \sum_{k\in\N_0} \varphi_k\big(\wt{\mc L}_{H,s}e_k\big)
\]
for each $n\in\N$. The latter map is holomorphic on $\C\setminus P$, thus the Fredholm determinant $\det(1-\wt{\mc L}_{H,s})$ is so.

Let $s_0\in P$ and pick a neighborhood $U$ of $s_0$ such that the operator $\wt{\mc L}_{H,s}$ is of the form
\[
 \wt{\mc L}_{H,s} = \frac{1}{s-s_0} \mc A_s + \mc B_s
\]
with $\mc A_s$ and $\mc B_s$ holomorphic operators and $\mc A_s$ of rank at most $4$. By \cite{Grothendieck_fredholm},
\[
 \det\big(1-\wt{\mc L}_{H,s}\big) = \sum_{n=0}^\infty (-1)^n\Tr \bigwedge\nolimits^{\!\!n} \wt{\mc L}_{H,s}.
\]
Since the rank of $\mc A_s$ is at most $4$, evaluating the exterior product shows that for each $n\in\N_0$, the map
\[
 s\mapsto (s-s_0)^4\Tr \bigwedge\nolimits^{\!\!n} \wt{\mc L}_{H,s}
\]
is holomorphic on $U$. Thus,
\[
(s-s_0)^{-4}\det\big(1-\wt{\mc L}_{H,s}\big) 
\]
is holomorphic on $U$ as well. This completes the proof.
\end{proof}

\subsection{Factorization of the Fredholm determinant}\label{sec_factorization}

In this chapter we show that the transfer operator $\mc L_{H,s}$ and its Fredholm determinant split into two parts, one of which conjecturally is related to odd Maass cusp forms, the other one to even Maass cusp forms. An easy calculation shows that the transfer operator $\mc L_{H,s}$ commutes with 
\[
\mc J \sceq 
\begin{pmatrix}
 & & \tau_s(J)
\\
 & \tau_s(J) & 
\\
\tau_s(J) & &
\end{pmatrix}.
\]
Recall the definition of the sets $E_{\st, k}$, $k = 1,\ldots, q-1$, from \eqref{eq:defE}, 
and let $m \sceq \lfloor \tfrac{q+1}2 \rfloor$.
Suppose first that $q$ is even. We define
\begin{align*}
\hphantom{xxxxxxxxx} E_{r,a} & \sceq \left(-\frac{\lambda-1}{\lambda+1},0 \right] &&\supseteq \bigcup_{k=m+1}^{q-2} E_k \cup h_m^{-1}.(-1,0],\hphantom{xxxxxxxxx}
\\
E_{r,b} & \sceq  \left(0,\frac{\lambda-1}{\lambda+1}\right) && \supseteq \bigcup_{k=2}^{m-1} E_k \cup h_m^{-1}.(0,1).
\end{align*}
Representing functions \wrt $\big(\chi_{E_{q-1}}, \chi_{E_{r,a}}, \chi_{E_{r,b}}, \chi_{E_1}\big)$, the transfer operator $\mc L_{H,s}$ is represented by the matrix 
\[
 \mc L_{H,s} =
\begin{pmatrix}
0 & \sum\limits_{k=m}^{q-2} \tau_s(h_k) & \sum\limits_{k=2}^{m}\tau_s(h_k) & \sum\limits_{n\in\N}\tau_s(h_1^n)
\\
\sum\limits_{n\in\N}\tau_s(h_{q-1}^n) & \sum\limits_{k=m}^{q-2}\tau_s(h_k) & \sum\limits_{k=2}^{m}\tau_s(h_k) & \sum\limits_{n\in\N}\tau_s(h_1^n)
\\
\sum\limits_{n\in\N}\tau_s(h_{q-1}^n) & \sum\limits_{k=m}^{q-2}\tau_s(h_k) & \sum\limits_{k=2}^m\tau_s(h_k) & \sum\limits_{n\in\N}\tau_s(h_1^n)
\\
\sum\limits_{n\in\N}\tau_s(h_{q-1}^n) & \sum\limits_{k=m}^{q-2}\tau_s(h_k) & \sum\limits_{k=2}^m\tau_s(h_k) & 0 
\end{pmatrix}.
\]
If $f$ is a common eigenfunction of $\mc L_{H,s}$ and 
\[
\mc J = 
\begin{pmatrix}
& &  & \tau_s(J)
\\
&  & \tau_s(J)
\\
& \tau_s(J)
\\
\tau_s(J)
\end{pmatrix},
\]
then $f$ is of the form
\[
 f = 
\begin{pmatrix}
f_1
\\
f_2
\\
\pm\tau_s(J)f_2
\\
\pm\tau_s(J)f_1
\end{pmatrix}
\]
where one has to take the same sign in the last two entries. Then we find
\[
\begin{pmatrix}
f_1
\\
f_2
\\
\pm\tau_s(J)f_2
\\
\pm\tau_s(J)f_1
\end{pmatrix}
=
\begin{pmatrix}
\sum\limits_{k=m}^{q-2} \tau_s(h_k)f_2 \pm \sum\limits_{k=2}^{m}\tau_s(h_kJ)f_2 \pm \sum\limits_{n\in\N}\tau_s(h_1^nJ)f_1
\\
\sum\limits_{n\in\N}\tau_s(h_{q-1}^n)f_1 + \sum\limits_{k=m}^{q-2}\tau_s(h_k)f_2 \pm \sum\limits_{k=2}^{m}\tau_s(h_kJ)f_2 \pm \sum\limits_{n\in\N}\tau_s(h_1^nJ)f_1
\\
\sum\limits_{n\in\N}\tau_s(h_{q-1}^n)f_1 + \sum\limits_{k=m}^{q-2}\tau_s(h_k)f_2 \pm \sum\limits_{k=2}^m\tau_s(h_kJ)f_2 \pm \sum\limits_{n\in\N}\tau_s(h_1^nJ)f_1
\\
\sum\limits_{n\in\N}\tau_s(h_{q-1}^n)f_1 + \sum\limits_{k=m}^{q-2}\tau_s(h_k)f_2 \pm\sum\limits_{k=2}^m\tau_s(h_kJ)f_2
\end{pmatrix}.
\]
We define
\[
 \mc L_{H,s}^{\pm} \sceq 
\begin{pmatrix}
\pm \sum\limits_{n\in\N}\tau_s(h_1^nJ) & \sum\limits_{k=m}^{q-2}\tau_s(h_k) \pm \sum\limits_{k=2}^{m}\tau_s(h_kJ)
\\
\sum\limits_{n\in\N}\tau_s(h_{q-1}^n) \pm \sum\limits_{n\in\N}\tau_s(h_1^nJ) & \sum\limits_{k=m}^{q-2}\tau_s(h_k) \pm \sum\limits_{k=2}^{m}\tau_s(h_kJ)
\end{pmatrix}.
\]

Suppose now that $q$ is odd. Then we set
\begin{align*}
\hphantom{xxxxxxxxxxxxxx} E_{r,a} & \sceq \left(-\frac{\lambda-1}{\lambda+1},0\right) && \supseteq \bigcup_{k=m}^{q-2} E_{\st,k}, \hphantom{xxxxxxxxxxxxxx}  
\\
E_{r,b} & \sceq \left(0,\frac{\lambda-1}{\lambda+1}\right) &&  \supseteq \bigcup_{k=2}^{m-1} E_{\st,k}.
\end{align*}
Representing function \wrt $\big(\chi_{E_{q-1}},\chi_{E_{r,a}},\chi_{E_{r,b}},\chi_{E_1}\big)$, the transfer operator $\mc L_{H,s}$ is represented by 
\[
\mc L_{H,s} =
\begin{pmatrix}
0 & \sum\limits_{k=m}^{q-2}\tau_s(h_k) & \sum\limits_{k=2}^{m-1}\tau_s(h_k) & \sum\limits_{n\in\N}\tau_s(h_1^n)
\\
\sum\limits_{n\in\N}\tau_s(h_{q-1}^n) & \sum\limits_{k=m}^{q-2}\tau_s(h_k) & \sum\limits_{k=2}^{m-1}\tau_s(h_k) & \sum\limits_{n\in\N}\tau_s(h_1^n)
\\
\sum\limits_{n\in\N}\tau_s(h_{q-1}^n) & \sum\limits_{k=m}^{q-2}\tau_s(h_k) & \sum\limits_{k=2}^{m-1}\tau_s(h_k) & \sum\limits_{n\in\N}\tau_s(h_1^n)
\\
\sum\limits_{n\in\N}\tau_s(h_{q-1}^n) & \sum\limits_{k=m}^{q-2}\tau_s(h_k) & \sum\limits_{k=2}^{m-1}\tau_s(h_k) & 0
\end{pmatrix}.
\]
Moreover, as above,
\[
\begin{pmatrix}
f_1
\\
f_2
\\
\pm\tau_s(J)f_2
\\
\pm\tau_s(J)f_1
\end{pmatrix}
=
\begin{pmatrix}
 \sum\limits_{k=m}^{q-2}\tau_s(h_k)f_2 \pm \sum\limits_{k=2}^{m-1}\tau_s(h_kJ)f_2 \pm \sum\limits_{n\in\N}\tau_s(h_1^nJ)f_1
\\
\sum\limits_{n\in\N}\tau_s(h_{q-1}^n)f_1 + \sum\limits_{k=m}^{q-2}\tau_s(h_k)f_2 \pm \sum\limits_{k=2}^{m-1}\tau_s(h_kJ)f_2 \pm \sum\limits_{n\in\N}\tau_s(h_1^nJ)f_1
\\
\sum\limits_{n\in\N}\tau_s(h_{q-1}^n)f_1 + \sum\limits_{k=m}^{q-2}\tau_s(h_k)f_2 \pm \sum\limits_{k=2}^{m-1}\tau_s(h_kJ)f_2 \pm \sum\limits_{n\in\N}\tau_s(h_1^nJ)f_1
\\
\sum\limits_{n\in\N}\tau_s(h_{q-1}^n)f_1 + \sum\limits_{k=m}^{q-2}\tau_s(h_k)f_2 \pm \sum\limits_{k=2}^{m-1}\tau_s(h_kJ)f_2
\end{pmatrix}.
\]
We define
\[
 \mc L_{H,s}^{\pm} \sceq
\begin{pmatrix}
\pm\sum\limits_{n\in\N} \tau_s(h_1^nJ) & \sum\limits_{k=m}^{q-2}\tau_s(h_k) \pm \sum\limits_{k=2}^{m-1}\tau_s(h_kJ)
\\
\sum\limits_{n\in\N}\tau_s(h_{q-1}^n) \pm \sum\limits_{n\in\N}\tau_s(h_1^nJ) & \sum\limits_{k=m}^{q-2}\tau_s(h_k) \pm \sum\limits_{k=2}^{m-1}\tau_s(h_kJ) 
\end{pmatrix}.
\]

\begin{prop}\label{twistnuclear} For $\Rea s > \tfrac12$, the operators $\mc L_{H,s}^\pm$ are nuclear of order $0$ on $B(\mc E_{q-1})\times B(\mc E_r)$ and we have
\[
 \det(1-\mc L_{H,s}) = \det(1-\mc L_{H,s}^+) \cdot \det(1-\mc L_{H,s}^-).
\]
Moreover, the Fredholm determinants $\det(1-\mc L_{H,s}^+)$ and $\det(1-\mc L_{H,s}^-)$  extend to meromorphic functions on $\C$ with possible poles at $s=(1-k)/2$, $k\in\N_0$. 
\end{prop}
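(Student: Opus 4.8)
The plan is to run, for the $\mc J$-eigenspace decomposition, the same three-step machinery that produced $Z(s)=\det(1-\mc L_{H,s})$ and its meromorphic continuation: identify $\mc L_{H,s}^{\pm}$ as the two summands into which $\mc L_{H,s}$ splits, deduce nuclearity and the product formula, and then repeat the continuation arguments of Propositions~\ref{meroext} and~\ref{final} for the smaller operators. First I would record that $\mc J$ is an involution: $\tau_s(J)^2=\tau_s(J^2)=\id$ because $J^2=\id$, and $\mc J$ interchanges the index blocks attached to $E_{q-1},E_1$ and to $E_{r,a},E_{r,b}$, so $\mc J^2=\id$; together with the relation $h_kJ=Jh_{q-k}$ this gives that $\mc J$ commutes with $\mc L_{H,s}$, as already noted. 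Hence the ambient space splits as the direct sum of the $(\pm1)$-eigenspaces of $\mc J$, each $\mc L_{H,s}$-invariant, and the parametrization $(f_1,f_2)\mapsto(f_1,f_2,\pm\tau_s(J)f_2,\pm\tau_s(J)f_1)$ displayed before the proposition identifies the $(\pm1)$-eigenspace with $B(\mc E_{q-1})\times B(\mc E_r)$ — and with $\mc H(\mc E_{q-1})\times\mc H(\mc E_r)$ on the level of holomorphic functions — in such a way that $\mc L_{H,s}$ restricts to $\mc L_{H,s}^{\pm}$; the identification on the $\mc E_r$-factor rests on the relations $J.\mc E_r=\mc E_r$ and $J.\mc E_1=\mc E_{q-1}$ from Proposition~\ref{choiceposs}\eqref{choicepossii}.

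Next I would prove that $\mc L_{H,s}^{\pm}$ is nuclear of order $0$ for $\Rea s>\tfrac12$ by copying the proofs of Propositions~\ref{maps}, \ref{nuclear1} and~\ref{isnuclear}, with $\mc H(\mc E_{q-1})\times\mc H(\mc E_r)$ and $B(\mc E_{q-1})\times B(\mc E_r)$ in place of $\mc H(\mc E)$ and $B(\mc E)$. The only maps not already treated there are the composites $\tau_s(h_kJ)$ and $\tau_s(h_1^nJ)$; but $J=\textmat{-1}{0}{0}{1}$ gives $j_s(J,\cdot)\equiv1$, so $\tau_s(J)$ is simply $f\mapsto(z\mapsto f(-z))$, whence $\tau_s(h_kJ)=\tau_s(h_k)\circ\tau_s(J)$ and $\tau_s(h_1^nJ)=\tau_s(h_1^n)\circ\tau_s(J)$, and the jacobian of $\tau_s(h_1^nJ)$ equals that of $\tau_s(h_1^n)$. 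Using $J.\mc E_r=\mc E_r$ and $J.\mc E_1=\mc E_{q-1}$ one checks that the domain inclusions of Proposition~\ref{choiceposs}\eqref{choicepossiii}--\eqref{choicepossvii} carry over to these composites verbatim, so the estimates of the proof of Proposition~\ref{maps} apply unchanged and the series converge uniformly for $\Rea s>\tfrac12$. Thus $\mc L_{H,s}^{\pm}$ maps $\mc H(\mc E_{q-1})\times\mc H(\mc E_r)$ into $B(\mc E_{q-1})\times B(\mc E_r)$ and a neighbourhood of $0$ to a bounded set, hence is nuclear of order $0$ as in Proposition~\ref{nuclear1}, and a nuclear self-map of $B(\mc E_{q-1})\times B(\mc E_r)$ via the embedding as in Proposition~\ref{isnuclear}.

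With all three operators nuclear, the product formula follows from the direct-sum splitting: $\Tr\mc L_{H,s}^n=\Tr(\mc L_{H,s}^+)^n+\Tr(\mc L_{H,s}^-)^n$ for every $n\in\N$, and inserting this into $-\sum_{n}\tfrac1n(\cdot)$ and exponentiating gives $\det(1-\mc L_{H,s})=\det(1-\mc L_{H,s}^+)\det(1-\mc L_{H,s}^-)$. For the meromorphic continuation I would transport Proposition~\ref{meroext}: the operators $\mc L_{H,s}^{\pm}$ are assembled from finitely many maps $\tau_s(h_k),\tau_s(h_kJ)$ and from the families $\sum_n\tau_s(h_1^n),\sum_n\tau_s(h_{q-1}^n)$, each possibly post-composed with the $s$-independent reflection $\tau_s(J)$. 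By the strategy of \cite{Mayer_thermo} used for Proposition~\ref{meroext}, each such family extends meromorphically in $s$ to all of $\C$ with values in nuclear operators of order $0$, with at most simple poles at $s=(1-k)/2$, $k\in\N_0$, and finite-rank residues, and composition with the fixed $\tau_s(J)$ preserves this; hence $s\mapsto\mc L_{H,s}^{\pm}$ has a meromorphic extension $\wt{\mc L}_{H,s}^{\pm}$ of this type. The argument of Theorem~\ref{final} — expressing $\Tr(\wt{\mc L}_{H,s}^{\pm})^n$ through the nuclear restriction map $r=\sum_k\varphi_k\otimes e_k$ on the pair of discs and using the finite rank of the residual parts — then shows that $\det(1-\wt{\mc L}_{H,s}^{\pm})$ is meromorphic on $\C$ with poles confined to $\{(1-k)/2\mid k\in\N_0\}$.

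Everything above is routine given Propositions~\ref{choiceposs}, \ref{maps}--\ref{isnuclear} and~\ref{meroext} and Theorem~\ref{final}. The step I expect to require genuine care is the bookkeeping of the symmetry reduction in the first paragraph: pinning down the precise Banach (and holomorphic-function) spaces on which $\mc L_{H,s}^{\pm}$ acts, handling the two cases $q$ even and $q$ odd with the splitting $E_r=E_{r,a}\cup E_{r,b}$, and verifying that $\mc L_{H,s}$ really does restrict to $\mc L_{H,s}^{\pm}$ under the stated identification — together with the attendant check, in the nuclearity step, that each composite $\tau_s(h_kJ)$, $\tau_s(h_1^nJ)$ genuinely inherits the domain behaviour that the Ruelle--Mayer argument needs, for which Proposition~\ref{choiceposs}\eqref{choicepossii} is precisely the input.
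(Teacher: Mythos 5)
Your proposal is correct and follows essentially the same route as the paper: the paper realizes your eigenspace decomposition of the involution $\mc J$ concretely, by conjugating $\mc L_{H,s}$ with an explicit self-inverse operator $\mc P$ (and an auxiliary self-inverse $\mc R$) so that $\mc P\mc L_{H,s}\mc P$ becomes block diagonal with blocks $\mc L_{H,s}^+$ and $\mc R\mc L_{H,s}^-\mc R$, whence the determinant factors and nuclearity plus the continuation are inherited from Proposition~\ref{isnuclear} and Theorem~\ref{final}. Your trace-additivity argument on the $\pm1$-eigenspaces is just the coordinate-free version of that conjugation, so there is no substantive difference.
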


\begin{proof}
Let 
\[
 \mc P \sceq \frac{1}{\sqrt{2}}
\begin{pmatrix}
1 & & & \tau_s(J)
\\
 & 1 & \tau_s(J) & 
\\
 & \tau_s(J) & -1 & 
\\
\tau_s(J) & & & -1
\end{pmatrix}
\]
and
\[
 \mc R \sceq \mat{}{\tau_s(J)}{\tau_s(J)}{}.
\]
Then $\mc P$ and $\mc R$ are self-inverse and
\[
 \mc P \mc L_{H,s} \mc P = \mat{\mc L_{H,s}^+}{}{}{\mc R \mc L_{H,s}^- \mc R}.
\]
Hence
\begin{align*}
\det(1-\mc L_{H,s}) & = \det(1 - \mc P \mc L_{H,s} \mc P) = \det(1 - \mc L_{H,s}^+)\cdot \det(1- \mc R \mc L_{H,s}^- \mc R)
\\
& = \det(1-\mc L_{H,s}^+) \cdot \det(1 - \mc L_{H,s}^-).
\end{align*}
The remaining statements now follow from Proposition~\ref{isnuclear} and Theorem~\ref{final}.
\end{proof}
\par

\subsection{Comparison of the slow and the fast system}\label{sec_conj}

Because of the very geometric construction (see \cite{Pohl_Symdyn2d} and the discussion below) of the slow and fast discrete dynamical system, we expect that the following comparison statement holds.

\begin{conj}\label{conj}
Let $\Rea s = \tfrac12$. The space of $1$-eigenfunctions of the meromorphic extension of $\mc L_{H,s}^+$ is linear isomorphic to $\FE_s(\R^+)^{\dec,+}_\omega$ and hence corresponds to even Maass cusp forms. The space of $1$-eigenfunctions of the meromorphic extension of $\mc L_{H,s}^-$ is linear isomorphic to $\FE_s(\R^-)^{\dec,-}_\omega$ and hence corresponds to odd Maass cusp forms.
\end{conj}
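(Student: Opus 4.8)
The plan is to carry out, for general $q$, the second half of the Lewis--Zagier/Chang--Mayer program — the part that for $\PSL(2,\Z)$ identifies period functions with the $\pm1$-eigenfunctions of Mayer's operator — using the period functions of Section~\ref{part1} as the intermediate object. By Theorem~\ref{periodthm} together with Theorem~\ref{BLZ_main} the spaces $\FE_s(\R^+)^{\dec,\pm}_\omega$ are already in bijection with even resp.\ odd Maass cusp forms, and, in view of the commutation $\mc L_{H,s}\mc J=\mc J\mc L_{H,s}$ of Section~\ref{sec_factorization}, it suffices to construct, for $\Rea s=\tfrac12$, a linear isomorphism between $\FE_s(\R^+)^\dec_\omega$ and the $1$-eigenspace of the meromorphic extension $\wt{\mc L}_{H,s}$ in $B(\mc E)$ which intertwines $\tau_s(Q)$ with $\mc J$; passing to the $\pm$-eigenspaces on both sides then yields the two assertions (for $q=3$ this reproduces the known isomorphism, since there $\mc L_{H,s}^+$ and $-\mc L_{H,s}^-$ are Mayer's operator). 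The structural reason such an isomorphism should exist is that $(D_\st,G)$ is an acceleration of the slow system $(D_\st,F)$ on its two parabolic branches $g_1,g_{q-1}$, and $(E_\st,H)$ is $(D_\st,G)$ transported by the Moebius map $\mc T$: accelerating leaves the periodic orbits untouched, and on transfer operators it amounts to geometrically resumming the two parabolic terms of the functional equation \eqref{funceq}.

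From period functions to $\wt{\mc L}_{H,s}$-eigenfunctions: given $\psi\in\FE_s(\R^+)^\dec_\omega$ I would first note, as for $q=3$, that real-analyticity on $\R^+$ together with \eqref{funceq} forces $\psi$ to extend holomorphically past the branch points $g_k^{-1}.0$ onto a full complex neighbourhood of $[0,\infty]$. In \eqref{funceq} one then isolates the parabolic terms $\tau_s(g_1)\psi$, $\tau_s(g_{q-1})\psi$ and iterates them: on the cell $D_{\st,1,n}$ the $n$-fold iteration replaces $\tau_s(g_1)$ by $\tau_s(g_1^n)$ followed by the non-parabolic part, and likewise for $g_{q-1}$; the asymptotic expansions \eqref{ae1}--\eqref{ae2} at the cusps $0,\infty$ ensure the resulting geometric series converge. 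Transporting the three resulting pieces — over $(\lambda,\infty)$, over $(\tfrac1\lambda,\lambda)$ and over $(0,\tfrac1\lambda)$ — by $\mc T$ gives a triple $(f_1,f_r,f_{q-1})$ which, by items (iii)--(v) of Proposition~\ref{choiceposs} and the estimates in the proof of Proposition~\ref{maps}, lies in $B(\mc E)$ and is fixed by $\wt{\mc L}_{H,s}$; passing to the meromorphic extension is exactly what is needed, since $\Rea s=\tfrac12$ is the boundary of the nuclearity domain $\Rea s>\tfrac12$ and carries a possible pole at $s=\tfrac12$.

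For the inverse I would start from $f=(f_1,f_r,f_{q-1})\in B(\mc E)$ with $\wt{\mc L}_{H,s}f=f$, transport back by $\mc T^{-1}$ to a $1$-eigenfunction of $\mc L_{G,s}$ on $(0,\infty)$, and then \emph{de-accelerate}: reading the eigenfunction equation off the block-matrix form of $\mc L_{H,s}$ lets one re-expand the parabolic blocks $\sum_n\tau_s(h_1^n)$, $\sum_n\tau_s(h_{q-1}^n)$ into a single $g_1$- resp.\ $g_{q-1}$-term, and the telescoping relations $g_k.0=g_{k+1}.\infty$, $g_1.\infty=\infty$, $g_{q-1}.0=0$ glue the three components into one function $\psi$ on $\R^+$ satisfying the unaccelerated equation \eqref{funceq}. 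That the two maps are mutually inverse is, on the level of the free semigroup generated by $g_1,\dots,g_{q-1}$ (Lemma~\ref{freesg}), the statement that resummation and de-acceleration undo each other; the compatibility $\tau_s(Q)\leftrightarrow\mc J$ is immediate from $Qg_k=g_{q-k}Q$, i.e.\ $h_kJ=Jh_{q-k}$.

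The hard part is regularity. The eigenfunction $f$ is a priori only holomorphic on the three \emph{disjoint} sets $\mc E_1,\mc E_r,\mc E_{q-1}$, whereas a period function must be a single real-analytic function on all of $\R^+$ possessing the sharp expansions \eqref{ae1}--\eqref{ae2} coupled by \eqref{condae}. One must show that the reconstructed pieces agree on overlaps and patch real-analytically across $\R^+$, and — the genuine difficulty — that near $t=0$ and $t=\infty$ (equivalently near $t=\mp1$ after the $\mc T$-conjugation, precisely the points where $E_1$ and $E_{q-1}$ touch $\partial\mathbb D$ and where nuclearity failed before the acceleration) the reconstructed function carries the \emph{full} asymptotic expansion rather than a mere decay estimate. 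For $q=3$ this is the Lewis--Zagier analysis at the cusp, where \eqref{ae1}--\eqref{condae} collapse to $\psi(t)=o(\tfrac1t)$ as $t\searrow0$ and $\psi(t)=o(1)$ as $t\nearrow\infty$; for general $q$ the honest power-series expansions are needed, and deducing them from the parabolic fixed-point data of $\sum_n\tau_s(h_1^n)$ and $\sum_n\tau_s(h_{q-1}^n)$, uniformly as $s$ ranges over the critical line including near the possible pole at $s=\tfrac12$, is the crux of the conjecture.
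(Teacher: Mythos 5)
The statement you are asked to prove is labelled as a \emph{conjecture} in the paper: the authors do not prove it, and Section~\ref{sec_conj} offers only heuristic support (the geometric relationship between the slow system $(D_\st,F)$ and its acceleration, the established case $q=3$ of Efrat, Chang--Mayer and Lewis--Zagier, and two auxiliary lemmas --- that $f_r$ is determined by $f_{q-1}$ for an eigenfunction of $\mc L_{H,s}^\pm$, and that a solution of \eqref{funceqmod1}--\eqref{funceqmod4} is determined by its values on $[1,1+\lambda]$). Your proposal is an honest and well-informed roadmap that matches the paper's own heuristic discussion, but it is not a proof, and you say as much yourself in the final paragraph.

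Concretely, the unproved steps are exactly the ones that carry all the analytic content. In the forward direction: (a) the assertion that real-analyticity on $\R^+$ plus \eqref{funceq} forces $\psi$ to extend holomorphically to a full complex neighbourhood of $[0,\infty]$ is a genuine theorem for $q=3$ (a bootstrap in Lewis--Zagier) and would have to be re-proved for the $(q-1)$-term equation; (b) the geometric resummation of the parabolic terms $\tau_s(g_1^n)\psi$, $\tau_s(g_{q-1}^n)\psi$ does not converge absolutely on the critical line $\Rea s=\tfrac12$, where $\mc L_{H,s}$ is only defined through its meromorphic extension $\wt{\mc L}_{H,s}$; showing that the resummed object is a $1$-eigenfunction of $\wt{\mc L}_{H,s}$ requires a regularization/continuation argument (of Hurwitz--Lerch type, as in \cite{Mayer_thermo}, \cite{Lewis_Zagier}) that you do not supply. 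In the inverse direction, recovering the \emph{full} asymptotic expansions \eqref{ae1}--\eqref{condae}, with the coupling $C_m^*=(-1)^{m+1}D_m^*$, from a mere $B(\mc E)$-eigenfunction of $\wt{\mc L}_{H,s}$ near the points where $\overline{\mc E_1}$ and $\overline{\mc E_{q-1}}$ meet $\pm1$ is, as you correctly identify, the crux --- and for general (non-arithmetic) $q$ one cannot fall back on the $o(1/t)$, $o(1)$ simplification available for the modular group. Finally, that the two constructions are mutually inverse is asserted via the free-semigroup picture of Lemma~\ref{freesg} but not verified. Since each of these points is precisely where the known $q=3$ proof does serious work, the proposal cannot be accepted as a proof of the statement; it is a plausible programme for attacking what the paper itself leaves open.
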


A couple of arguments support this conjecture: The functional equation in the definition of the space $\FE_s(\R^+)^{\dec}_\omega$ is the defining equation for the $1$-eigenfunctions of the transfer operator with parameter $s$ arising from a discretization of the geodesic flow on $G_q\backslash \h$. The transfer operator $\mc L_{H,s}$ is also associated to a discretization of this geodesic flow. The latter discretization is strongly related to the first discretization, it is a specific acceleration of the first discretization. Heuristically, both discretizations and both transfer operator families should contain the same geometric information. For $\Rea s = \tfrac12$, the Selberg trace formula implies (via Theorem~\ref{final}) that $\mc L_{H,s}$ has a $1$-eigenfunction (more precisely, its meromorphic continuation $\wt{\mc L}_{H,s}$ has a $1$-eigenfunction) if and only if $\FE_s(\R^+)^{\dec}_\omega$ is non-trivial. 

For the splitting of the space $\FE_s(\R^+)^{\dec}_\omega$ into the subspaces $\FE_s(\R^+)^{\dec,\pm}_\omega$ we used the symmetry $\tau_s(Q)$. This splitting is the same as building two transfer operators $\mc L_{F,s}^\pm$ from the slow transfer operator $\mc L_{F,s}$ along the lines of the construction above. The $1$-eigenfunctions of $\mc L_{F,s}^\pm$ are then characterized by the functional equation in $\FE_s(\R^+)^{\dec,\pm}_\omega$. The same symmetry (after the necessary conjugation with $\mc T$) is used to decompose $\mc L_{H,s}$ into the operators $\mc L_{H,s}^\pm$. Invoking the idea that the essential geometric properties are captured by both discretizations, the $1$-eigenfunctions of $\mc L_{H,s}^\pm$ (more precisely, of $\wt{\mc L}_{H,s}^\pm$) should correspond to the $1$-eigenfunctions of $\mc L_{F,s}^\pm$, resp., of appropriate regularity.

For the case $q=3$, the transfer operators $\mc L_{H,s}^\pm$ are $\pm \sum_{n\in\N} \tau_s(h_1^nJ)$, which is  Mayer's transfer operator \cite{Mayer_thermoPSL} (up to the conjugation by $\mc T$). In this case, Conjecture~\ref{conj} has been established by Efrat \cite{Efrat_spectral} on the level of parameters $s$ for the transfer operators and eigenvalues $s(1-s)$ for Maass cusp forms, and by Chang and Mayer \cite{Chang_Mayer_transop} as well as by Lewis and Zagier \cite{Lewis_Zagier} on the level of eigenfunctions and Maass cusp forms.

To end, we prove two lemmas which might be helpful in proving Conjecture~\ref{conj}.

\begin{lemma}
If $f=(f_{q-1},f_r)$ is a $1$-eigenfunction of $\mc L_{H,s}^+$ or $\mc L_{H,s}^-$, then $f_r$ is determined by $f_{q-1}$.
\end{lemma}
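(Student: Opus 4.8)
The plan is to exploit the special shape of the matrices $\mc L_{H,s}^\pm$ displayed above: in both the even and the odd case the $(1,2)$- and $(2,2)$-entries are the \emph{same} operator, which I denote $B^\pm$. Writing also $A^\pm$ for the $(1,1)$-entry and $C^\pm$ for the $(2,1)$-entry, so that $A^\pm = \pm\sum_{n\in\N}\tau_s(h_1^nJ)$ and $C^\pm = \sum_{n\in\N}\tau_s(h_{q-1}^n)\pm\sum_{n\in\N}\tau_s(h_1^nJ)$ in both parities, the eigenfunction equation $\mc L_{H,s}^\pm f = f$ for $f=(f_{q-1},f_r)$ becomes the pair of equations
\[
 f_{q-1} = A^\pm f_{q-1} + B^\pm f_r, \qquad f_r = C^\pm f_{q-1} + B^\pm f_r .
\]

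The key step is simply to subtract the two equations. The common summand $B^\pm f_r$ drops out, and in $A^\pm - C^\pm$ the contributions $\pm\sum_{n\in\N}\tau_s(h_1^nJ)$ cancel as well, so that — independently of the parity of $q$ and of the chosen sign —
\[
 f_{q-1} - f_r = (A^\pm - C^\pm) f_{q-1} = -\sum_{n\in\N}\tau_s(h_{q-1}^n)\, f_{q-1},
\]
and hence
\[
 f_r = \Big(1 + \sum_{n\in\N}\tau_s(h_{q-1}^n)\Big) f_{q-1}.
\]
This is an explicit formula recovering $f_r$ from $f_{q-1}$, which is exactly the assertion.

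The only delicate point — and the main, rather mild, obstacle — is the bookkeeping of domains: $f_{q-1}\in B(\mc E_{q-1})$ and $f_r\in B(\mc E_r)$, and the two displayed equations are a priori identities in $B(\mc E_{q-1})$ \resp $B(\mc E_r)$, so the subtraction is literally justified only on the overlap $\mc E_{q-1}\cap\mc E_r$, which is a nonempty open set by Proposition~\ref{choiceposs} (it contains the common endpoint $-\tfrac{\lambda-1}{\lambda+1}$ of $\overline{E_{q-1}}$ and $\overline{E_r}$). There $\sum_{n\in\N}\tau_s(h_{q-1}^n)f_{q-1}$ is well defined because $h_{q-1}^{-n}.\overline{\mc E_r}\subseteq\mc E_{q-1}$, and the displayed formula for $f_r$ holds; since $\mc E_r$ is connected, $f_r$ is then uniquely determined on all of $\mc E_r$ by the identity theorem for holomorphic functions. (Alternatively, one first uses the inclusions $h_1^{-n}.\overline{\mc E_r}\subseteq\mc E_1$, $J.\mc E_1=\mc E_{q-1}$ and $h_k^{-1}.\overline{\mc E_r}\subseteq\mc E_r$ from Proposition~\ref{choiceposs} to see that the first of the two equations extends $f_{q-1}$ holomorphically to $\mc E_{q-1}\cup\mc E_r$, after which the formula for $f_r$ is valid verbatim on all of $\mc E_r$.) The same computation applies unchanged to the meromorphic continuations $\wt{\mc L}_{H,s}^\pm$, since only the algebraic structure of the matrix and the operator identity $A^\pm - C^\pm = -\sum_{n\in\N}\tau_s(h_{q-1}^n)$ are used.
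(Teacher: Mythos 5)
Your proposal is correct and follows essentially the same route as the paper: subtract the two components of the eigenfunction equation so that the common $f_r$-terms and the $\sum_n\tau_s(h_1^nJ)f_{q-1}$-terms cancel, obtain $f_r=\sum_{n\in\N_0}\tau_s(h_{q-1}^n)f_{q-1}$ on the nonempty open overlap $\mc E_{q-1}\cap\mc E_r$, and conclude by the identity theorem for holomorphic functions on the connected set $\mc E_r$. Your additional care about where each identity holds matches the paper's (briefer) treatment of the same point.
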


\begin{proof}
W.l.o.g.\@ suppose that $q$ is odd and $f$ is a $1$-eigenfunction of $\mc L_{H,s}$. Then 
\begin{align*}
f_{q-1} & = \sum_{n\in\N} \tau_s(h_1^nJ)f_{q-1} + \sum_{k=m}^{q-2}\tau_s(h_k)f_r + \sum_{k=2}^{m-1}\tau_s(h_kJ)f_r
\\
f_r & = \sum_{n\in\N} \tau_s(h_{q-1}^n) f_{q-1} + \sum_{n\in\N} \tau_s(h_1^nJ)f_{q-1} + \sum_{k=m}^{q-2}\tau_s(h_k)f_r + \sum_{k=2}^{m-1}\tau_s(h_kJ)f_r.
\end{align*}
On $\mc E_{q-1}\cap \mc E_r$, which is an open non-empty set, we have
\[
 f_r = \sum_{n\in\N_0} \tau_s(h_{q-1}^n) f_{q-1}.
\]
Since $f_r$ is holomorphic, it is determined by its values on $\mc E_{q-1}\cap \mc E_r$.
\end{proof}

The solutions of the functional equations \eqref{funceqmod1}-\eqref{funceqmod4} are determined uniquely by their values on a ``fundamental
domain'', as given in the following lemma. As in the corresponding discussion in \cite{Lewis_Zagier}
it is not clear how to recognize the functions on the fundamental domain that extend
to functions on $\mathbb R^+$ with sufficient regularity to lie in $\FE_s(\R^+)^{\dec,\pm}_\omega$.
\par
\begin{lemma}
Let $\psi$ be a solution to one of the functional equations \eqref{funceqmod1}-\eqref{funceqmod4}. Then $\psi$ is determined by its values on the interval $[1,1+\lambda]$.
\end{lemma}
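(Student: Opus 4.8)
The plan is to first \emph{unfold} the four symmetrised equations \eqref{funceqmod1}--\eqref{funceqmod4} back into the full functional equation \eqref{funceq} together with the reflection symmetry $\tau_s(Q)\psi=\varepsilon\psi$, and then to run a Lewis--Zagier-style fundamental-domain propagation. Throughout let $\varepsilon=+1$ for \eqref{funceqmod1}, \eqref{funceqmod3} and $\varepsilon=-1$ for \eqref{funceqmod2}, \eqref{funceqmod4}.

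\textbf{Step 1 (reduction).} I would show that a solution $\psi\colon\R^+\to\C$ of one of \eqref{funceqmod1}--\eqref{funceqmod4} satisfies \eqref{funceq} on $\R^+$ and $\tau_s(Q)\psi=\varepsilon\psi$. From $Qg_k=g_{q-k}Q$, the anti-homomorphism property of $\tau_s$, and $\tau_s(Q)^2=\id$, one gets $\tau_s(Q)\tau_s(g_k)=\tau_s(g_{q-k})\tau_s(Q)$ and $\tau_s(Q)\tau_s(Qg_k)=\tau_s(g_{q-k})$. Applying $\tau_s(Q)$ to the mod equation and re-indexing $k\mapsto q-k$ yields a companion identity whose sum with the original says that $\psi+\varepsilon\tau_s(Q)\psi$ satisfies \eqref{funceq} (for even $q$ the two halves of the $\tfrac12\tau_s(g_m)$-term combine to the single $\tau_s(g_m)$-term of \eqref{funceq}). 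For the symmetry itself I would substitute $t\mapsto1/t$ in the mod equation: the set of Moebius maps occurring as arguments is invariant under right composition with $Q$ via the involution $g_k\leftrightarrow Qg_k$ (with $g_m\leftrightarrow Qg_m$ respecting the half weight when $q$ is even), and the cocycle relation $j_s(ab,t)=j_s(a,b.t)\,j_s(b,t)$ gathers every Jacobian factor into the single factor $j_s(Q,1/t)=|t|^{2s}$; this gives $\psi(1/t)=\varepsilon\,|t|^{2s}\psi(t)$, i.e.\ $\tau_s(Q)\psi=\varepsilon\psi$. Combining the two, $\psi+\varepsilon\tau_s(Q)\psi=2\psi$ satisfies \eqref{funceq}, hence so does $\psi$. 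The whole step is algebraic and pointwise, so no regularity of $\psi$ is needed.

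\textbf{Step 2 (propagation).} From $\tau_s(Q)\psi=\varepsilon\psi$ we get $\psi(t)=\varepsilon\,t^{-2s}\psi(1/t)$, so $\psi|_{(0,1)}$ is determined by $\psi|_{(1,\infty)}$, and it suffices to show $\psi|_{[1,1+\lambda]}$ determines $\psi|_{(1,\infty)}$. Since $g_1=\textmat{1}{-\lambda}{0}{1}$ we have $\tau_s(g_1)\psi(t)=\psi(t+\lambda)$, so \eqref{funceq} rearranges to
\[
 \psi(t+\lambda)=\psi(t)-\sum_{k=2}^{q-1}\tau_s(g_k)\psi(t).
\]
Each $g_k^{-1}$ ($2\le k\le q-1$) maps $\R^+$ bijectively onto $I_k:=(g_k^{-1}.0,\,g_k^{-1}.\infty)$, where $I_k\subseteq(\tfrac1\lambda,\lambda)$ for $2\le k\le q-2$, $I_{q-1}=(0,\tfrac1\lambda)$, and $g_{q-1}^{-1}.t=\tfrac{t}{\lambda t+1}$. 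I would then induct on $n\ge1$ on the claim that $\psi|_{[1,1+n\lambda]}$ is determined by $\psi|_{[1,1+\lambda]}$: for $t_0\in(1+n\lambda,\,1+(n+1)\lambda]$ the displayed relation (with $t=t_0-\lambda$) expresses $\psi(t_0)$ through $\psi(t_0-\lambda)$, where $t_0-\lambda\in[1,1+n\lambda]$ is determined by the inductive hypothesis, and through the values $\psi\bigl(g_k^{-1}.(t_0-\lambda)\bigr)$, $2\le k\le q-1$. Here $t_0-\lambda>1$, so for $2\le k\le q-2$ the point $g_k^{-1}.(t_0-\lambda)\in(\tfrac1\lambda,\lambda)$ either already lies in $[1,1+\lambda]$ or has reciprocal in $(1,\lambda)\subseteq[1,1+\lambda]$, and for $k=q-1$ the point $g_{q-1}^{-1}.(t_0-\lambda)\in(0,\tfrac1\lambda)$ has reciprocal $\lambda+\tfrac1{t_0-\lambda}\in(\lambda,\lambda+1)\subseteq[1,1+\lambda]$. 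In every case the value is read off from $\psi|_{[1,1+\lambda]}$, using $\psi(x)=\varepsilon\,x^{-2s}\psi(1/x)$ for the arguments below $1$. This determines $\psi(t_0)$, completing the induction; letting $n\to\infty$ gives $\psi|_{[1,\infty)}$, and the reflection then gives $\psi|_{(0,1)}$, so $\psi$ is determined on all of $\R^+$.

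\textbf{Main obstacle.} The only delicate part is Step 1: recovering \emph{both} \eqref{funceq} and the reflection $\tau_s(Q)\psi=\varepsilon\psi$ from the folded equations requires careful bookkeeping with $Qg_k=g_{q-k}Q$, the $\tau_s$-anti-homomorphism, and the $j_s$-cocycle relation, together with routine but separate attention to the half-weighted $g_m$-term for even $q$. Once this reduction is available, Step 2 is the transparent Hecke-group analogue of the Lewis--Zagier fundamental-domain argument, powered solely by the shift $t\mapsto t+\lambda$ built into $g_1$, the parabolic $g_{q-1}$ fixing $0$, and the reflection $t\mapsto1/t$.
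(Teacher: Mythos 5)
Your proof is correct and follows essentially the same route as the paper's: both first extract the reflection symmetry $\tau_s(Q)\psi=\varepsilon\psi$ from the folded equation, reduce via that symmetry to determining $\psi$ on $[1,\infty)$, and then propagate along the translation $t\mapsto t+\lambda$ coming from $g_1$, using that all remaining arguments $g_k^{-1}.t$ land in $[1,1+\lambda]$ after at most one application of $t\mapsto 1/t$. The only cosmetic difference is that you fully unfold to \eqref{funceq} and reflect the arguments that fall below $1$ back by hand, whereas the paper's iteration formula keeps the $Q$-terms folded so that every argument automatically stays in $[1,\infty)$.
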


\begin{proof}
We suppose that $q$ is odd and $\psi$ satisfies \eqref{funceqmod3} (all the other cases are analogous). Then $\psi=\tau_s(Q)\psi$. Hence it suffices to show that $\psi$ is determined on $[1,\infty)$. The functional equation \eqref{funceqmod3} applied to $x-\lambda$ gives
\begin{align}\label{iteration}
\psi(x)  & = \psi(x-\lambda) - j_s(g_1^{-1}Q, x-\lambda) \psi\left( \frac{1}{x-\lambda} + \lambda\right) 
\\
& \qquad\qquad - \sum_{k=2}^{m-1} \tau_s(Qg_k)\psi(x-\lambda) - \sum_{k=2}^{m-1}\tau_s(g_k)\psi(x-\lambda).\nonumber
\end{align}
Now
\[
 [1,1+\lambda] \supseteq \bigcup_{k=2}^{m-1}g_k^{-1}.\R^+
\]
and
\[
 \frac{1}{x-\lambda} + \lambda \in [1,1+n\lambda]
\]
for $x\in [1+n\lambda, 1+(n+1)\lambda]$ and $n\in\N$. Thus, iteratively applying \eqref{iteration} to the intervals $[1+\lambda, 1+2\lambda]$, $[1+2\lambda, 1+3\lambda]$, $\ldots$, determines $\psi$ on $[1,\infty)$.
\end{proof}


\providecommand{\bysame}{\leavevmode\hbox to3em{\hrulefill}\thinspace}
\providecommand{\MR}{\relax\ifhmode\unskip\space\fi MR }
\providecommand{\MRhref}[2]{%
  \href{http://www.ams.org/mathscinet-getitem?mr=#1}{#2}
}
\providecommand{\href}[2]{#2}

\end{document}